\documentclass[11pt,a4paper,reqno]{amsart}

%\begin{equation}
%{\footnotesize
% ... formulas ... 

%}
%}

%\end{equation}

% achicar size en formula

\usepackage{amsmath,amsfonts,amssymb,amsthm}
\usepackage{enumitem}
\usepackage[all]{xy}
\usepackage{graphicx}
\usepackage{aliascnt}
\usepackage{hyperref}
\usepackage{geometry}
\usepackage{verbatim}
\usepackage{xcolor}
\hypersetup{
    colorlinks,
    linkcolor={blue!60!black},
    citecolor={green!80!black},
    urlcolor={blue!80!black}
}
\swapnumbers

\hypersetup{backref=true}
 
%\journal{arXiv}

\theoremstyle{plain}
\newtheorem{theorem}{Theorem}
\newtheorem{proposition}[theorem]{Proposition}

\newtheorem{lemma}[theorem]{Lemma}
\newtheorem{corollary}[theorem]{Corollary}

\theoremstyle{definition}
\newtheorem{definition}[theorem]{Definition}

\newtheorem{example}[theorem]{Example}
\newtheorem{remark}[theorem]{Remark}

\def\Z{\mathbb{Z}}
\def\C{\mathbb{C}}
\def\N{\mathbb{N}}

\def\PP{\mathbb{P}}

\def\II{\mathrm{I}}
\def\JJ{\mathrm{J}}
\def\BB{\mathcal{B}}
\def\OO{\mathcal{O}}
\def\HH{\mathcal{H}}
\def\KK{\mathrm{K}}
\def\XX{\mathcal{X}}
\def\F1{\mathcal{F}}
\def\I1{\mathcal{I}}
\def\EE{\mathcal{E}}

\def\ker{\operatorname{ker}}
\def\im{\operatorname{im}}
\def\rank{\operatorname{rank}}
\def\dim{\operatorname{dim}}

\def\rank{\operatorname{rank}}
\def\cod{\operatorname{codim}}

\def\depth{\operatorname{depth}}

\def\Hom{{\operatorname {Hom}}}
\def\Ext{{\operatorname {Ext}}}
\def\Ex{{\operatorname {Ex}}}
\def\Tor{{\operatorname {Tor}}}

\def\Hilb{{\operatorname {Hilb}}}
\def\Quot{{\operatorname {Quot}}}
\def\min{{\operatorname {min}}}

\def\im{\text{im}\,}

\numberwithin{equation}{section}

\setcounter{tocdepth}{2}
% show in table of contents

\begin{document}

\title{Deformations of exterior differential ideals and applications.}

\author {Fernando Cukierman}
\author {C\'esar Massri}

\begin{abstract}
We develop some basic facts on deformations of exterior differential ideals on a smooth complex algebraic variety. 
With these tools we study deformations of several types of differential ideals, leading to several irreducible components of the corresponding moduli spaces.\end{abstract}

%\begin{keyword} Deformation theory\sep Differential ideals\sep Obstructions\sep Foliations\sep Twisted differential ideals \MSC[2010] 14xxx\sep 14xxx \end{keyword}

\subjclass[2010]{14Mxx, 37F75, 32S65, 32G13.}

\maketitle

\tableofcontents

\newpage

\section*{Introduction.}

\

Let $X$ be a smooth algebraic variety over the complex numbers. A sheaf of differential ideals $\II\subseteq\Omega_X$ is a 
graded sheaf of coherent ideals of the exterior algebra $\Omega_X$ such that $d\II\subseteq \II$. We  shall study infinitesimal
deformations of $\II$ as a differential graded ideal of $\Omega_X$. This will follow the well developed
deformation theory; see for example \cite{MR2583634}, \cite{sernesi2007deformations}, \cite{MR2222646}, \cite{toen2017problemes}. 

\

One point  to take into account is the slight non-commutativity of $\Omega_X$. There exists a general theory of Hilbert schemes over
non-commutative rings in \cite{MR1863391} which amply covers the case of an anti-commutative sheaf of rings like $\Omega_X$.
Another less trivial point relevant for us is the condition on the ideals of being
stable under exterior derivative, that is, the integrability condition.  Our results will be formally the same as the known tangent-obstruction theory for Hilbert schemes (see e. g. \cite{MR2222646}, \cite{MR1863391}), but working in the Herrera-Lieberman category of differential complexes \cite{MR0310287}, \cite{MR1181207}, which takes  the integrability condition into account.

\

Of special relevance will be the complex $\Omega_X/\II$ which may be thought of as the De Rham complex of the super-commutative subscheme defined by the ideal
$\II \subset \Omega_X$. This or similar complexes appeared before, for instance, in \cite{gomez1988transverse} as the \emph{leaf complex}, and in \cite{bryant1995characteristic} in connection with the \emph{characteristic cohomology}.

\

With these tools at our disposal we are able to study the deformation theory of differential ideals of various special types. This allows the construction of new irreducible components of moduli spaces of differential ideals, and of singular foliations.

\

Let us summarize the contents of the different sections.  In Section 1 we collect several basic definitions and preliminary facts on differential graded modules over $\Omega_X$. In Section 2 we work out the basic theory of infinitesimal deformations and obstructions for differential graded ideals. In Section 3 we develop an alternative set-up for working with differential graded ideals on projective varieties, generalizing the Serre correspondence between coherent sheaves and graded modules. 
In Sections 4, 5 and 6 we review several known facts and we give some new technical calculations relevant to the geometry of moduli spaces of exterior differential ideals. In Section 5 we prove Theorem \ref{ext=0} on the vanishing of $\mathcal{E}xt$ for coherent sheaves, which seems to be new.
Section 7 contains our first application of the previous theory, Theorem (A)  \ref{theorem pfaff},  which gives sufficient conditions for the stability of some types of singular Pfaff ideals.  More precisely, we consider singular Pfaff ideals $\II = \langle \omega_1, \dots, \omega_q\rangle$ generated by twisted one-forms $\omega_i$ that vary in respective irreducible components $\mathcal{F}_i$ of the space of integrable one-forms. More generally, we study the stability of singular Pfaff ideals of the form $\II = \II_1 + \dots + \II_q$ where each $\II_i$ is a singular Pfaff ideal, general element of a component $\mathcal{F}_i$ of the space of singular Pfaff ideals. 

\

Theorem (A)  \ref{theorem pfaff}  is of a general nature, and in order to apply it in examples one needs to verify the surjectivity stated in hypothesis b). This surjectivity follows if the two obstruction spaces defined in (7.2) are zero. In (7.3) and (7.4) we prove that  these two spaces are actually zero, under certain more manageable hypothesis. The result obtained is stated in Theorem (B)  \ref{theorem pfaff2}. In Section 8 we discuss several  applications.

\newpage

\section{Preliminaries.} \label{preliminaries}

\
 
\subsection{Exterior differential modules.} 

\

\noindent
Let $X$ be an $n$-dimensional smooth algebraic variety over $\C$ and let $\Omega^1_X$ be
the sheaf of K\"ahler differentials. Let $\Omega_X$ be the De Rham complex of
sheaves 
\[
\Omega_X\colon\quad\OO_X\rightarrow \Omega^1_X\rightarrow\ldots\to \Omega^n_X,
\]
where the map is the exterior derivative $d$ and $\Omega^i_X=\bigwedge^i\Omega^1_X$.
In this section we recall some results about
the category of differential graded modules over $\Omega_X$. We refer to
\cite{MR0310287,MR2320462,MR3386225} and especially \cite{MR1181207} for the notation that we adopt. 
For general definitions and results
on differential graded algebras, see for example \cite{MR1258406}.

\

\noindent
Let $\Omega_X$-dgmod be the category 
of left $DG$-Modules over $\Omega_X$,
where each component of a module is 
coherent over $\OO_X$ and only a finite number
of components are non-zero, see \cite{MR1181207}.
Specifically, an object in $\Omega_X$-dgmod
${F}=\oplus_{i\in\mathbb{Z}}{F}^i$
is a collection of coherent $\OO_X$-Modules 
${F}^i$, $i\in\Z$, with a graded structure 
\[
\cdot:\Omega^k_X\otimes_{\OO_X}{F}^{i}\to {F}^{i+k}
\]
and an additive endomorphism $d:{F}\to{F}$ of degree $1$
such that $d^2=0$ and
\[
d(am)=da\cdot m+(-1)^{k}a\cdot dm,\quad
\forall a\in\Omega^k_X,\,m\in {F}. 
\]
A homomorphism 
$f:{F}\rightarrow{G}$ 
in $\Omega_X$-dgmod is a collection of $\OO_X$-linear
maps $f_i:{F}^i\to{G}^i$ such that 
$f_{i+1}\circ d_{F}^i=d_{G}^i \circ f_i$.
The  sheaf (of $\mathbb{C}$-vector spaces)
of maps in $\Omega_X$-dgmod is 
denoted
\[
\mathcal{H}om_{\Omega_X}({F},{G}).
\]

\noindent
It is proved in \cite[\S 2]{MR0310287} that the category $\Omega_X$-dgmod
is an abelian category 
which satisfies axioms AB5 and AB3* of \cite{MR0102537} and it has enough injectives.

\

\noindent
The following definitions are standard in the context of 
differential graded algebras, \cite{MR1258406}. 
Let $\Omega_X^{\sharp}$-mod be the category of 
coherent graded modules over $\Omega_X$ (without differentials).
Any $\Omega_X^{\sharp}$-Module ${F}$ has a 
grading ${F}=\oplus_{i\in\mathbb{Z}}{F}^i$ such that
\[
\wedge:\Omega_X^{i}\otimes_{\OO_X}{F}^j\to{F}^{i+j}.
\]
Given that ${F}$ is coherent, we have ${F}^{i}=0$ for $|i|>>0$.

\

\noindent
Let us define
$\mathcal{H}omgr_{\Omega_X^{\sharp}}({F},{G})$,
the graded sheaf of $\Omega_X^{\sharp}$-homomorphisms 
between two $\Omega_X^{\sharp}$-Modules ${F}$ and 
${G}$, as
\begin{equation}
\mathcal{H}omgr_{\Omega_X^{\sharp}}({F},{G}):=
\bigoplus_{k\in\mathbb{Z}}
\mathcal{H}om_{\Omega_X^{\sharp}}({F},{G}[k]), \label{homgr}
\end{equation}
where ${G}[k]^j={G}^{j+k}$, $d_{{G}[k]}=(-1)^kd_{{G}}$  and $\mathcal{H}om_{\Omega_X^{\sharp}}({F},{G}[k])$
is the sheaf of 
$\Omega_X^{\sharp}$-linear maps ${F}\to{G}[k]$.  Any object in $\Omega_X$-dgmod is called an $\Omega_X^\sharp$-Module.

\

\noindent
If ${F}$ and ${G}$ are differential graded $\Omega_X$-Modules, one gives
to  $\mathcal{H}omgr_{\Omega_X^{\sharp}}({F},{G})$
a differential $d$ making it a differential graded $\Omega_X$-Module,
\begin{equation}
df:=d_{{G}} \circ f-(-1)^k f \circ d_{{F}},\quad
f\in 
\mathcal{H}om_{\Omega_X^{\sharp}}({F},{G}[k])   \label{differential}
\end{equation}
In this case we may depict the complex $(\mathcal{H}omgr_{\Omega_X^{\sharp}}({F},{G}), d)$ in the usual way:
\begin{equation}
\dots \to \mathcal{H}om_{\Omega_X^{\sharp}}({F},{G}[k-1]) \to  \mathcal{H}om_{\Omega_X^{\sharp}}({F},{G}[k]) \to  \mathcal{H}om_{\Omega_X^{\sharp}}({F},{G}[k+1]) \to \dots, \label{homgr2}
\end{equation}
We denote 
\begin{equation}
B(\mathcal{H}omgr_{\Omega_X^{\sharp}}({F},{G}),d), \
Z(\mathcal{H}omgr_{\Omega_X^{\sharp}}({F},{G}),d), \
H(\mathcal{H}omgr_{\Omega_X^{\sharp}}({F},{G}),d), \label{boundaries}
\end{equation}
the boundaries (image of $d$), the cicles (kernel of $d$), and the homology $H = Z/B$ of the
differential complex $(\mathcal{H}omgr_{\Omega_X^{\sharp}}({F},{G}),d)$. 
These are $\Omega_X^\sharp$-Modules. Notice that 
\begin{equation}
Z^0(\mathcal{H}omgr_{\Omega_X^{\sharp}}({F},{G}),d)=
\mathcal{H}om_{\Omega_X}({F},{G}).  \label{Z^0}
\end{equation}
In particular, we have an exact sequence of sheaves of complex vector spaces
\begin{equation}
\xymatrix{
0 \ar[r] &\mathcal{H}om_{\Omega_X}({F},{G})  \ar[r] &\mathcal{H}om_{\Omega_X^{\sharp}}({F},{G})  \ar[r]^d &\mathcal{H}om_{\Omega_X^{\sharp}}({F},{G[1]})
}
\label{cicles}
\end{equation}
where $d(f) = [d, f]$ as in \ref{differential}.
This exact sequence is functorial in $F$ and in $G$.

\

\noindent
The tensor product in $\Omega_X$-dgmod is defined as
\[
{F}\otimes_{\Omega_X} {G}:=
({F}\otimes_{\Omega_X^{\sharp}} {G},d),
\quad
d(f\otimes g):=df\otimes g+(-1)^{\overline{f}}f\otimes dg,
\]

\begin{remark}
In \cite{MR0310287} the authors define
the Hyperext, denoted $\underline{\underline{\mathcal{E}xt}}$,
as the right derived functors of
$(\mathcal{H}omgr_{\Omega_X^{\sharp}}(-,-),d)$.
Given that $\Omega_X$-dgmod is an abelian category, 
it is also possible to 
define right derived functors of
$\mathcal{H}om_{\Omega_X}(-,-)$.
Notice that these functors are different,
\[
\underline{\underline{\mathcal{E}xt}}^0(\Omega_X,{G})= H^0(G,d),
\quad
\mathcal{E}xt_{\Omega_X}^0(\Omega_X,G)=
Z^0(G,d).
\]
See \cite[3.3]{MR0310287} for the first equality. 
\qed
\end{remark}

\

\begin{proposition}\label{local-global}
Let ${F},{G}$ be differential graded $\Omega_X$-Modules such that 

\noindent
$\mathcal{H}om_{\Omega_X^{\sharp}}({F},{G}[k])=0$ for $k < 0$.
Then, there exists an exact sequence
of $\C$-vector spaces,
\[
0\to 
H^1(X,\mathcal{H}om_{\Omega_X}({F},{G}))\to
\Ext_{\Omega_X}^{1}({F},{G})\to
H^0(X,\mathcal{E}xt_{\Omega_X}^{1}({F},{G}))\to
\]
\[
H^2(X,\mathcal{H}om_{\Omega_X}({F},{G}))\to
\Ext^{2}_{\Omega_X}({F},{G}).
\]
\end{proposition}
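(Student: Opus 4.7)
The statement is the five-term exact sequence of low degrees associated with a local-to-global spectral sequence for dg-Ext in $\Omega_X$-dgmod. Accordingly, the plan is to build this spectral sequence via Grothendieck's composition-of-derived-functors theorem and read off its first five terms.

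Since $\Omega_X$-dgmod has enough injectives (recalled in the excerpt from \cite{MR0310287}), the left-exact functor $\mathcal{H}om_{\Omega_X}(F,-)$ from $\Omega_X$-dgmod to sheaves of $\C$-vector spaces on $X$ admits right-derived functors, namely the local dg-Ext sheaves $\mathcal{E}xt^q_{\Omega_X}(F,-)$. Its composition with the global sections functor $\Gamma(X,-)$ equals $\Hom_{\Omega_X}(F,-)$, whose right-derived functors are $\Ext^\bullet_{\Omega_X}(F,-)$. Provided one shows that $\mathcal{H}om_{\Omega_X}(F, I)$ is $\Gamma$-acyclic for every injective $I$ in $\Omega_X$-dgmod, Grothendieck's theorem yields the spectral sequence
$$E_2^{p,q} = H^p\bigl(X, \mathcal{E}xt^q_{\Omega_X}(F, G)\bigr) \Longrightarrow \Ext^{p+q}_{\Omega_X}(F, G),$$
and its standard five-term low-degree exact sequence (using $\mathcal{E}xt^0 = \mathcal{H}om$) is precisely the asserted one.

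The main technical obstacle is the $\Gamma$-acyclicity of $\mathcal{H}om_{\Omega_X}(F, I)$ for injective $I$. My approach would be to combine the functorial exact sequence \eqref{cicles} with structural properties of injectives in $\Omega_X$-dgmod: the graded-Hom sheaves $\mathcal{H}om_{\Omega_X^\sharp}(F, I[k])$ should be flasque, hence $\Gamma$-acyclic, when $I$ is injective, which ought to follow from the construction of injectives as in \cite{MR0310287}. The hypothesis $\mathcal{H}om_{\Omega_X^\sharp}(F, G[k])=0$ for $k<0$ enters here by ensuring that the complex \eqref{homgr2} begins in degree zero, so that $\mathcal{H}om_{\Omega_X}(F,-)$ is identified with $Z^0 = H^0$ of this complex without intrusion from negative-degree terms; extending \eqref{cicles} into a short exact sequence with $\operatorname{im}(d)$ and chasing the long exact cohomology sequence then produces $\Gamma$-acyclicity of the kernel $\mathcal{H}om_{\Omega_X}(F, I)$. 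Once this step is in place, the rest is purely formal homological algebra.
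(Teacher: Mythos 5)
Your global strategy coincides with the paper's: both apply the Grothendieck spectral sequence (\cite[Thm.\ 2.4.1]{MR0102537}) to the composition $\Hom_{\Omega_X}({F},-)=\Gamma(X,-)\circ\mathcal{H}om_{\Omega_X}({F},-)$ and read off the five-term low-degree exact sequence, and both correctly identify the crux as showing that $\mathcal{H}om_{\Omega_X}({F},{J})$ is $\Gamma$-acyclic for ${J}$ injective in $\Omega_X$-dgmod. But your proposed proof of that acyclicity does not go through as sketched, for two concrete reasons. First, you invoke the hypothesis $\mathcal{H}om_{\Omega_X^{\sharp}}({F},{G}[k])=0$ for $k<0$ to argue that the complex \eqref{homgr2} with values in the injective begins in degree zero; but the hypothesis concerns ${G}$, not an injective ${I}$ containing it, and there is no reason for the vanishing to persist after embedding ${G}$ in ${I}$ — so the identification $Z^0=H^0$ fails precisely where you need it. (Tellingly, the paper's own proof never uses this hypothesis at all.) Second, even granting flasqueness of each $\mathcal{H}om_{\Omega_X^{\sharp}}({F},{I}[k])$, your cohomology chase does not close: from the short exact sequence $0\to\mathcal{H}om_{\Omega_X}({F},{I})\to\mathcal{H}om_{\Omega_X^{\sharp}}({F},{I})\xrightarrow{d}\operatorname{im}(d)\to 0$ one only gets $H^i(X,\ker d)\cong H^{i-1}(X,\operatorname{im}(d))$ for $i\ge 2$ and $H^1(X,\ker d)=\operatorname{coker}\bigl(\Gamma\text{-sections of }d\bigr)$, so acyclicity of the kernel requires acyclicity of $\operatorname{im}(d)$ — a mere subsheaf of a flasque sheaf, not flasque in general — together with surjectivity on global sections, and your sketch establishes neither. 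Kernels of morphisms between flasque (or acyclic) sheaves are simply not acyclic in general.

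The missing idea, which is how the paper proceeds, is a one-step direct argument: $\mathcal{H}om_{\Omega_X}({F},{J})$ is itself \emph{flasque}. A section over an open $U$ is a dg-map ${F}|_U\to{J}|_U$, which by the extension-by-zero adjunction is the same as a map $j_{!}({F}|_U)\to{J}$ in $\Omega_X$-dgmod; since $j_{!}({F}|_U)\hookrightarrow{F}$ is a monomorphism and ${J}$ is injective in $\Omega_X$-dgmod (this category has enough injectives by \cite{MR0310287}), the map extends to a global dg-map ${F}\to{J}$, i.e.\ to a global section, following the pattern of \cite[Prop.\ 4.1.3]{MR0102537}. Flasque sheaves are $\Gamma$-acyclic, so Grothendieck's theorem applies and its five-term sequence is exactly the assertion. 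If you prefer to salvage your route via the structure of injectives, you would need to prove something like: every injective in $\Omega_X$-dgmod is a direct summand of a ``cofree'' object $G(N)$ with $N$ graded-injective, for which $\mathcal{H}om_{\Omega_X}({F},G(N))\cong\mathcal{H}om_{\Omega_X^{\sharp}}({F},N)$ is flasque by the classical argument — but that is substantially more machinery than the paper's direct extension argument.
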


\begin{proof}
The result follows by applying Grothendieck spectral sequence
to the composition of $H^0(X,-)$ and 
$\mathcal{H}om_{\Omega_X}({F},-)$, see 
\cite[Thm. 2.4.1]{MR0102537}.
Let ${J}$ be an injective differential graded $\Omega_X$-Module. Let us prove
that $\mathcal{H}om_{\Omega_X}({F},{J})$ is flasque.
We follow \cite[Prop. 4.1.3]{MR0102537}.
Let $j:U\hookrightarrow X$ be the inclusion of some open subset $U$.
Let us denote by ${F}|_U:=j^*{F}$.

\

\noindent
Any local $\OO_X$-map ${F}|_U\to{J}|_U$ is the same as a
global $\OO_X$-map $j_{!}({F}|_U)\to{J}$. 
Given that ${J}$
is injective (also as an $\OO_X$-Module)
and $j_{!}({F}|_U)\subseteq{F}$, this map extends
to a $\OO_X$-linear map ${F}\to{J}$. 
Now, if a given local $\OO_X$-map commutes with the differentials,
then the extension also commutes with the differentials because
$J$ is injective in $\Omega_X$-dgmod.
Hence, 
$\mathcal{H}om_{\Omega_X}({F},{J})$ is flasque.
\end{proof}

\medskip
  
\subsection{Pfaff ideals.} \label{pfaff ideals}

\

\noindent
Let $X$ be a smooth irreducible algebraic variety.
Now we focus on exterior differential ideals, that is, differential graded submodules $\II \subset \Omega_X$.
For $r \in \N$ denote $\II^r \subset \Omega^r_X$ the homogeneous component of degree $r$ of $\II$. See \cite{MR1083148} for more information.

\

\begin{definition} \label{def pfaff ideal}
Let $\II \subset \Omega_X$ be a differential graded ideal.

a) We say that $\II$ is  \emph{generated in degree one}, or that it is a \emph{Pfaff ideal},
if it is generated, as a differential graded ideal, by its homogeneous component of degree one $\II^1 \subset \Omega_X^1$. 
Equivalently, $\II^r = \Omega_X^{r-1} \wedge \II^1 + \Omega_X^{r-2} \wedge d(\II^1)$ for all $r$. 
Notice that, conversely, if $\II^1$ is a coherent subsheaf of $\Omega_X^1$, the last formula defines a differential graded ideal $\II$ generated by $\II^1$.

b) We say that $\II$ is an \emph{integrable Pfaff ideal} if it is generated in degree one and $d(\II^1) \subset \Omega_X^{1} \wedge \II^1$. 
Notice that in this case we have $\II^r = \Omega_X^{r-1} \wedge\II^1$ for all $r$. 

c) If $\II \subset \Omega_X$ is a differential graded ideal, we say it is a \emph{singular integrable Pfaff ideal} 
if $\II$ is generated in degree one and $d(\II^1|_U) \subset \Omega_U^{1} \wedge \II^1|_U$
for some (Zariski) open dense subset $U \subset X$. Notice that then $\II|_U$ is an integrable  Pfaff ideal on $U$.

d) Similarly, a differential graded $\Omega_X$-Module $F$ is \emph{generated in degree one}
if $F^r = \Omega_X^{r-1} \wedge F^1 + \Omega_X^{r-2} \wedge d(F^1)$ for all $r$. 
The other definitions above extend similarly to  $\Omega_X$-Modules.

e) If $\II \subset \Omega_X$ is a differential graded ideal, we say that $\II$ is locally-free (resp. \emph{reflexive}) if it is locally-free as $\OO_X$-Module (resp. reflexive, that is, isomorphic to its double dual). Equivalently, if $\II^r$ is a locally-free $\OO_X$-Module (resp. reflexive) for all $r$.
\end{definition}

\

\begin{definition} \label{singular scheme}
\
Let $F$ be a coherent sheaf of $\OO_X$-Modules. There exists a non-empty Zariski open $U \subset X$ such that $F|_U$ is locally free, of rank $r(F)$.
Taking the largest such $U$, one calls $S = X - U$ the \emph{singular set} $\text{Sing}(F)$ of $F$. The closed set $\text{Sing}(F)$ may be characterized as
the union for $j  > 0$ of the supports of the coherent sheaves $\EE xt_{ \OO_X}^j(F, \OO_X)$, see \cite{okonek1980vector} or \cite{MR0463157} (III, Ex. 6.6).
Alternatively, $\text{Sing}(F)$ is the closed set defined by the $r(F)$-th Fitting ideal of $F$, see \cite{MR1322960}, \cite{lang2004algebra},  or \cite{bourbaki2007elements}, \S 3, Exercise 8.

\noindent
If $\II \subset \Omega_X$ is a differential ideal, from the exact sequence $0 \to \II \to \Omega_X \to  \Omega_X/\II \to 0$ of $\OO_X$-Modules, and assuming $X$ smooth, one easily obtains 
$\text{Sing}(\II) \subset \text{Sing}(\Omega_X/\II)$, which is usually a strict inclusion. We shall denote 
$$S(\II) = \text{Sing}(\Omega_X/\II),$$ 
and call this set the singular set of the ideal $\II$, if there is no danger of confusion.
\end{definition}

\

\noindent
For instance, if $\omega_1, \dots, \omega_q$ are one-forms in $X$ we denote 
\begin{equation}
\II = \langle\omega_1, \dots, \omega_q\rangle
\label{ideal generated}
\end{equation}
the differential graded ideal generated by $\omega_1, \dots, \omega_q$.
Then $\II$ is an integrable Pfaff ideal when $d\omega_i \in \sum_j \Omega_X^1 \wedge \omega_j$ for all $i$, that is, locally,
\begin{equation}
d\omega_i  = \sum_j \mu_{ij} \wedge \omega_j  
\label{frobenius}
\end{equation}
for some one-forms $\mu_{ij}$. This implies the Frobenius condition $\omega  \wedge d\omega_i = 0$ for all $i$, where 
$\omega = \omega_1 \wedge \dots \wedge \omega_q$. Conversely, on the open set $U$ where $\omega_1, \dots, \omega_q$ are linearly independent over $\OO_X$ the Frobenius condition implies
that $\II|_U$ is a integrable Pfaff ideal on $U$. If $\omega_1, \dots, \omega_q$ are linearly independent over $\OO_X$ on a dense open set then the singular set of $\II$ is the set of zeros of the $q$-form $\omega = \omega_1 \wedge \dots \wedge \omega_q$.

\

\begin{definition} \label{analytic sheaf} As in \cite{serre1956geometrie}, and \cite{grothendieck1956faisceaux}, denote $X^h$ the complex analytic variety associated to $X$, and $\OO^h_X$ its sheaf of holomorphic functions.
If $F$ is an $\OO_X$-Module, $F^h$ denotes the associated sheaf of $\OO^h_X$-Modules on $X^h$. As shown in the references above, $F \mapsto F^h$ is an exact functor.
If $F$ is a graded $\Omega_X$-Module, it easily follows that 
$F^h$ is a graded $\Omega_X^h$-Module. If ${F_1}$ and ${F_2}$ are coherent $\OO_X$-Modules and $D: F_1 \to F_2$ is a differential operator of order $r$ then there exists
a canonically defined differential operator of order $r$, $D^h: F_1^h \to F_2^h$, extending $D$; see \cite{deligne1969equation}, Chapter II, (6.6). 
Thus, if $F$ is a differential graded $\Omega_X$-Module,  we may naturally define  a differential graded $\Omega_X^h$-Module $F^h$. We obtain an exact functor $F \mapsto F^h$ from differential graded $\Omega_X$-Modules to differential graded $\Omega_X^h$-Modules.
If ${F}$ and ${G}$ are differential graded $\Omega_X$-Modules, we denote
$\mathcal{H}om_{\Omega_X}({F},{G})^h$ the sheaf of homomorphisms $F^h \to G^h$ of differential graded $\Omega^h_X$-Modules that commute with the differentials. As usual, we shall denote by the same letter $d$ the exterior derivative of differential forms, in the algebraic De Rham complex $(\Omega_X, d)$ and in the analytic De Rham complex $(\Omega^h_X, d)$.
\end{definition}

\newpage

\section{Deformations of exterior differential ideals.}

\

\subsection{Infinitesimal deformations.}

\
 
\begin{definition} \label{def family}
A \emph{family of differential graded ideals parametrized by a scheme $S$} consists of a morphism of schemes  $f: \XX \to S$ and  
a coherent subsheaf  $\II \subset \Omega_{\XX/S}$ which is a differential graded ideal of the differential graded algebra 
$\Omega_{\XX/S}$, and such that $\Omega_{\XX/S}/\II$ is flat over $S$.

\

\noindent
In this article we shall focus on families of differential graded ideals as above such that $\XX = X \times S$, and $f$ is the canonical projection,
that is, the ambient variety $X$ does not deform.
We refer to these as families of differential graded ideals \emph{on $X$}.
\end{definition}

\

\noindent
In order to work out the infinitesimal deformation theory of exterior differential ideals, let us start with two lemmas.

\begin{lemma}\label{trivial-def}
Let $X$ be a algebraic variety over $\C$ and
let $X_A$ be the trivial deformation of $X$ over a local Artin ring $A$.
Then $\Omega_{X}\otimes_{\C} A= \Omega_{X_A|A}$
in $\Omega_{X_A|A}$-dgmod.
\end{lemma}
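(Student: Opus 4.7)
The plan is to identify $\Omega_{X_A/A}$ with $\Omega_X\otimes_{\C} A$ level by level, as graded $\OO_{X_A}$-modules first, then carry along the wedge-product structure and the differential. Since the deformation is trivial we have a Cartesian square
\[
\xymatrix{
X_A \ar[r]^{\pi} \ar[d] & X \ar[d] \\
\operatorname{Spec} A \ar[r] & \operatorname{Spec}\C
}
\]
and $\OO_{X_A}=\OO_X\otimes_{\C} A$. The first step is to invoke the standard base-change isomorphism for K\"ahler differentials, $\Omega^1_{X_A/A}\cong \pi^*\Omega^1_X$. Since $A$ is flat over $\C$ (being a finite-dimensional $\C$-vector space), $\pi$ is flat and $\pi^*\Omega^1_X=\Omega^1_X\otimes_{\OO_X}\OO_{X_A}=\Omega^1_X\otimes_{\C} A$.

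For higher degrees I would take exterior powers over $\OO_{X_A}$; since the formation of $\bigwedge^r$ commutes with flat base change on a locally finitely presented module, one obtains
\[
\Omega^r_{X_A/A}=\bigwedge^r_{\OO_{X_A}}\Omega^1_{X_A/A}\cong \bigwedge^r_{\OO_{X_A}}(\Omega^1_X\otimes_{\C} A)\cong \Omega^r_X\otimes_{\C} A.
\]
This takes care of the graded $\Omega_{X_A/A}^{\sharp}$-module structure: wedge product on the right corresponds, under these isomorphisms, to the $A$-bilinear extension of the wedge product on $\Omega_X$.

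It remains to match the differentials. The relative de Rham differential $d_{X_A/A}$ is $A$-linear by construction, and on the image of $\pi^*$, i.e.\ on elements of the form $\omega\otimes 1$ with $\omega\in \Omega^r_X$, it satisfies $d_{X_A/A}(\omega\otimes 1)=d_X\omega\otimes 1$ since the relative differential restricts on fibres to the absolute one and commutes with pullback along $\pi$. $A$-linearity together with the Leibniz rule then forces $d_{X_A/A}(\omega\otimes a)=d_X\omega\otimes a$ for all $a\in A$, which is exactly the differential $d_X\otimes \operatorname{id}_A$ on $\Omega_X\otimes_{\C} A$. So the isomorphism constructed degree by degree is compatible with $d$ and with wedge multiplication, hence is an isomorphism in $\Omega_{X_A/A}$-dgmod.

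There is no real obstacle here; the only subtle point is being careful that the three pieces of structure (the $\OO_{X_A}$-action, the wedge product, and the differential) all transport correctly under the base-change isomorphism. I would state the base-change step for $\Omega^1$ as the one black-box input (it is in EGA $\mathrm{IV}_4$), and then the rest is a direct verification using flatness of $A/\C$ and $A$-linearity of $d_{X_A/A}$.
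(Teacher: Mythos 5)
Your proof is correct and follows essentially the same route as the paper's: both reduce to the degree-one identification $\Omega^1_{X_A|A}\cong\Omega^1_X\otimes_{\C}A$ and then extend it to the full de Rham complex with differential $d_X\otimes\mathrm{id}_A$ (the paper phrases this as ``the differential maps the elements of $A$ to zero''). The only cosmetic difference is that the paper derives the degree-one statement from the decomposition $\Omega^1_{X_A}=\Omega^1_X\otimes_{\C}A\oplus\Omega^1_A\otimes_{\C}\OO_X$ of absolute differentials, whereas you invoke base change for relative differentials directly; your write-up also spells out the compatibility checks in higher degrees (exterior powers, wedge product, and $d$) that the paper leaves implicit.
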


\begin{proof}
By hypothesis, $\OO_{X_A}=\OO_{X}\otimes_{\C}A$,
Then,
\[
\Omega^1_{X_A}= \Omega^1_{X}\otimes_{\C} A\oplus
\Omega^1_{A}\otimes_{\C}\OO_X
\Longrightarrow
\Omega^1_{X_A|A}= \Omega^1_{X}\otimes_{\C} A.
\]
It follows that $\Omega_{X_A|A}=\Omega_{X}\otimes_{\C} A$
where the differential maps the elements of $A$ to zero.
\end{proof}

\

\begin{lemma}
Let $X$ be an algebraic variety over $\C$ and
let $X_A$ be a deformation of $X$ over a local Artin ring $A$.
Then $\Omega_{X_A|A}\otimes_{A} \C= \Omega_{X}$
in $\Omega_{X}$-dgmod.
\end{lemma}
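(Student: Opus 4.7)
The plan is to reduce the statement to the base change property for K\"ahler differentials, and then to check that this identification respects the full DG-module structure.

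First, I would recall that for any morphism $\mathcal{Y}\to S$ and any base change $T\to S$, writing $\mathcal{Y}_T=\mathcal{Y}\times_S T$, there is a canonical isomorphism $\Omega^1_{\mathcal{Y}_T|T}\cong \Omega^1_{\mathcal{Y}|S}\otimes_{\OO_{\mathcal{Y}}} \OO_{\mathcal{Y}_T}$. Applying this with $S=\operatorname{Spec}(A)$, $T=\operatorname{Spec}(\C)$, $\mathcal{Y}=X_A$, and $\mathcal{Y}_T=X$ (the closed fibre, using flatness of $X_A$ over $A$ to identify $\OO_{X_A}\otimes_A\C=\OO_X$), I obtain a canonical isomorphism of $\OO_X$-Modules
\[
\Omega^1_{X_A|A}\otimes_A \C \;\cong\; \Omega^1_X.
\]

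Second, I would extend this to every degree. Since $\Omega^k_{X_A|A}=\bigwedge^k_{\OO_{X_A}}\Omega^1_{X_A|A}$ and exterior powers commute with tensor product (being constructed from tensor products and quotients), the natural maps $\Omega^k_{X_A|A}\otimes_A\C \to \Omega^k_X$ are isomorphisms for every $k$. This gives the required isomorphism of graded $\Omega_X^\sharp$-Modules; the $\Omega_X^\sharp$-action on the left inherits from that of $\Omega_{X_A|A}$ on $\Omega_{X_A|A}$, which, after tensoring with $\C$ over $A$, factors through $\Omega_{X_A|A}\otimes_A\C \cong \Omega_X$.

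Third, I would check compatibility with the differentials. The exterior derivative $d\colon \Omega^k_{X_A|A}\to \Omega^{k+1}_{X_A|A}$ is $A$-linear (it annihilates the constants $A\subset \OO_{X_A}$), so it descends to a $\C$-linear map on $\Omega^k_{X_A|A}\otimes_A\C$. Under the identification of step one, this descended map must coincide with the exterior derivative of $\Omega_X$, because both are $\C$-linear derivations of square zero whose restriction to $\OO_{X_A}\otimes_A\C=\OO_X$ is the universal derivation, and these properties determine $d$ uniquely. The Leibniz rule and the remaining axioms of an $\Omega_X$-dgmod are inherited from $\Omega_{X_A|A}$ by $A$-bilinearity of $\otimes_A$.

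The main obstacle is not deep: everything is a consequence of base change for K\"ahler differentials together with the universal characterization of the exterior derivative. The only point deserving some care is the implicit appeal to flatness of $X_A$ over $A$, used both to obtain $\OO_{X_A}\otimes_A\C=\OO_X$ and to guarantee that the ordinary tensor product agrees with its derived version in the relevant degrees, so that no higher $\operatorname{Tor}$ terms spoil the computation.
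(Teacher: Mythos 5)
Your proposal is correct and follows essentially the same route as the paper: the paper's proof is exactly the base-change identity $\Omega^1_{X}=\Omega^1_{X_A|A}\otimes_A\C\oplus\Omega^1_{\C|A}\otimes_A\OO_{X_A}=\Omega^1_{X_A|A}\otimes_A\C$ (using $\Omega^1_{\C|A}=0$ since $A\to\C$ is surjective), followed by the implicit observation that this extends to exterior powers and is compatible with $d$ — the two steps you spell out explicitly. Your closing worry about higher $\Tor$ terms is harmless but unnecessary, since base change for K\"ahler differentials requires no flatness, and the identification $\OO_{X_A}\otimes_A\C=\OO_X$ is part of the definition of a deformation rather than a consequence to be derived.
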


\begin{proof}
By hypothesis, 
$\OO_{X}=\OO_{X_A}\otimes_{A}\C$.
Then,
\[
\Omega^1_{X}= \Omega^1_{X_A|A}\otimes_{A}\C\oplus
\Omega^1_{\C|A}\otimes_A\OO_{X_A}=\Omega^1_{X_A|A}\otimes_{A}\C.
\]
It follows that $\Omega_{X}=\Omega_{X_A|A}\otimes_{A} \C$
in $\Omega_X$-dgmod.
\end{proof}

\medskip

\noindent
Consider the trivial extension,
\[
\xymatrix{
0\ar[r]&\C\ar[r]^{\varepsilon}&\C[\varepsilon]\ar[r]&\C\ar[r]&0.
}
\]
A first order deformation of $\II$ is a differential ideal $\II_\varepsilon$ over a
deformation $X_\varepsilon$ of $X$ such that 
$\Omega_{X_\varepsilon|\C[\varepsilon]}/\II_\varepsilon$
is flat over $\C[\varepsilon]=\C[t]/(t^2)$ and such that
$\C\otimes_{\C[\varepsilon]}\Omega_{X_\varepsilon|\C[\varepsilon]}/\II_\varepsilon
=\Omega_{X}/\II$.
For simplicity, let us denote $\Omega:=\Omega_X$ and $\Omega_{\varepsilon}:=
\Omega_{X_\varepsilon|\C[\varepsilon]}$. 

\begin{proposition} \label{first order deformations}
Let $\II$ be a differential ideal over an algebraic variety $X$.
Then, the vector space 
\[
\Hom_{\Omega}(\II,\Omega/\II)
\]
parametrizes first order deformations of $\II$ on $X$.
\end{proposition}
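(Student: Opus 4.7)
The plan is to establish an explicit $\C$-linear bijection, functorial in $\II$, between first order deformations $\II_\varepsilon$ of $\II$ and elements $\phi \in \Hom_\Omega(\II, \Omega/\II)$. This is the analogue, inside the category $\Omega_X$-dgmod, of the classical tangent space formula for Hilbert schemes.

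By Lemma \ref{trivial-def} I may identify $\Omega_\varepsilon = \Omega\otimes_\C \C[\varepsilon] = \Omega \oplus \varepsilon\Omega$, with differential the $\C[\varepsilon]$-linear extension of $d$. Given a deformation $\II_\varepsilon \subset \Omega_\varepsilon$, flatness of $\Omega_\varepsilon/\II_\varepsilon$ over $\C[\varepsilon]$ combined with the prescribed reduction modulo $\varepsilon$ yields a short exact sequence $0 \to \varepsilon\II \to \II_\varepsilon \to \II \to 0$ of $\C[\varepsilon]$-modules. Concretely, every local section $a \in \II$ admits a lift $a + \varepsilon a' \in \II_\varepsilon$ with $a' \in \Omega$ determined modulo $\II$, so setting $\phi(a) := a' \bmod \II$ produces a well-defined $\C$-linear map $\phi : \II \to \Omega/\II$. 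Stability of $\II_\varepsilon$ under multiplication by $\Omega_\varepsilon$ forces $\Omega^\sharp$-linearity of $\phi$, and stability under $d$ forces the commutation $d\phi = \phi d$; together these place $\phi$ in $\Hom_\Omega(\II, \Omega/\II)$.

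For the inverse, I assign to $\phi$ the subsheaf
\[
\II_\varepsilon := \{\, a + \varepsilon b \in \Omega_\varepsilon \ :\ a \in \II,\ b \bmod \II = \phi(a) \,\},
\]
which makes sense globally since only the class of $b$ modulo $\II$ enters. Closure under multiplication by $\Omega_\varepsilon$, under $d$, and under multiplication by $\varepsilon$ follow directly from, respectively, the $\Omega^\sharp$-linearity of $\phi$, the commutation $d\phi = \phi d$, and the identity $\varepsilon^2 = 0$ together with $\phi(0)=0$.

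The step I expect to be the main obstacle is the verification that $\Omega_\varepsilon/\II_\varepsilon$ is flat over $\C[\varepsilon]$, equivalently that multiplication by $\varepsilon$ is injective on the quotient. If $\varepsilon(\omega_0 + \varepsilon\omega_1)$ lies in $\II_\varepsilon$, matching components forces $\omega_0 \in \II$; choosing a local lift $\tilde\phi(\omega_0) \in \Omega$ of $\phi(\omega_0)$, one rewrites $\omega_0 + \varepsilon\omega_1 \equiv \varepsilon(\omega_1 - \tilde\phi(\omega_0)) \pmod{\II_\varepsilon}$, exhibiting the class in the image of $\varepsilon$. Once flatness is established, the two constructions are seen to be mutually inverse straight from the definitions, which will yield the claimed bijection of $\C$-vector spaces.
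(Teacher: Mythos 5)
Your proposal is correct and follows essentially the same route as the paper: lift a local section $\omega\in\II$ to $\omega+\varepsilon\eta\in\II_\varepsilon$ to define $\varphi(\omega)=\bar\eta$, and invert via the pullback ideal $\II\times_\varphi\Omega=\{\omega+\varepsilon\eta : \varphi(\omega)=\bar\eta\}$. The only (harmless) variation is that you verify flatness of $\Omega_\varepsilon/\II_\varepsilon$ directly through the local criterion $\ker(\varepsilon)=\varepsilon\cdot(\Omega_\varepsilon/\II_\varepsilon)$, whereas the paper deduces it from the Snake Lemma applied to the nine-term diagram obtained by tensoring the trivial extension with $\II_\varepsilon$, $\Omega_\varepsilon$ and $\Omega_\varepsilon/\II_\varepsilon$.
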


\begin{proof}
We follow \cite{MR2583634}, Proposition (2.3). Let $\II_{\varepsilon}$ be a first order deformation of $\II$.
Let us tensor (over $\C[\varepsilon]$) the trivial extension with
$\II_{\varepsilon}$, 
$\Omega_{\varepsilon}$ and $\Omega_{\varepsilon}/\II_{\varepsilon}$
to obtain the following commutative diagram of exact rows and
columns in $\Omega_\varepsilon$-dgmod,
\[
\xymatrix{
&0\ar[d]&0\ar[d]&0\ar[d]&\\
0\ar[r]&\II\ar[r]\ar[d]&\II_{\varepsilon}\ar[r]\ar[d]&\II\ar[r]\ar[d]&0\\
0\ar[r]&\Omega\ar[r]\ar[d]&\Omega_{\varepsilon}\ar[r]\ar[d]&\Omega\ar[r]\ar[d]&0\\
0\ar[r]&\Omega/\II\ar[r]\ar[d]&\Omega_{\varepsilon}/\II_{\varepsilon}\ar[r]\ar[d]&\Omega/\II\ar[r]\ar[d]&0\\
&0&0&0&\\
0\ar[r]&\C\ar[r]&\C[\varepsilon]\ar[r]&\C\ar[r]&0
}
\]

\noindent
Take $\omega\in\II^k$ and lift it to an element in $\II_{\varepsilon}^k$, 
$\omega+\varepsilon\eta$, where $\eta\in \Omega^k$.
Two liftings differ by an element in $\II^k$, hence the class of $\eta$ is a well-defined element in $\Omega^k/\II^k$.
Then, we have a degree zero $\Omega_X^{\sharp}$-map $\varphi:\II\to \Omega/\II$, 
$\omega\mapsto \overline{\eta}$. Given that the maps in the diagram commute with
the differentials (recall that $d\varepsilon=0$)
if $\omega+\varepsilon\eta$ lifts $\omega$, then
$d\omega+\varepsilon d\eta$ lifts $d\omega$. It follows that
$\varphi\in \Hom_{\Omega}(\II,\Omega/\II)$.

\

\noindent
Conversely, given $\varphi\in\Hom_{\Omega}(\II,\Omega/\II)$ define
$\II\times_\varphi \Omega$ using the following diagram in $\Omega$-dgmod,
\[
\xymatrix{
0\ar[r]&\II\ar[r]^-{\cdot\varepsilon}\ar@{=}[d]&\II\times_\varphi \Omega \ar[r]^-{\pi_1}\ar[d]&\II\ar[r]\ar[d]^\varphi&0\\
0\ar[r]&\II\ar[r]&\Omega\ar[r]&\Omega/\II\ar[r]&0
}
\]
Specifically,
\begin{equation}
\II\times_\varphi \Omega:=\{\omega+\varepsilon\eta\,\colon\,\varphi(\omega)=\overline{\eta}\}\subseteq \Omega_{\varepsilon}. \label{deformation formula}
\end{equation}
It is clearly a differential ideal in $\Omega_{\varepsilon}$.
Replacing $\II_{\varepsilon}$ by $\II\times_\varphi \Omega$ in the first diagram 
above, we obtain from the definition of $\II\times_\varphi \Omega$
that the first row is exact 
and by the Snake Lemma, the third row is also exact.
Hence, $\Omega_{\varepsilon}/\II_{\varepsilon}$ is flat over $\C[\varepsilon]$ (see \cite{MR2583634}, Proposition 2.2) and
$\C\otimes_{\C[\varepsilon]} \Omega_{\varepsilon}/\II_{\varepsilon}=\Omega/\II$.
\end{proof}

\medskip

\noindent
Let
\[
e:\quad 0\to J\to B\to A\to 0,\quad e\in \Ex(A,J)
\]
be a ring extension with $J^2=0$. In particular, $J$ has a structure
of $A$-module.
The zero element in $\Ex(A,J)$ corresponds to the trivial extension $B=A[J]$, \cite{MR0217083} \S 18, \cite{MR2583634}.

\

\noindent
As before, denote $X_B$, resp.  $X_A$, the trivial deformation of $X$ over $B$, resp. A. Let $\II_A$ be a differential ideal in $\Omega_A:=\Omega_{X_A|A}$.
From Definition  \ref{def family}, a deformation  of $\II_A$ over $B$ is a differential ideal $\II_B$ in 
$\Omega_B:=\Omega_{X_B|B}$ such that $\Omega_B/\II_B$
is flat over $B$ and $(\Omega_B/\II_B)\otimes_B A=\Omega_A/\II_A$.

 \

\begin{proposition}\label{def-inf}
Let $\II_A$ be a differential ideal over $X_A$.
Then, the set of deformations of $\II_A$ over $B$
is a pseudotorsor for the $A$-module
\[
\Hom_{\Omega_A}(\II_A,J\otimes_A \Omega_A/\II_A).
\]
In other words, if there exists a deformation $\II_B$,
the natural action 
in the set of deformations is free and transitive.
\end{proposition}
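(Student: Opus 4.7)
The plan is to mirror the argument of Proposition \ref{first order deformations}, with the square-zero ideal $(\varepsilon) \subset \C[\varepsilon]$ replaced by the square-zero ideal $J \subset B$. By Lemma \ref{trivial-def}, $\Omega_B = \Omega_X \otimes_\C B$, so tensoring the exact sequence $0 \to J \to B \to A \to 0$ with the $\C$-flat module $\Omega_X$ produces an exact sequence
\[
0 \to J \otimes_A \Omega_A \to \Omega_B \to \Omega_A \to 0
\]
in $\Omega_B$-dgmod. The same snake-lemma diagram as in Proposition \ref{first order deformations}, applied to any deformation $\II_B$ of $\II_A$, shows that $\II_B$ is itself $B$-flat and that $\ker(\II_B \to \II_A) = J \otimes_A \II_A$.

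To define the difference associated to two deformations $\II_B$ and $\II_B'$, for each local section $\omega_A$ of $\II_A$ I would choose lifts $\tilde\omega \in \II_B$ and $\tilde\omega' \in \II_B'$; their difference $\tilde\omega - \tilde\omega'$ lies in $J \otimes_A \Omega_A$. Changing either lift modifies the difference by an element of $J \otimes_A \II_A$, so the class
\[
\varphi(\omega_A) := [\tilde\omega - \tilde\omega'] \in J \otimes_A (\Omega_A/\II_A)
\]
is well defined. The induced map is $\Omega_A^{\sharp}$-linear by construction and commutes with $d$, because $d\tilde\omega \in \II_B$ and $d\tilde\omega' \in \II_B'$ are themselves valid lifts of $d\omega_A$; hence $\varphi \in \Hom_{\Omega_A}(\II_A, J \otimes_A \Omega_A/\II_A)$.

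For the inverse direction, given $\II_B$ and $\varphi$, I would imitate formula (\ref{deformation formula}) and set
\[
\II_B' := \bigl\{\,\tilde\omega + \xi \in \Omega_B \,:\, \tilde\omega \in \II_B,\ \xi \in J \otimes_A \Omega_A,\ [\xi] = \varphi(\overline{\tilde\omega})\,\bigr\},
\]
where $\overline{\tilde\omega}$ denotes the image of $\tilde\omega$ in $\II_A$. The $\Omega_A^{\sharp}$-linearity and $d$-compatibility of $\varphi$ guarantee that $\II_B'$ is a differential graded ideal of $\Omega_B$, and the diagram of Proposition \ref{first order deformations} (with $\varepsilon$ replaced by $J$) together with the snake lemma shows that $\Omega_B/\II_B'$ is $B$-flat and restricts to $\Omega_A/\II_A$ modulo $J$. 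Transitivity of the action follows directly from this construction, while freeness is the observation that $\varphi(\II_B, \II_B') = 0$ iff every $\tilde\omega \in \II_B$ already lies in $\II_B'$, i.e.\ $\II_B = \II_B'$.

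I expect the main technical obstacle to be the verification in the converse construction that $\Omega_B/\II_B'$ is genuinely $B$-flat and that its reduction modulo $J$ is $\Omega_A/\II_A$. For arbitrary Artin $A$ and arbitrary square-zero $J$ this amounts to redoing the snake-lemma diagram of Proposition \ref{first order deformations} in a slightly more general setting, but no new ideas beyond the flatness of $\Omega_B$ (Lemma \ref{trivial-def}) and the identity $J^2 = 0$ are required.
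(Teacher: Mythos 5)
Your proposal is correct and follows essentially the same route as the paper: the same three-row diagram with exact rows obtained from flatness of $\Omega_A/\II_A$ (giving $\ker(\II_B\to\II_A)=J\otimes_A\II_A$), the same difference construction via lifts, and your explicit set $\{\tilde\omega+\xi : [\xi]=\varphi(\overline{\tilde\omega})\}$ coincides with the paper's translated ideal $\II_B''$ defined through the universal condition on lifts. The only cosmetic difference is that the paper verifies freeness and transitivity via the additivity relation $\varphi_3=\varphi_1+\varphi_2$ for three deformations, which is equivalent to your direct argument.
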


\begin{proof}
We follow the proof of \cite[Theorem 6.2]{MR2583634}.
Suppose there exists a deformation $\II_B$ and 
consider the following diagram of exact rows and columns in $\Omega_B$-dgmod,
\[
\xymatrix{
&0\ar[d]&0\ar[d]&0\ar[d]&\\
0\ar[r]&J\otimes_A \II_A\ar[r]\ar[d]&\II_B\ar[r]\ar[d]&\II_A\ar[r]\ar[d]&0\\
0\ar[r]&J\otimes_A \Omega_A\ar[r]\ar[d]&\Omega_B\ar[r]\ar[d]&\Omega_A\ar[r]\ar[d]&0\\
0\ar[r]&J\otimes_A \Omega_A/\II_A\ar[r]\ar[d]&\Omega_B/\II_B\ar[r]\ar[d]&\Omega_A/\II_A\ar[r]\ar[d]&0\\
&0&0&0&\\
0\ar[r]&J\ar[r]&B\ar[r]&A\ar[r]&0
}
\]

\noindent
Let $\II_B',\II_B''\subseteq\Omega_B$ be two 
deformations of $\II_A$ over $B$ (possibly different from $\II_B$). 
Let $\omega\in \II_A^k$ and consider two
liftings $\omega'\in \II_B'^k$ and $\omega''\in \II_B''^k$. 
The difference, $\omega'-\omega''$ is an element in $\Omega_B^k$
and maps to zero in $\Omega_A^k$. Hence, $\omega'-\omega''\in J\otimes_A \Omega_A^k$.
Let us call 
$\varphi(\omega)=\overline{\omega'-\omega''}\in J\otimes_A \Omega_A^k/\II_A^k$.
Given that $\omega'$ and $\omega''$ are defined up to an element in $J\otimes_A \II_A^k$,
$\varphi(\omega)$ is a well defined degree zero $\Omega_{X_A}^{\sharp}$-homomorphism
and commutes with the differentials. Then, 
$\varphi\in\Hom_{\Omega_A}(\II_A,J\otimes_A \Omega_A/\II_A)$.
We say that $\varphi$ is defined by $\II_B'$ and $\II_B''$.

\

\noindent
Assume now that we have a deformation $\II_B'$ and a 
morphism $\varphi:\II_A\to J\otimes_A \Omega_A/\II_A$ of $\Omega_A$-Modules.
Let us define the action of $\varphi$ on $\II_B'$ by constructing
another deformation $\II_B''$ in such a way
that the map associated to $\II_B'$ and $\II_B''$ is precisely $\varphi$.
Let us call 
$p:\Omega_B\to \Omega_A$ the pull-back map
induced by $p:B\to A$. Then,
\[
\II_B'':=\left\{ \omega\in p^{-1}(\II_A)\,\colon\,
\overline{\omega-\eta}=\varphi(p(\omega))
\quad\forall\eta\in p^{-1}(p(\omega))\cap \II_B'  \right\}.
\]
A simple computation shows that $\II_B''$ is a deformation of $\II_A$ over $B$
and by construction, $\varphi$ is defined by $\II_B'$ and $\II_B''$.

\

\noindent
Note finally that if $\II_B'$, $\II_B''$ and $\II_B'''$ are three 
different deformations of $\II_A$, and
if $\varphi_1$ is defined by $\II_B',\II_B''$ as above, 
$\varphi_2$ defined by $\II_B'',\II_B'''$,
and $\varphi_3$ defined by $\II_B',\II_B'''$, 
then $\varphi_3=\varphi_1+\varphi_2$. Thus
the operation $(\II_B,\varphi)\to \II_B'$ is a free and transitive action of
the group 
$\Hom_{\Omega_A}(\II_A,J\otimes_A \Omega_A/\II_A)$
on the set (possibly empty) of deformations of $\II_A$ over $B$.
\end{proof}

\medskip

\subsection{Obstruction theory.}

\

\noindent
Let $X$ be an algebraic variety over $\C$ 
and let $\II\subseteq\Omega_X$ be a differential. Assume that $\II_A$ is a deformation of $\II$
over $A$ and let $e\in \Ex(A,J)$ be a commutative ring extension, 
\[
e:\quad 0\to J\to B\to A\to 0,\quad J^2=0.
\]
Let us denote $\Omega_A:=\Omega_{X_A|A}$
and $\Ext_{\Omega_A}(U,V)$  the 
$\C$-vector space of extensions between $V$ and $U$
in $\Omega_A$-dgmod.

\begin{proposition}\label{obst}
With the notation above, there exists an element 
\[
ob(e)\in\Ext_{\Omega_A}(\II_A,J\otimes_A \Omega_A/\II_A)
\]
such that $ob(e)=0$ if and only if there exists a deformation of $\II_A$ over $B$.
\end{proposition}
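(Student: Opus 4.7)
The plan is to construct $ob(e)$ as the class of an extension obtained by pushing out the preimage of $\II_A$ in $\Omega_B$, and then to identify deformations $\II_B$ with splittings of that extension. This mirrors the classical construction for Hilbert schemes but carried out in the dg category $\Omega_A$-dgmod.

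Let $p: \Omega_B \twoheadrightarrow \Omega_A$ denote the surjection induced by $B \twoheadrightarrow A$ and set $\II_B^0 := p^{-1}(\II_A) \subseteq \Omega_B$. Since $p$ is a morphism of differential graded algebras and $\II_A$ is a differential graded ideal, $\II_B^0$ is a differential graded ideal of $\Omega_B$. Triviality of $X_B$ over $B$ gives $\ker(p) = J \otimes_A \Omega_A \subseteq \II_B^0$, yielding a short exact sequence
\[
0 \to J \otimes_A \Omega_A \to \II_B^0 \to \II_A \to 0
\]
in $\Omega_B$-dgmod; as its outer terms are annihilated by $J$ (because $J^2 = 0$), this is in fact a sequence in $\Omega_A$-dgmod. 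Pushing out along the canonical surjection $J \otimes_A \Omega_A \twoheadrightarrow J \otimes_A (\Omega_A/\II_A)$ produces $E := \II_B^0 / (J \otimes_A \II_A)$ together with an extension
\[
0 \to J \otimes_A (\Omega_A/\II_A) \to E \to \II_A \to 0
\]
in $\Omega_A$-dgmod. Declare $ob(e) \in \Ext^1_{\Omega_A}(\II_A, J \otimes_A \Omega_A/\II_A)$ to be the class of this extension; no arbitrary choices enter the construction, so it depends only on $e$ and $\II_A$.

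It remains to show $ob(e) = 0$ if and only if a deformation $\II_B$ of $\II_A$ over $B$ exists. Suppose first that $\II_B$ is a deformation. By the local criterion for flatness over the Artin ring $B$, flatness of $\Omega_B/\II_B$ is equivalent to $\II_B \cap (J \otimes_A \Omega_A) = J \otimes_A \II_A$, i.e.\ the kernel of $\II_B \twoheadrightarrow \II_A$ equals $J \otimes_A \II_A$. Hence $\II_B \subseteq \II_B^0$ contains $J \otimes_A \II_A$, and its image $\II_B/(J \otimes_A \II_A) \hookrightarrow E$ is an $\Omega_A$-dgmod section of $E \to \II_A$, splitting the extension. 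Conversely, given a section $s: \II_A \to E$, define $\II_B := \{x \in \II_B^0 : \bar{x} \in s(\II_A)\}$, where $\bar{x}$ denotes the image in $E$. Using that $s$ is $\Omega_A$-linear and commutes with the differential, one verifies that $\II_B$ is stable under $\Omega_B$-multiplication and under $d_B$, surjects onto $\II_A$ with kernel $J \otimes_A \II_A$, and is therefore a valid deformation.

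The main subtlety is to keep every construction inside the differential graded category. Specifically: $E$ must inherit a well-defined differential from $\II_B^0$, and a section $s$ that commutes with $d$ on $\II_A$ must yield a $d$-stable $\II_B \subseteq \Omega_B$. Both facts follow from the explicit description $d_B|_{J \otimes_A \Omega_A} = \mathrm{id}_J \otimes d_A$, which guarantees that the pushout descends compatibly with all differentials and that the pullback of the section lands inside a $d$-closed submodule. Functoriality of the entire construction in $e$ then shows that $ob(-)$ is a genuine obstruction map, completing the argument.
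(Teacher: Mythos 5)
Your proof is correct and follows essentially the same route as the paper's: your $\II_B^0=p^{-1}(\II_A)$ is the paper's $\ker(\beta)$, your pushout $E=\II_B^0/(J\otimes_A\II_A)$ is literally its $M=\ker(\beta)/\im(\alpha)$, and both directions of the correspondence (the image of a deformation $\II_B$ in $E$ giving a section, the preimage of $s(\II_A)$ giving a deformation) coincide with the paper's, with your appeal to the local flatness criterion merely making explicit what the paper's diagram of exact rows and columns encodes, including the implicit use of $\Tor_1^A(J,\Omega_A/\II_A)=0$ to identify the kernel term. One slip to fix: the sequence $0\to J\otimes_A\Omega_A\to\II_B^0\to\II_A\to 0$ is \emph{not} yet in $\Omega_A$-dgmod, since the middle term is not annihilated by $J$ (one only has $J\cdot\II_B^0\subseteq J\otimes_A\II_A$, which is generally nonzero); it is only after the pushout that all three terms are killed by $J$, which is exactly the paper's observation that $J.M=0$ and is what legitimately places the class $ob(e)$ in $\Ext_{\Omega_A}(\II_A,J\otimes_A\Omega_A/\II_A)$.
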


\begin{proof}
We follow \cite[Th. 6.4.5]{MR2222646}. Let $e\in \Ex(A,J)$ be an
extension as above. Assume that we have a deformation $X_B$ of $X_A$ over $B$.
Recall from Lemma \ref{trivial-def}
that if $X_B$ is the trivial deformation, then 
$\Omega_B= B\otimes_A\Omega_A$.
Let us construct $ob(e)$ by considering the following diagram in
$\Omega_B$-dgmod,
\[
\xymatrix{
&0\ar[d]& &0\ar[d]&\\
 &J\otimes_A \II_A\ar[d]& &\II_A \ar[d]& \\
0\ar[r]&J\otimes_A \Omega_A\ar[r]\ar[d]&\Omega_B\ar[r]&\Omega_A\ar[r]\ar[d]&0\\
 &J\otimes_A \Omega_A/\II_A\ar[d]& &\Omega_A/\II_A \ar[d]&\\
&0& &0&\\
0\ar[r]&J\ar[r]&B\ar[r]&A\ar[r]&0
}
\]
Since $\Omega_A/\II_A$ is flat over $A$, 
$\Tor_1^{A}(J,\Omega_A/\II_A)=0$ and it follows that the first column is exact.
The exactness of the row follows by taking $e\otimes_A\Omega_B$. Then, the previous diagram has exact rows and columns.

\

\noindent
Consider now the induced $\Omega_B$-homomorphisms
$\alpha:J\otimes_A \II_A\to \Omega_B$ and 
$\beta:\Omega_B\to \Omega_A/\II_A$. 
Clearly, $\im(\alpha)\subseteq\ker(\beta)$
and we can define ${M}:=\ker(\beta)/\im(\alpha)$.
It is easy to see that the following sequence is exact,
\[
\xymatrix{
ob(e):&0\ar[r]&J\otimes_A \Omega_A/\II_A\ar[r]&{M}\ar[r]&\II_A\ar[r]&0.
}
\]
Given that $J.{M}=0$, the object ${M}$ in $\Omega_B$-dgmod 
is also in $\Omega_A$-dgmod, hence
\[
ob(e)\in\Ext_{\Omega_A}(\II_A,J\otimes_A \Omega_A/\II_A).
\]

\noindent
Let us see now that deformations of $\II_A$ over $B$ are in one-to-one correspondence 
with splittings of $ob(e)$.
Given a splitting $\psi:\II_A\to{M}$, 
consider $X_B$ as the trivial deformation, hence $\Omega_B= B\otimes_A\Omega_A$
and let $\II_B$ be the inverse image of $\psi(\II_A)$ in $\ker(\beta)\subseteq 
\Omega_B$. Then, $\II_B$ is a differential ideal of $\Omega_B= B\otimes_A\Omega_A$
and a deformation of $\II_A$ over $B$.
Conversely, given a deformation $\II_B$, 
the submodule $\II_B/\im(\alpha)\subseteq 
{M}$ maps isomorphically into $\II_A$, therefore
defining a splitting of ${M}\to \II_A$.
\end{proof}

\medskip

\

\begin{proposition} \label{commute ext}
If $J\cdot \mathfrak{m}_A =0$, then we can replace $\Ext_{\Omega_A}(\II_A,J\otimes_A \Omega_A/\II_A)$ with $\Ext_{\Omega}(\II,J\otimes_A \Omega_A/\II_A)$ in the statement of Proposition \ref{obst}. And also,
$$\Ext_{\Omega}(\II, J \otimes_A \Omega_A/\II_A) = \Ext_{\Omega}(\II,\Omega/\II) \otimes_{\mathbb{C}} J = \Ext^1_{\Omega}(\II,\Omega/\II) \otimes_{\mathbb{C}} J.$$
\end{proposition}
\begin{proof}
We follow \cite[Prop.6.4.7]{MR2222646}.  Given that $\II_A$ is flat over $A$, it follows $\mathrm{Tor}_1^A(\II_A,\C)=0$ and then, we can apply $-\otimes_A\C$ to an extension 
in $\Ext_{\Omega_A}(\II_A,J\otimes_A \Omega_A/\II_A)$
to get the following diagram with exact rows and exact column
\[
\xymatrix{
&&&0\ar[d]\\
&&&\II_A\otimes_A \mathfrak{m}_A\ar[d]\\
0\ar[r] &J\otimes_A \Omega_A/\II_A\ar@{=}[d]\ar[r]&M\ar[d]\ar[r]& \II_A \ar[d]\ar[r]&0\\
0 \ar[r]&J\otimes_A \Omega_A/\II_A\ar[r]&M\otimes_A\C\ar[r]& \II \ar[d]\ar[r]&0\\
&&&0\\
}
\]
The hypothesis $J\cdot \mathfrak{m}_A =0$ implies  $J\otimes_A \C=J$ and then
in the bottom row we have, $(J\otimes_A \Omega_A/\II_A)\otimes_A\C = J\otimes_A \Omega_A/\II_A$. The exactness of the column follows from the fact that $I_A$ is flat over $A$.
Now, by the Snake Lemma, we can complete the diagram in the following way
\[
\xymatrix{
&&0\ar[d]&0\ar[d]\\
&&\II_A\otimes_A \mathfrak{m}_A\ar[d]\ar@{=}[r]&\II_A\otimes_A \mathfrak{m}_A\ar[d]\\
0\ar[r] &J\otimes_A \Omega_A/\II_A\ar@{=}[d]\ar[r]&M\ar[d]\ar[r]& \II_A \ar[d]\ar[r]&0\\
0 \ar[r]&J\otimes_A \Omega_A/\II_A\ar[r]&M\otimes_A\C\ar[r]\ar[d]& \II \ar[d]\ar[r]&0\\
&&0&0\\
}
\]
This shows that the middle row splits if and only if the bottom row splits.

\

\noindent
To prove the second statement, let $e\in \Ex(A,J)$ be an extension, where $A$ is a local Artinian $\mathbb{C}$-algebra.
Then, the ideal $J$ has a structure of a $\mathbb{C}$-vector space
and from \cite[Prop.C3.1(ii)]{MR1863391} we can take $J$ out,
\[
\text{Hom}_{\Omega}(\II,J\otimes_{\C}\Omega/\II)=
\text{Hom}_{\Omega}(\II,\Omega/\II)\otimes_{\mathbb{C}} J,
\]
\[
\Ext^1_{\Omega}(\II,J\otimes_{\C}\Omega/\II)=
\Ext^1_{\Omega}(\II,\Omega/\II)\otimes_{\mathbb{C}} J.
\]
The isomorphism between $\Ext_{\Omega}^1$ and the Yoneda group $\Ext_{\Omega}$ is standard, see for example \cite[Ch.III, Th.2.4]{MR0346025} or \cite[Def.2.4]{MR3640821}.

\end{proof}

\medskip

\begin{corollary} \label{tangent obstruction}
Let $X$ be an algebraic variety over $\C$ 
and let $\II\subseteq\Omega_X$ be an exterior differential
ideal.
The functor of deformations of $\II$ on $X$ has a tangent-obstruction theory with
\[
T_ 1 = \Hom_{\Omega}(\II,\Omega/\II)
,\quad
T_2 = \Ext^1_{\Omega}(\II,\Omega/\II).
\]
\end{corollary}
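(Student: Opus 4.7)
My proof plan: this corollary is essentially a packaging statement, assembling Propositions \ref{first order deformations}, \ref{def-inf}, and \ref{obst} into the standard language of tangent-obstruction theory (e.g.\ as in \cite{MR2583634}). The plan is to invoke these in turn and verify that the notational translations go through.

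First, for the tangent space: I would apply Proposition \ref{first order deformations} directly, which identifies the set of first-order deformations of $\II$ with $\Hom_{\Omega}(\II, \Omega/\II)$. To confirm this is the tangent space in the torsor sense of the general framework, I would specialize the pseudotorsor action of Proposition \ref{def-inf} to the small extension $0 \to \C \to \C[\varepsilon] \to \C \to 0$ with $A = \C$, $J = \C\varepsilon$, $\II_A = \II$: the associated group is $\Hom_\Omega(\II, \C\varepsilon \otimes_\C \Omega/\II) = \Hom_\Omega(\II, \Omega/\II)$, matching the explicit bijection from Proposition \ref{first order deformations} (and the trivial deformation provides a distinguished base point).

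Second, for the obstructions: I would apply Proposition \ref{obst} with $A = \C$ and any small extension $0 \to J \to B \to \C \to 0$, yielding an obstruction class in $\Ext_{\Omega_A}(\II_A, J\otimes_A \Omega_A/\II_A) = \Ext_{\Omega}(\II, J\otimes_\C \Omega/\II)$ that vanishes precisely when a lift exists. By $\C$-linearity in $J$, this may be rewritten as an element of $J \otimes_\C \Ext^1_\Omega(\II, \Omega/\II)$, which identifies the obstruction module as claimed.

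The main (and really only) subtlety is ensuring that the symbol $\Ext_{\Omega_A}$ used in Proposition \ref{obst} really denotes the first derived functor $\Ext^1$, since what was actually constructed there was a short exact sequence $0 \to J \otimes_A \Omega_A/\II_A \to M \to \II_A \to 0$ in the abelian category $\Omega_A$-dgmod. Because $\Omega_A$-dgmod is abelian with enough injectives (as recalled at the start of Section 1, citing \cite{MR0310287}), the usual Yoneda identification between equivalence classes of such one-step extensions and the derived functor $\Ext^1$ applies verbatim; hence the obstruction class genuinely lies in $\Ext^1_\Omega(\II, \Omega/\II)$. Once this identification is recorded, the corollary follows immediately from the two preceding propositions.
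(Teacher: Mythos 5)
There is a genuine gap: you invoke Propositions \ref{def-inf} and \ref{obst} only with $A=\C$, but the corollary asserts a tangent-obstruction theory in the sense of \cite{MR2222646}, which requires, for \emph{every} small extension $0\to J\to B\to A\to 0$ with $A$ an arbitrary local Artinian $\C$-algebra and every deformation $\II_A$ over $A$, that the set of lifts to $B$ be a pseudotorsor under $\Hom_{\Omega}(\II,\Omega/\II)\otimes_{\C}J$ with an obstruction class in $\Ext^1_{\Omega}(\II,\Omega/\II)\otimes_{\C}J$. Your obstruction check for $A=\C$ is in fact vacuous: the trivial deformation $\II\otimes_{\C}B\subset\Omega_B$ always lifts $\II$ over any such $B$ (flatness of $(\Omega/\II)\otimes_{\C}B$ is automatic), so the obstruction there is always zero; the entire content of an obstruction theory lies in the case of nontrivial Artinian $A$, which is exactly what the later smoothness conclusion via \cite[Cor. 6.2.5]{MR2222646} needs.

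Consequently your plan omits the heart of the paper's proof: the base-change identifications, for general Artinian $A$,
\[
\Hom_{\Omega_A}(\II_A,J\otimes_A\Omega_A/\II_A)\cong \Hom_{\Omega}(\II,\Omega/\II)\otimes_{\C}J,
\qquad
\Ext_{\Omega_A}(\II_A,J\otimes_A\Omega_A/\II_A)\cong \Ext^1_{\Omega}(\II,\Omega/\II)\otimes_{\C}J,
\]
which the paper takes from \cite[Prop.C3.1(ii)]{MR1863391} (``taking $J$ out'') and spells out in Remark \ref{artin}: one uses that $J$ is a $\C$-vector space to get $J\otimes_A\Omega_A/\II_A = J\otimes_{\C}\Omega/\II$, that $\II_A\otimes_{\Omega_A}\Omega=\II$ via flatness of $\Omega_A/\II_A$, and the corresponding (nontrivial) base-change statement for $\Ext^1$. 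Without these, Propositions \ref{def-inf} and \ref{obst} only produce torsor and obstruction modules depending on $A$, not the two fixed $\C$-vector spaces $\Hom_{\Omega}(\II,\Omega/\II)$ and $\Ext^1_{\Omega}(\II,\Omega/\II)$ that the statement requires. On the positive side, your identification of the Yoneda group $\Ext_{\Omega}$ with the derived functor $\Ext^1_{\Omega}$, via the abelian structure and enough injectives in $\Omega_X$-dgmod, is correct and is precisely the point the paper also records (Remark \ref{artin}, citing \cite[Ch.2, Th.2.4]{MR0346025}); your $A=\C$ tangent-space computation is likewise correct, but it covers only the first-order part of the statement.
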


\begin{proof}
We refer to \cite{MR2222646}, Chapter 6, for tangent-obstruction theories. 
Our statement then follows from Propositions \ref{first order deformations} to \ref{commute ext}.
\end{proof}

\medskip

\

\begin{remark} \label{smaller obstruction}
Notice in the proof of Proposition \ref{obst} that
\[
\Omega_B\cong\Omega_A\oplus J\otimes_A\Omega_A
\]
as $A$-modules. Then,
\[
\ker(\beta)=\{\omega+j\eta\,\colon\,\overline{\omega}=0\}=
\II_A\oplus J\otimes_A\Omega_A,\quad
\im(\alpha)=0\oplus J\otimes_A\II_A
\]
and as $A$-modules we have,
\[
M=\ker(\beta)/\im(\alpha)\cong \II_A\oplus J\otimes_A\Omega_A/\II_A.
\]
Then, $ob(e)$ splits in $\Ext_{A}(\II_A,J\otimes_A \Omega_A/\II_A)$
and then,
\[
ob(e)\in H^1(\mathcal{H}omgr_{\Omega_X^\sharp}(\II_A,J\otimes_A \Omega_A/\II_A),d).
\]
Hence in Corollary  \ref{tangent obstruction} we may replace $\Ext^1_{\Omega}(\II,\Omega/\II)$ 
by the subspace 
\[
H^1(\mathcal{H}omgr_{\Omega_X^\sharp}(\II, \Omega_X/\II),d)
\]
as an alternative smaller obstruction theory; see Proposition \ref{torsion-map}.
\end{remark}

\

\begin{remark}\label{artin}
Proposition \ref{def-inf} and Proposition \ref{obst} are particular
cases of the general theory of abstract Hilbert 
schemes, \cite[Prop.E2.4]{MR1863391}. 
The authors in \cite{MR1863391} define a deformation functor $F$ and compute its 
tangent-obstruction theory. 
Using the notation
$R_0=\mathbb{C}$, $R=A$, $R'=B$, 
${C}_{R_0}=\Omega_X$-dgmod, 
${C}_{R}=\Omega_{X_A|A}$-dgmod,
${C}_{R'}=\Omega_{X_B|B}$-dgmod,
$U_0=\II$, $U=\II_A$, 
$V_0=\Omega/\II$ and $W_0=J\otimes_{\mathbb{C}}\Omega/\II$,
the authors showed that the space of deformations of $\II_A$ over $B$ is 
\[
\Hom_{C_{R_0}}(U_0,W_0) = 
\Hom_{\Omega}(\II,J\otimes_{\mathbb{C}}\Omega/\II)
\]
and that the obstruction space is
\[
\Ext^1_{C_{R'}}(U,W_0).
\] 
If $J\cdot U=0$, it follows 
from 
\cite[Prop.C7.1]{MR1863391} that $U$ is also $R'$-flat and this implies from 
\cite[Prop.C3.1(v)]{MR1863391} the equality 
\[
\Ext^1_{C_{R'}}(U,W_0) = \Ext^1_{C_{R_0}}(U_0,W_0).
\]
Indeed, by hypothesis the sequence obtained by applying $-\otimes_{R'} U$ to $0\to J\to R'\to R\to 0$ is exact and then, by \cite[Prop.C7.1]{MR1863391}, $U$ is $R'$-flat. Now, apply \cite[Prop.C3.1(v)]{MR1863391} to get
\[
\Ext^1_{C_{R_0}}(U\otimes_{R'}R_0,W_0)=\Ext^1_{C_{R'}}(U,W_0).
\]
Finally, 
\[
\Ext^1_{C_{R_0}}(U_0,W_0)=\Ext^1_{\Omega}(\II,J\otimes_{\C}\Omega/\II).
\]

\qed
\end{remark}

\

\begin{definition} \label{def hilb}
Let us denote by $\Hilb(\Omega_X)$ the 
moduli space of differential ideals $\II\subseteq\Omega_X$
on $X$. Points of $\Hilb(\Omega_X)$ are equivalence classes of
pairs $({F},q)$ such that 
$q:\Omega_X\to{F}$ is a surjective $\Omega_X$-linear
map of $\Omega_X$-Modules.
The equivalence class $({F},q)\sim ({F}',q')$ 
is given by an isomorphism $p:{F}\to{F}'$ such that
$pq=q'$ (equivalently, if $\ker(q)=\ker(q')$). The scheme 
$\Hilb(\Omega_X)$ is locally noetherian and represents
a subfunctor of $\Quot_{\Omega_X}$.

\

\noindent
We use the notation $\II \in \Hilb(\Omega_X)$ to denote
the equivalence class of $(\Omega_X/\II,\pi)$ where $\pi$ is
the quotient map $\pi:\Omega_X\to\Omega_X/\II$.
\end{definition}

\newpage

\section{Twisted exterior differential ideals.}

\

\noindent
In this section we develop an alternative way of dealing with twisted differential forms on a projective variety, based on graded modules and homogeneous coordinates.

\

\noindent
A twisted one-form is a section $w \in H^0(X, \Omega^1_X \otimes L)$ where $L$ is a line bundle on $X$. It corresponds to a homomorphism of $\OO_X$-Modules $\omega: L^{-1} \to \Omega^1_X$. 
Denote $\II^1\subset \Omega^1_X$ the image of $\omega$ and $\II = \langle\omega\rangle$ the ideal of $\Omega_X$ generated by $\II^1$ as in \ref{pfaff ideals} 
Notice that $\II^1 \cong L^{-1}$ as $\OO_X$-Modules. 

\

\noindent
Let $\omega$ be a polynomial differential $k$-form in $\C^{n+1}$, homogeneous of degree $e$. Assume $\omega$ descends to $\PP^n$, that is, the contraction with the radial vector field $i_R(\omega)$ vanishes. Hence $\omega$ is a global section of $ \Omega^1_{\PP^n}(e)$. The exterior derivative $d \omega$ is a differential $k+1$-form in $\C^{n+1}$, homogeneous of degree $e$. But $d \omega$ does not descend to $\PP^n$. In fact, $i_R(d \omega) =  d i_R(\omega) + i_R(d \omega) = L_R(\omega) = e \ \omega \ne 0$.

\

\noindent
In this section we propose a framework to work with twisted differential forms over a projective algebraic variety, via the second multiplication of twisted differential forms  \cite{MR2394437}. 

\

\noindent
Let $X$ be a smooth irreducible projective algebraic variety over the complex numbers, with a very ample invertible sheaf ${L}$.
We assume specifically that $X\subseteq\mathbb{P}^n$ and
${L}=\OO_X(1)$.
The \emph{sheaf of algebras of twisted differential forms} is
the bigraded sheaf of algebras $\BB=\BB_{X}$ given by
\[
\BB_X=\bigoplus_{k,e} \Omega_X^k(e).
\]
where $k, e \in \Z$, $0 \le k \le \dim X$.
The \emph{algebra of twisted differential
forms}, $B=B_X$, is
\[
B_X=\bigoplus_{k,e} H^0(X,\Omega_X^k(e)).
\]
In $\BB_X$ (and on $B_X$) there are two different algebra structures.
The first one is given by the wedge product and
the second is called the \emph{second multiplication},
\[
*:\Omega_X^k(e)\times \Omega_X^{k'}(e')\to
\Omega_X^{k+k'+1}(e+e').
\]

\

\noindent
Given $\omega \in \Omega_X^k(e)$ and 
$\eta \in \Omega_X^{k'}(e')$, $\omega * \eta$ is defined, in $\C^{n+1}$, as
\[
\begin{cases}
\frac{e}{e+e'}\omega\wedge d\eta-(-1)^k \frac{e'}{e+e'}d\omega\wedge \eta&
\text{if }e,e'\neq 0.\\
\hfill 0\hfill 
&\text{if }e=0\text{ or }e'=0.
\end{cases}
\]

\

\noindent
It is proved in \cite[Ch. 2, \S 4B, Prop. 4.4]{MR2394437}
that the second multiplication is well defined ($\omega * \eta$ descends to $\PP^n$), and it is associative 
and graded commutative, 
\[
\omega *\eta=(-1)^{(k+1)(k'+1)}\eta*\omega.
\]

\

\noindent
The following relation (similar to Leibnitz rule) holds for 
$\omega\in\Omega_X^{k}(e)$,
$\eta\in\Omega_X^{k'}(e')$ and 
$\mu\in\Omega_X^{k''}(e'')$,
\[
\omega*(\eta\wedge\mu)= 
\frac{e+e'}{e+e'+e''}(\omega*\eta)\wedge \mu
+(-1)^{k'k''}\frac{e+e''}{e+e'+e''}(\omega*\mu)\wedge\eta.
\]

\

\noindent
A $\BB_X$-Module is a bigraded $\OO_X$-Module
${F}=\oplus_{k,e}{F}^{k}(e)$ with two
module structures with respect to $\wedge$ and to $*$,
\[
\wedge:\Omega_X^{k}(e)\otimes_{\OO_X}{F}^{k'}(e')
\to{F}^{k+k'}(e+e'),\quad
*:\Omega_X^{k}(e)\otimes_{\OO_X}{F}^{k'}(e')
\to{F}^{k+k'+1}(e+e')
\]
satisfying the compatibility condition for 
$\omega\in\Omega_X^{k}(e)$,
$\eta\in\Omega_X^{k'}(e')$ and 
$f\in{F}^{k''}(e'')$,
\[
\omega*(\eta\wedge f)=
\frac{e+e'}{e+e'+e''}(\omega*\eta)\wedge f
+(-1)^{k'(k+1)}\frac{e+e''}{e+e'+e''}\eta\wedge(\omega*f).
\]

\

\noindent
We assume that ${F}^{k}=0$ for $k <0$ and $k>>0$.
A $\BB_X$-homomorphism $f:{F}\to{G}$ between two $\BB_X$-Modules
is $\OO_X$-linear and respects the bigrading, $\wedge$ and $*$.
The sheaf of $\BB_X$-homomorphisms between ${F}$ and ${G}$
is a sheaf of $\C$-vector spaces and is denoted 
\begin{equation}
\mathcal{H}om_{\BB_X}({F},{G}) \label{hom sobre omega}.
\end{equation}

\

\noindent
The category of $\BB^\sharp_X$-Modules is the category of 
$\wedge$-coherent bigraded sheaves ${F}
=\bigoplus_{k,e}{F}^{k}(e)$, where
${F}^{k}(e)$ is a $\OO_X$-Module and
\[
\wedge:\Omega_X^{k}(e)\otimes_{\OO_X}{F}^{k'}(e')
\to{F}^{k+k'}(e+e').
\]
The set of $\BB^\sharp_X$-linear maps
is bigraded and a $\BB^\sharp_X$-Module,
\[
\mathcal{H}omgr_{\BB^{\sharp}_X}({F},{G})=
\bigoplus_{k,e}
\mathcal{H}om_{\BB^{\sharp}_X}({F},{G}[k, e]),
\]
where $\mathcal{H}om_{\BB^{\sharp}_X}({F},{G}[k, e])$
is the $\OO_X$-Module of maps of bidegree $(k,e)$, 
\[
\varphi(\omega\wedge f)=(-1)^{kk'}\omega\wedge\varphi(f),\quad
\varphi\in\mathcal{H}om_{\BB^{\sharp}_X}({F},{G}[k, e]),
\omega\in\Omega_X^{k'}(e'),f\in{F}.
\]
Notice that any $\BB_X$-Module is a $\BB^\sharp_X$-Module and
$\mathcal{H}om_{\BB_X}({F},{G})
\subseteq
\mathcal{H}om_{\BB^{\sharp}_X}({F},{G})$.

\

\noindent
If ${F}$ and ${G}$ are $\BB_X$-Modules it is possible
to give a $*$-structure to $\mathcal{H}omgr_{\BB^{\sharp}_X}({F},{G})$,
\[
(\omega*\varphi)(f):=\omega*_{{G}}\varphi(f)-(-1)^{(k+1)(k'+1)}\varphi(\omega*_{{F}}f),
\]

for $\varphi\in\mathcal{H}om_{\BB^{\sharp}_X}({F},{G}[k, e]),
\omega\in\Omega_X^{k'}(e'),f\in{F}.$

\

\noindent
Then, $\varphi$ is $*$-linear if and only if $\omega*\varphi=0$ for all $\omega\in\BB_X$.

\

\noindent
Another fundamental fact is that we can define a tensor product in $\BB_X$-mod
by defining a $*$-structure in the tensor product in $\BB^{\sharp}_X$-mod,
\[
{F}\otimes_{\BB_X}{G}:=
({F}\otimes_{\BB^{\sharp}_X}{G},*),
\]
where for $\omega\in\Omega_X^{k}(e),\,f\in{F}^{k'}(e'),\,
g\in{G}^{k''}(e'')$,
\[
\omega*(f\otimes g):=\frac{e+e'}{e+e'+e''}(\omega*f)\otimes g
+(-1)^{(k+1)k'}
\frac{e+e''}{e+e'+e''}
f\otimes(\omega*g).
\]

\

\begin{definition}
Let $X$ be a smooth projective variety.
The \emph{category $B_X$-mod} is defined as the category of
equivalence classes of 
bigraded vector spaces 
$$F=\Gamma_{*}({F}) = \bigoplus_{k,e} H^0(X, {F}^{k}(e))$$ 
where
${F}$ is a $\BB_X$-Module.
The equivalence is given by 
$H^0(X,{F}^k(e))= H^0(X,{G}^k(e))$
for $e>>0$ and every $k$.

\

\noindent
A \emph{twisted differential ideal} $I\subseteq B_X$ is a
$B_X$-submodule of $B_X$ in $B_X$-mod.
\[
I=\bigoplus_{k,e} I^{k,e}\subseteq B,\quad 
I^{k,e}\subseteq H^0(X,\Omega^k(e)).
\]
\end{definition}

\

\begin{remark}[Interpretation of the second multiplication] 
We follow \cite[Ch. 2, \S 4D]{MR2394437}.
Let $X$ be a smooth projective algebraic variety and 
let $x\in H^0(X,\OO_X(1))$.
Consider the affine open subset $U=X- Z$, where $Z=\{x=0\}$ 
and $j:U\hookrightarrow X$ is the inclusion.
Then we have a commutative
diagram of sheaves on $X$,
\[
\xymatrix{
\Omega_X^k(e)\ar[r]^{=}\ar[d]^{*x}&\Omega_X^k(e\cdot Z)\ar[d]^d\\
\Omega_X^{k+1}(e+1)\ar[r]^<<<<{=}&\Omega_X^{k+1}((e+1)\cdot Z)
}
\]
The sheaf $\Omega_X^k(e\cdot Z)$ if the sheaf of meromorphic $k$-forms
on $X$ which are regular outside $Z$ and have poles of order $\le e$
along $Z$. That is, if $\omega$ is a section
of $\Omega_X^k(e\cdot Z)$, then $x^e\omega$ is regular. Also, 
the sheaf $j_{*}\Omega^k_U$ is the sheaf of 
of meromorphic $k$-forms
on $X$, regular outside $Z$ and having poles along $Z$ of any order,
$j_{*}\Omega^k_U=\Omega_X^k(*Z)$.
\qed
\end{remark}

\

\begin{theorem}
Let $X$ be a smooth projective variety.
The category $\Omega_X$-dgmod is equivalent to the category
$B_X$-mod,
\[
\Gamma_{*}:\Omega_X\text{-dgmod}\to B_X\text{-mod},\quad 
\bigoplus_{k}{F}^k\mapsto\bigoplus_{e,k} H^0(X,{F}^k(e)).
\]
\end{theorem}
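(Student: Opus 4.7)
The plan is to extend the classical Serre correspondence between coherent sheaves on a projective variety and graded modules over the homogeneous coordinate ring, promoting it to the differential/second-multiplication setting. Since the Serre tilde construction already gives an equivalence between $\wedge$-coherent bigraded $\OO_X$-Modules and graded $S$-modules (up to the stated equivalence), the real task is to show that the differential structure on the sheaf side corresponds bijectively to the $\ast$-structure on the module side.

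First, I would check that $\Gamma_{\ast}$ is well-defined. Given a differential graded $\Omega_X$-Module ${F}$, the graded global sections $\bigoplus_{k,e}H^0(X,{F}^k(e))$ inherit the $\wedge$-action from ${F}$, and we define a $\ast$-multiplication on them by the same formula that defines $\ast$ on $\BB_X$, namely
\[
\omega \ast \eta = \tfrac{e}{e+e'}\, \omega\wedge d_{{F}}\eta - (-1)^k \tfrac{e'}{e+e'}\, d_{{F}}\omega\wedge\eta
\]
for $e,e'\neq 0$, using the given differential $d_{{F}}$. The axioms of a $B_X$-module (associativity, graded commutativity of $\ast$, compatibility between $\wedge$ and $\ast$) then follow from $d_{{F}}^2=0$ and the graded Leibniz rule for $d_{{F}}$, by exactly the same computation as in \cite{MR2394437} for the universal case ${F}=\Omega_X$.

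Second, I would construct a quasi-inverse. Given a $B_X$-module $M$, apply the classical Serre construction bidegree by bidegree to produce a $\BB_X^\sharp$-Module $\widetilde{M}$. The $\wedge$-action on $M$ sheafifies to a $\BB_X^\sharp$-structure. The key step is extracting a differential $d$ on $\widetilde{M}$ from the $\ast$-structure: locally on an affine $D_{+}(x)$ with $x\in H^0(X,\OO_X(1))$, the formula
\[
\omega\ast x = \tfrac{e}{e+1}\,\omega\wedge dx - \tfrac{(-1)^k}{e+1}\, d\omega\wedge x
\]
determines $d\omega\wedge x$, hence $d\omega$ on $D_{+}(x)$, from the known quantities $\omega\ast x$ and $\omega\wedge dx$. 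Because $\OO_X(1)$ is generated by global sections, varying $x$ and gluing gives a globally defined operator $d$ on $\widetilde{M}$. One then verifies that $d$ so defined is independent of $x$, satisfies $d^2=0$ and the Leibniz rule, and promotes $\widetilde{M}$ to an object of $\Omega_X$-dgmod; this reduces to an identity among expressions in $\ast$ which follows from associativity and graded commutativity of the $\ast$-structure.

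Finally, the two natural transformations composing the functors are isomorphisms: on the underlying bigraded $\OO_X$-Modules this is Serre's theorem, and the compatibility of differentials is precisely the content of the recovery formula for $d\omega$ above, which is inverse to the defining formula for $\ast$. The main obstacle is the middle step: showing that the differential produced locally from $\ast$ is well-defined globally and satisfies $d^2=0$ and Leibniz; special care is needed at low twists $e=0$ or $e'=0$, where the formula for $\ast$ degenerates, but this is precisely why the equivalence in $B_X$-mod is taken only up to equality in large degrees $e\gg 0$, so these edge cases do not obstruct the argument.
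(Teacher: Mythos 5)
Your proposal is correct and takes essentially the same approach as the paper: it reduces the bigraded ($\sharp$-level) equivalence to the Serre correspondence, defines the $*$-product on global sections by the same formula involving $d_F$, and recovers the differential on the sheaf side from the action of $x\in H^0(X,\OO_X(1))$ on $D_{+}(x)$ --- your inversion of the formula for $\omega * x$ is exactly the paper's definition $d(f/x^e):=(e+1)\,(x*f)/x^{e+1}$. The verifications you flag (independence of $x$, gluing, $d^2=0$, Leibniz, and the degenerate twists $e=0$ handled via the equivalence in degrees $e\gg 0$) are precisely the steps the paper leaves implicit.
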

\begin{proof}
Clearly $\Gamma_{*}$ gives an equivalence between 
$\Omega_X^{\sharp}$-mod and $B_X^{\sharp}$-mod. Also, 
it sends $(\Omega_X,d)$ to the $B_X$-Module $B_X$.

\noindent
For ${F}$ be a differential graded $\Omega_X$-Module, $\omega\in H^0(X,\Omega^k_X(e))$
and $f\in H^0(X,{F}^{k'}(e'))$, define
\[
\omega*f:=\begin{cases}
\frac{e}{e+e'}\omega\wedge df-(-1)^k \frac{e'}{e+e'}d\omega\wedge f&
\text{if }e,e'\neq 0,\\
\hfill 0\hfill 
&\text{if }e=0\text{ or }e'=0.
\end{cases}
\]
It is easy to check $\omega*f$ is well defined because
the expression $\omega*f$ is homogeneous in the affine cone of $X$.
Then $\Gamma_{*}({F})$ is a $B_X$-Module.
Assume now that $F$ is $B_X$-Module. Then, there exists
a $\Omega_X^{\sharp}$-Module ${F}$ such that 
$F=\Gamma_{*}({F})$. Let us prove that 
${F}$ is a differential graded $\Omega_X$-Module.
Let $x\in H^0(X,\OO_X(1))$ and let $U=\{x\neq 0\}$,
\[
d\left(\frac{f}{x^e}\right):=(e+1)\frac{x*f}{x^{e+1}},\qquad \frac{f}{x^e}\in
{F}^k(U).
\]
Then $({F},d)$ is in $\Omega_X$-dgmod.
Using the $\mathcal{H}omgr$ construction, it follows 
that $\Gamma_{*}$ sends a map commuting with $d$ to a map
commuting with $x*-$ for all $x\in H^0(X,\OO_X(1))$. 
\end{proof}

\medskip

\subsection{Deformations of twisted exterior differential ideals.}

\

\begin{proposition}\label{tan-ob-tds}
Let $I\subseteq B_X$ be 
a twisted differential ideal over a projective algebraic variety $X$. Then, 
the tangent-obstruction theory associated to deformations of $I$
are 
\[
\Hom_{B_X}(I,B_X/I),\quad
\Ext^1_{B_X}(I,B_X/I).
\]
\end{proposition}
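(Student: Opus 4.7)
The plan is to leverage the equivalence of categories $\Gamma_{*}:\Omega_X\text{-dgmod}\to B_X\text{-mod}$ established in the preceding theorem, thereby reducing the statement to Corollary \ref{tangent obstruction}. First I would set up the correspondence on deformations: given a twisted differential ideal $I\subseteq B_X$, let $\II\subseteq\Omega_X$ be the corresponding differential graded ideal under $\Gamma_{*}^{-1}$. Because $\Gamma_{*}$ is an equivalence of abelian categories (and in particular preserves short exact sequences and flatness conditions coming from the base ring), first order deformations of $I$ inside $B_X$ (as $B_X$-submodules whose quotient is flat over $\C[\varepsilon]$ and restricts to $B_X/I$) correspond bijectively to first order deformations of $\II$ inside $\Omega_X$, and similarly for deformations over a local Artin ring $A$.

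Next I would identify the tangent and obstruction spaces on both sides. Under the equivalence, $\Hom_{\Omega_X}(\II,\Omega_X/\II)$ is naturally identified with $\Hom_{B_X}(I,B_X/I)$, since $\Gamma_{*}$ is fully faithful on morphisms (morphisms of $\Omega_X$-dg-modules become morphisms compatible with $*$-multiplication and hence $B_X$-linear, and vice versa as shown via the construction $d(f/x^e) = (e+1)(x*f)/x^{e+1}$ in the proof of the equivalence). The same reasoning applied to injective resolutions, combined with the fact that an equivalence of abelian categories preserves derived functors, gives $\Ext^1_{\Omega_X}(\II,\Omega_X/\II)\cong\Ext^1_{B_X}(I,B_X/I)$.

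Finally, I would invoke Proposition \ref{first order deformations}, Proposition \ref{def-inf}, Proposition \ref{obst} and Corollary \ref{tangent obstruction} for $\II$, transporting the statements across the equivalence. Concretely, the bijection between deformations of $\II_A$ over $B$ and elements of $\Hom_{\Omega}(\II,\Omega/\II)\otimes_{\C}J$, together with the obstruction class in $\Ext^1_{\Omega}(\II,\Omega/\II)\otimes_{\C}J$, translate via $\Gamma_{*}$ to the analogous statement for $I$, yielding the desired tangent-obstruction theory.

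The main obstacle I anticipate is a careful verification that the equivalence $\Gamma_{*}$ behaves well in families, i.e.\ that it extends compatibly to the categories of sheaves on the trivial deformations $X_A$ and to differential ideals on $X_B$. This requires checking that flatness over the Artin base $A$ is preserved in both directions and that the explicit formulas for deformations (such as $\II\times_\varphi\Omega$ in \ref{deformation formula}) go over to analogous constructions in $B_X$-mod, a point that is essentially formal once the equivalence is known but must be spelled out to obtain a genuine identification of deformation functors rather than just of abstract tangent-obstruction spaces.
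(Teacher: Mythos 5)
Your proposal is correct, but it takes a different route from the paper. The paper's own proof is deliberately terse: it says to repeat, verbatim in the $B_X$-setting, the explicit diagram arguments of Propositions \ref{first order deformations}, \ref{def-inf} and \ref{obst} (lifting generators, the pseudotorsor action, the obstruction class $ob(e)$), or alternatively to invoke the general theory of abstract Hilbert schemes of \cite{MR1863391}, whose axioms the category $B_X$-mod satisfies just as $\Omega_X$-dgmod does. You instead transport the already-established result (Corollary \ref{tangent obstruction}) across the equivalence $\Gamma_{*}:\Omega_X\text{-dgmod}\to B_X\text{-mod}$. Both are legitimate: your transport argument buys economy --- no re-derivation of the diagrams, and the identifications $\Hom_{\Omega_X}(\II,\Omega_X/\II)\cong\Hom_{B_X}(I,B_X/I)$ and $\Ext^1_{\Omega_X}(\II,\Omega_X/\II)\cong\Ext^1_{B_X}(I,B_X/I)$ are automatic since an equivalence of abelian categories is fully faithful and preserves Yoneda $\Ext$ (which by Remark \ref{artin} agrees with the derived-functor $\Ext^1$). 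What it costs is exactly the point you flag at the end: the equivalence must be checked to be compatible with Artinian base change, i.e.\ that $A$-linear versions of the two categories correspond, that flatness of $\Omega_A/\II_A$ over $A$ matches flatness of the corresponding $B$-side quotient, and that constructions such as $\II\times_\varphi\Omega$ in \ref{deformation formula} go over --- this is formal since $\Gamma_{*}$ is $\C$-linear and exact (exactness being built into the Serre-type equivalence relation defining $B_X$-mod, so twists $e\gg 0$ cause no loss), but it is precisely the step the paper's ``same as before'' sidesteps by re-running the computation directly. One minor mismatch worth noting: the equivalence theorem is proved for $X$ smooth projective with a fixed very ample $\OO_X(1)$, so your reduction inherits that hypothesis, whereas redoing the explicit computations in $B_X$-mod (the paper's stated option) does not pass through the equivalence at all.
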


\begin{proof}
Same as before using the explicit computations or the general theory 
of abstract Hilbert schemes.
\end{proof}

\medskip

\begin{theorem}
Let $X$ be a projective algebraic variety and let $\II \in \Hilb(\Omega_X)$. Denote $I=\Gamma_{*}(\II)$.
If $\Ext^1_{B_X}(I, B_X/I)=0$ then $\II$ is a non-singular point of $\Hilb(\Omega_X)$.
\end{theorem}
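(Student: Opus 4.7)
The plan is to string together three facts already established: (i) the tangent–obstruction theory for deformations of a differential ideal, (ii) the equivalence $\Gamma_{*}\colon \Omega_X\text{-dgmod}\to B_X\text{-mod}$, and (iii) the standard principle that vanishing of the obstruction space implies formal smoothness of the moduli functor at the given point.

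First I would invoke Corollary \ref{tangent obstruction}, which identifies $\Ext^1_{\Omega}(\II,\Omega_X/\II)$ as an obstruction space for deformations of $\II$ as a differential graded ideal: given any small extension $0\to J\to B\to A\to 0$ of local Artinian $\C$-algebras and any deformation $\II_A$ of $\II$ over $A$, Proposition \ref{obst} (combined with the ``$J$ out'' calculation in the proof of Corollary \ref{tangent obstruction}) produces an element
\[
ob(e)\in \Ext^1_{\Omega}(\II,\Omega_X/\II)\otimes_{\C} J
\]
whose vanishing is equivalent to the existence of a lift $\II_B$ of $\II_A$ to $B$.

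Next I would transfer this obstruction to the twisted side. The theorem preceding this one establishes that $\Gamma_{*}$ is an equivalence of categories from $\Omega_X$-dgmod to $B_X$-mod, sending $\II$ to $I$ and $\Omega_X/\II$ to $B_X/I$. Since an equivalence of abelian categories preserves $\Hom$ and $\Ext^1$, we obtain
\[
\Ext^1_{\Omega}(\II,\Omega_X/\II)\;\cong\;\Ext^1_{B_X}(I,B_X/I)\;=\;0
\]
by hypothesis. Combined with Corollary \ref{tangent obstruction}, this forces $ob(e)=0$ for every small extension $e$ and every deformation $\II_A$ of $\II$ over $A$; hence every infinitesimal deformation lifts through every small extension.

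Finally I would apply the standard smoothness criterion: a functor of Artin rings with a tangent–obstruction theory in which the obstruction space vanishes is formally smooth at the given point (see, e.g., \cite{MR1863391}, \cite{MR2583634}). Since $\Hilb(\Omega_X)$ (Definition \ref{def hilb}) represents the deformation functor of $\II$ in a neighborhood of $\II$, and $\Hilb(\Omega_X)$ is locally noetherian, formal smoothness at $\II$ is equivalent to $\II$ being a non-singular (smooth) point of $\Hilb(\Omega_X)$, which is what we wished to prove.

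The only subtle step is the identification $\Ext^1_{\Omega}(\II,\Omega_X/\II)\cong \Ext^1_{B_X}(I,B_X/I)$, which requires knowing that the equivalence $\Gamma_{*}$ preserves $\Ext^1$; this is automatic for any equivalence of abelian categories, so even this step is routine. The remainder is a direct citation of the tangent–obstruction machinery already assembled in Sections 2 and 3.
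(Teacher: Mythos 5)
Your proposal is correct and follows essentially the same route as the paper, whose entire proof is to cite Proposition \ref{tan-ob-tds} (the tangent--obstruction theory $\Hom_{B_X}(I,B_X/I)$, $\Ext^1_{B_X}(I,B_X/I)$ for deformations of $I$) together with the standard smoothness criterion of \cite[Cor. 6.2.5]{MR2222646}. The only difference is cosmetic: you obtain the $B_X$-side obstruction space by transporting $\Ext^1_{\Omega}(\II,\Omega_X/\II)$ through the equivalence $\Gamma_{*}$, which is exactly the identification the paper leaves implicit when it proves Proposition \ref{tan-ob-tds} ``same as before.''
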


\begin{proof}
From Proposition \ref{tan-ob-tds},
the deformation functor associated to $I$ has a 
tangent-obstruction theory given
by $\text{Hom}_{B_X}(I,B_X/I)$
and $\Ext^1_{B_X}(I,B_X/I)$.
The result follows from \cite[Cor. 6.2.5]{MR2222646}.
\end{proof}

\newpage

\section{Torsion, depth and saturation.}

\

\noindent
Here we recall some known  facts regarding normal and reflexive coherent Modules.
We shall state the results for a coherent sheaf $G$ on an algebraic variety $X$, following  \cite{MR597077}.
Similar results hold if $G$ is an analytic coherent sheaf on a complex manifold $X$; for this case one may refer to  \cite{okonek1980vector}. 
See also  \cite{bourbaki2007algebre}, Chap. VII, \S 4, and \cite{stacks-project}, Section 15.23: Reflexive modules.

\

\begin{definition}\label{normal} 
Let $X$ be a smooth algebraic variety and $G$  a coherent $\OO_X$-Module.
One says that $G$ is \emph{normal} if for any open $V \subset X$ and any closed subvariety $A \subset V$ of codimension $\ge 2$,
the restriction map $G(V) \to G(V-A)$ is an isomorphism. 
\end{definition}

\begin{proposition}\label{reflexive normal} 
Let $X$ be a smooth algebraic variety and $G$  a coherent  $\OO_X$-Module.
Then $G$ is reflexive if and only if $G$ is torsion-free and normal.
\end{proposition}
\begin{proof}
Same proof as in  \cite{okonek1980vector}, Lemma 1.1.12.
\end{proof}

\

\begin{proposition}\label{torsion} 
Let $X$ be a smooth algebraic variety with $\dim(X) \ge 2$. 
Let $G$ be a coherent $\OO_X$-Module, with singular set $S$, as in Definition \ref{singular scheme}. Assume $\cod(S)\ge 2$.
Then the following conditions are equivalent:
\begin{enumerate}[label=(\arabic*)]
\item\label{h0} $\mathcal{H}^0_S(G)=0$.
\item\label{d1} $\depth_S(G)\geq 1$.
\item\label{s1} $G$ satisfies condition $S_1$ of Serre.
\item\label{tf} $G$ is torsion free.

\end{enumerate}
Assume also that $G$ is torsion-free. Then the following conditions are equivalent:
\begin{enumerate}[label=(\alph*)]
\item\label{H1} $\mathcal{H}^0_S(G)=\mathcal{H}^1_S(G)=0$.
\item\label{d2} $\depth_S (G) \geq 2$.
\item\label{s2} $G$ satisfies condition $S_2$ of Serre.
\item\label{re} $G$ is reflexive.
\item\label{normal} $G$ is normal.
\item\label{in} For any open $V \subset X$ 
 the natural map $G|_V \to \iota_{*}(G|_{V- S})$ is an isomorphism, where $\iota: V- S\to V$ is the inclusion.

\end{enumerate}

\end{proposition}
\begin{proof}  
These equivalences are well-known, see \cite{MR0224620}  and  \cite{MR597077}.
\end{proof}

\medskip

\begin{definition} \label{def saturation}   Let $\II \subset \Omega_X$ be a differential graded ideal.   
We denote   $\bar \II  \subset \Omega_X$ the double dual of $\II$ as an $\OO_X$-Module.
\end{definition}

\begin{proposition} \label{saturation1} With the notation above, $\bar \II$ is a reflexive differential graded ideal, as in Definition  \ref{def pfaff ideal}, e).
\end{proposition}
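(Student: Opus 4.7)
The plan is to realize $\bar\II$ as the pushforward $\iota_*(\II|_V)$ for a suitable open $V \subset X$ whose complement has codimension at least two, and then deduce both the ideal and differential conditions by working on $V$ and extending.

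First I would check that $\bar\II^r = (\II^r)^{\vee\vee}$ naturally embeds as a coherent subsheaf of $\Omega_X^r$. Since $\II^r \subset \Omega_X^r$ is a subsheaf of a locally free sheaf it is torsion-free, so the canonical map $\II^r \hookrightarrow \bar\II^r$ is injective. Its cokernel $\bar\II^r/\II^r$ is supported in codimension $\geq 2$: at any codimension one point of the smooth $X$ the local ring is a discrete valuation ring, over which a torsion-free module is free and hence already reflexive, so the stalks of $\II^r$ and $\bar\II^r$ coincide there. Consequently the natural map $\bar\II^r \to (\Omega_X^r)^{\vee\vee} = \Omega_X^r$ has kernel supported in codimension $\geq 2$, and since $\bar\II^r$ is reflexive, Proposition~\ref{torsion}(\ref{in}) forces this kernel to be zero; this realizes $\bar\II$ as a graded subsheaf of $\Omega_X$ containing $\II$.

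Next I would set $V := X - \bigcup_r \mathrm{Supp}(\bar\II^r/\II^r)$, so $\cod(X - V) \geq 2$ and $\bar\II^r|_V = \II^r|_V$. Applying Proposition~\ref{torsion}(\ref{in}) to the reflexive $\bar\II^r$ (note that $\Omega_X^r/\bar\II^r$ is $\OO_X$-torsion-free by the equivalence \ref{re}$\iff$\ref{tf} of that proposition), I obtain $\bar\II^r = \iota_*(\bar\II^r|_V) = \iota_*(\II^r|_V)$, as a subsheaf of $\Omega_X^r = \iota_*(\Omega_X^r|_V)$. With this description both structural conditions descend from the fact that $\II|_V$ is itself a differential graded ideal: for $\omega \in \Omega_X^s$ and $\alpha \in \bar\II^r$, one has $(\omega \wedge \alpha)|_V = \omega|_V \wedge \alpha|_V \in \II^{r+s}|_V$, hence $\omega \wedge \alpha \in \iota_*(\II^{r+s}|_V) = \bar\II^{r+s}$; and similarly $(d\alpha)|_V = d(\alpha|_V) \in \II^{r+1}|_V$, so $d\alpha \in \bar\II^{r+1}$. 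Reflexivity of each $\bar\II^r$ is immediate from its definition as a double dual on a smooth variety.

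The main delicate point is the compatibility in the first step: one must check that the reflexive hull $\bar\II^r$ embeds into $\Omega_X^r$ compatibly with the inclusion $\II^r \hookrightarrow \Omega_X^r$, so that the identification $\bar\II = \iota_*(\II|_V)$ as a subsheaf of $\Omega_X$ is legitimate and not merely an abstract isomorphism of coherent $\OO_X$-Modules. Once this is established, the ideal and derivation properties propagate essentially formally from $V$ to $X$ via $\iota_*$.
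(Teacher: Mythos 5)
Your proof is correct, but it takes a genuinely different route from the paper's. The paper argues purely algebraically on sections: it identifies $\bar \II$ with the torsion-saturation $\{\eta\in\Omega_X \,\colon\, f\eta\in\II \text{ for some } f\neq 0\}$ (the preimage of the $\OO_X$-torsion of $\Omega_X/\II$), from which homogeneity is immediate, and it gets $d$-stability by a one-line Leibniz trick: from $f\eta\in\II$ one computes $f\,d(f\eta)=df\wedge(f\eta)+f^{2}\,d\eta$, hence $f^{2}d\eta\in\II$ and $d\eta\in\bar\II$. You instead stay with the double dual itself, show $\bar\II^{\,r}/\II^{r}$ is supported in codimension $\geq 2$ by the DVR-stalk argument, realize $\bar\II=\iota_{*}(\II|_V)$, and transport $\wedge$ and $d$ across the codimension-two complement. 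Two comparative remarks. First, your two invocations of Proposition \ref{torsion}(\ref{in}) are slightly off-label, since that item is tied to the specific closed set $S=S(G)$ and $X-V$ need not be contained in it; the one-line repair is that reflexivity gives Serre's condition $S_2$ (item (\ref{s2})), hence $\depth_{X-V}(\bar\II^{\,r})\geq 2$ because $\cod(X-V)\geq 2$, and the local cohomology sequence \ref{local coho} then yields both injectivity of restriction and the extension property across $X-V$ --- the same mechanism the paper runs later in Proposition \ref{h0-depth} and Corollary \ref{normal}, so your argument fits naturally into its toolkit. Second --- and this is what your approach buys --- the identification of the double dual with the torsion-saturation is valid only when the torsion of $\Omega_X/\II$ is supported in codimension $\geq 2$: for the Pfaff ideal $\II=\langle x\,dy\rangle$ on $\C^{2}$ one has $\bar\II^{\,1}=x\OO_X\,dy=\II^{1}$, while the torsion-saturation of $\II^{1}$ is $\OO_X\,dy$, so the paper's proof, as written, tacitly excludes ideals whose quotient has divisorial torsion (a hypothesis made explicit only later, in Proposition \ref{saturation2}), whereas your pushforward argument proves the proposition for an arbitrary differential graded ideal. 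The price is length: the paper's computation is short, self-contained, and needs no codimension estimates --- and it does show in full generality that the torsion-saturation is a differential ideal, just not that it coincides with $\bar\II$.
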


\begin{proof}
First notice that, by Proposition \ref{torsion}, $\bar \II / \II$ is a torsion $\OO_X$-Module, supported on $S(\II)$. In fact,  $\bar \II / \II$ is the torsion of  $\Omega_X / \II$ as $\OO_X$-Module.
Or, $\bar \II = \{\eta \in \Omega_X/ \exists f \in \OO_X, f \eta \in \II \}$. This implies that $\bar \II$ is a homogeneous ideal of $\Omega_X$. 
To see that $\bar \II$ is stable under exterior derivative, 
let $\eta \in \bar \II(U)$ be a section of $\bar \II$ on an open $U$. As above, there exists $f \in \OO_X(U), f \ne 0,$ such that $f \eta \in \II(U)$. Applying $d$ and multiplying by $f$
we get $f d(f \eta) = f df \wedge \eta + f^2 d\eta \in  \II(U)$. Hence $f^2 d\eta \in \bar \II(U)$, and therefore $d\eta \in \bar \II(U)$, as claimed.
\end{proof}

\medskip

\noindent
The following Proposition allows one to calculate the saturation in some cases.

\begin{proposition} \label{saturation2}  
Let $\II$ be a singular integrable Pfaff ideal such that the singular locus $S(\II)$ has codimension at least two.
Denote $q = \rank (\II^1)$. Then
$$
\bar \II =
\{\theta\in\Omega_X / \ \theta\wedge\bigwedge^q\II^1=0\}.
$$
\end{proposition}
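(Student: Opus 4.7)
The plan is to set $W \subseteq \Omega_X$ equal to the graded subsheaf on the right-hand side and establish the two inclusions $\bar{\II} \subseteq W$ and $W \subseteq \bar{\II}$ separately.

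For the inclusion $\bar{\II} \subseteq W$, I would first show $\II \subseteq W$. Let $V \subseteq X$ be the dense open on which $\II|_V$ is integrable Pfaff, so $\II^r|_V = \Omega^{r-1}|_V \wedge \II^1|_V$. A local section of $\II^r|_V$ is then a sum of terms $\alpha \wedge \omega$ with $\omega \in \II^1$, and for any local section $s$ of $\bigwedge^q \II^1$, the product $\omega \wedge s$ is the image of an element of $\bigwedge^{q+1}\II^1$ in $\Omega^{q+1}_X$. Since $\II^1$ has generic rank $q$ while $\Omega^{q+1}_X$ is torsion free, this image vanishes. Hence $\theta \wedge s = 0$ on $V$ and, again by torsion freeness of $\Omega^{r+q}_X$, on all of $X$. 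Given $\eta \in \bar{\II}$, pick $f \in \OO_X \setminus \{0\}$ with $f\eta \in \II$; then $f(\eta \wedge s) = 0$ and torsion freeness yields $\eta \wedge s = 0$, proving $\bar{\II} \subseteq W$.

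For the reverse inclusion $W \subseteq \bar{\II}$, I would work on $U = X \setminus S(\II)$, whose complement has codimension at least two, and on which $\II^1|_U$ is a subbundle of $\Omega^1|_U$ of rank $q$. The key intermediate step is to promote integrability from $V$ to all of $U$. Consider
\[
\bar d : \II^1 \longrightarrow \Omega^2_X/(\Omega^1_X \wedge \II^1), \qquad \omega \mapsto d\omega \ \bmod\ \Omega^1_X \wedge \II^1.
\]
The Leibniz rule $d(f\omega) = df \wedge \omega + f\,d\omega$ combined with $df \wedge \omega \in \Omega^1_X \wedge \II^1$ shows $\bar d$ is $\OO_X$-linear. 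On $U$, the short exact sequence $0 \to \II^1|_U \to \Omega^1|_U \to (\Omega^1/\II^1)|_U \to 0$ of locally free sheaves identifies the target with the locally free sheaf $\bigwedge^2(\Omega^1/\II^1)|_U$; thus $\bar d|_U$ is an $\OO_U$-linear map of locally free sheaves that vanishes on the dense open $V \cap U$, hence on all of $U$. So $\II|_U = \Omega|_U \wedge \II^1|_U$. A standard local computation, picking a frame $\omega_1,\dots,\omega_q$ of $\II^1|_U$ and extending to a basis of $\Omega^1|_U$, identifies $W|_U$ with this same ideal, giving $W|_U = \II|_U = \bar{\II}|_U$ (the last equality because $\II|_U$ is locally free).

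To finish, $W/\bar{\II}$ is a coherent subsheaf of $\Omega_X/\bar{\II}$, which is torsion free by Proposition~\ref{torsion} applied to $0 \to \bar{\II} \to \Omega_X \to \Omega_X/\bar{\II} \to 0$ together with the reflexivity of $\bar{\II}$ from Proposition~\ref{saturation1}. Since $W/\bar{\II}$ vanishes on the dense open $U$, it is a torsion subsheaf of a torsion-free sheaf, hence zero, and $W = \bar{\II}$. I expect the main obstacle to be the extension of integrability from the auxiliary open $V$ to all of $U$: this hinges on the nontrivial observation that $\bar d$ is $\OO_X$-linear even though $d$ itself is not, and relies on the codimension-two hypothesis through the subbundle property of $\II^1|_U$, which provides the locally free target needed to propagate vanishing from $V \cap U$ to $U$.
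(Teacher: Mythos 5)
Your proof is correct, and it takes a genuinely different route from the paper's. The paper's proof is homological: it first shows in one line that $\Omega_X/W$ is torsion-free (if $f\theta\wedge\bigwedge^q\II^1=0$ with $f\neq 0$ then $\theta\wedge\bigwedge^q\II^1=0$), hence $W$ is reflexive; it then gets $\bar\II\subseteq W$ by double-dualizing $\II\subseteq W$, and kills $Q=W/\bar\II$ by applying $\mathcal{H}om_{\OO_X}(Q,-)$ to $\bar\II\subseteq W$: one has $\mathcal{H}om_{\OO_X}(Q,W)=0$ ($Q$ torsion, $W$ torsion-free) and $\mathcal{E}xt^1_{\OO_X}(Q,\bar\II)=0$ ($\bar\II$ reflexive, $Q$ supported in codimension $\geq 2$), forcing $\mathcal{H}om_{\OO_X}(Q,Q)=0$ and $Q=0$. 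Your middle step is a real gain over the paper: the paper simply asserts that $W/\II$ is supported on $S(\II)$ (``It is clear that\ldots''), while your $\OO_X$-linearization $\bar d:\II^1\to\Omega^2_X/(\Omega^1_X\wedge\II^1)$, which propagates integrability from $V$ to all of $U$ before the frame computation identifying $W|_U=\II|_U=\bar\II|_U$, is precisely the justification that claim requires. Your first inclusion is essentially the paper's torsion-freeness argument in a different order.

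One caution about your endgame. You conclude via ``$\bar\II$ reflexive $\Rightarrow\ \Omega_X/\bar\II$ torsion-free,'' citing Proposition \ref{torsion}. As literally stated that proposition licenses this, but this is the delicate direction of the equivalence: for an arbitrary subsheaf of a locally free sheaf, reflexivity of the kernel does not imply torsion-freeness of the quotient (consider $\OO_X(-D)\subset\OO_X$ with quotient $\OO_D$); the implication is only valid when the relevant torsion is supported in codimension at least two. It is therefore here --- and not in the subbundle property of $\II^1|_U$, as your closing remark suggests --- that the codimension-two hypothesis is actually consumed in your argument, exactly as it is consumed in the paper's $\mathcal{E}xt^1$-vanishing. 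A safer finish within your own framework: record (one line, as in the paper) that $\Omega_X/W$ is torsion-free, so $W$ is reflexive; since $W$ and $\bar\II$ are both reflexive and agree on $U$, whose complement has codimension $\geq 2$, condition (\ref{in}) of Proposition \ref{torsion} gives $W=\iota_*(W|_U)=\iota_*(\bar\II|_U)=\bar\II$, using only the uncontroversial directions of that proposition.
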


\begin{proof} Let us denote $\widetilde{\II} = \{\theta\in\Omega_X / \ \theta\wedge\bigwedge^q\II^1=0\}$.
It is clear that $\widetilde{\II}$ is a homogeneous ideal of $\Omega_X$. 
Let us see that $\widetilde{\II}$ is reflexive, or equivalently, that $\Omega_X/\widetilde{\II}$ is torsion-free.
For this, suppose ${\theta}\in \Omega_X^r$ and
$f\in\OO_X$ are such that $f\theta\in\widetilde{\II}$, 
that is, $f\theta\wedge\bigwedge^q\II^1=0$. Since $\Omega_X$ is torsion-free 
we have $\theta\wedge\bigwedge^q\II^1=0$, that is,
$\theta\in\widetilde{\II}$, as required.

\

\noindent
Taking double-dual to the inclusion $\II\subseteq \widetilde{\II}$ we get 
$\overline{\II}\subseteq \widetilde{\II}$. Let us see that equality holds.
Denote $Q =  \widetilde{\II} / \overline{\II}$, so we need to prove $Q=0$.
It is clear that $\widetilde{\II}/\II$  is supported on $S(\II)$,
and hence the same is true for $Q$. 
Applying $\mathcal{H}om_{\OO_X}(Q,-)$ to the inclusion $\overline{\II}\subseteq \widetilde{\II}$ we obtain the exact sequence
\[
0\to \mathcal{H}om_{\OO_X}(Q,\overline{\II})\to
 \mathcal{H}om_{\OO_X}(Q,\widetilde{\II})\to
  \mathcal{H}om_{\OO_X}(Q,Q)\to
   \mathcal{E}xt^1_{\OO_X}(Q,\overline{\II})
\]
Since $Q$ is torsion and $\widetilde{\II}$ is torsion-free, 
$\mathcal{H}om_{\OO_X}(Q,\widetilde{\II})=0$.
And since $\overline{\II}$ is reflexive
and $Q$ is supported in codimension at least two, 
$\mathcal{E}xt^1_{\OO_X}(Q,\overline{\II})=0$ (see \cite{bourbaki2007elements}, Proposition 2,  or \cite{grothendieck2005cohomologie}, Proposition (2.4)).
Therefore $\mathcal{H}om_{\OO_X}(Q,Q)=0$  and then
$Q=0$, as claimed.
\end{proof}

\medskip

\begin{definition} \label{saturated Pfaff} A differential graded ideal will be called a \emph{saturated Pfaff ideal} if it is the saturation
of a singular integrable Pfaff ideal.
\end{definition}

\medskip

\begin{corollary}  \label{saturation3}
With notation as in Proposition \ref{saturation2}, we have $\bar \II^r = \Omega^r_X$ for $r > \dim(X)-q$.
\end{corollary}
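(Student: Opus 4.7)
The plan is to apply the explicit description of $\bar\II$ from Proposition \ref{saturation2} and observe that the defining equation becomes vacuous once the total degree exceeds $\dim X$. Specifically, by Proposition \ref{saturation2} we have
\[
\bar\II^r = \{\theta \in \Omega_X^r \ :\ \theta \wedge \textstyle\bigwedge^q \II^1 = 0\}.
\]
The wedge product $\theta \wedge \omega$, for $\omega$ a local section of $\bigwedge^q \II^1 \subseteq \Omega_X^q$, lands in $\Omega_X^{r+q}$.

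The key observation is simply that $\Omega_X^{r+q} = 0$ as soon as $r + q > n = \dim X$, since $X$ is smooth of dimension $n$. Hence for such $r$ the condition $\theta \wedge \bigwedge^q \II^1 = 0$ is satisfied by every $\theta \in \Omega_X^r$, and so $\bar\II^r = \Omega_X^r$. This gives exactly the range $r > n - q$ claimed.

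There is no real obstacle here: the statement is essentially a degree count once one has Proposition \ref{saturation2} in hand. The only mild subtlety to check is that the map $\bigwedge^q \II^1 \to \Omega_X^q$ induced by the inclusion $\II^1 \hookrightarrow \Omega_X^1$ is well-defined (which is automatic since wedge product on $\Omega_X$ restricts), so that the vanishing of $\theta \wedge \omega$ in $\Omega_X^{r+q}$ really is implied by $\Omega_X^{r+q} = 0$. Everything else is immediate.
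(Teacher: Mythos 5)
Your proof is correct and is exactly the paper's argument: the paper also deduces the corollary from the description of $\bar \II$ in Proposition \ref{saturation2} together with the vanishing $\Omega^{r+q}_X = 0$ for $r+q > \dim(X)$. You have merely spelled out the degree count that the paper leaves implicit.
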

\begin{proof}
This follows from Proposition \ref{saturation2} and the fact that  $\Omega^r_X = 0$ for $r >  \dim(X)$.
\end{proof}

\medskip

\begin{corollary}   \label{saturation4}
If $\II^1$ is generated freely on the open set $U$ by one-forms $\omega_1, \dots, \omega_q$  then
$$
\bar \II|_U =
\{\theta \in \Omega_U / \ \theta \wedge \omega_1 \wedge \dots \wedge \omega_q=0\}.
$$
That is, $\bar \II|_U$ is the kernel of exterior multiplication by $\omega = \omega_1 \wedge \dots \wedge \omega_q$.
More generally, if $\II^1$ is generated on the open set $U$ by one-forms $\omega_1, \dots, \omega_s$, $s \ge q$,  then
$$
\bar \II|_U =
\{\theta \in \Omega_U / \ \theta \wedge \omega_{i_1} \wedge \dots \wedge \omega_{i_q}=0, \ \forall \ 1 \le i_1 < \dots < i_q \le s \}.
$$

\end{corollary}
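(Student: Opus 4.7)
The plan is to deduce both statements directly from Proposition \ref{saturation2}, which identifies $\bar\II$ with the annihilator, under exterior product, of $\bigwedge^q \II^1$. Once this characterization is restricted to the open set $U$, the argument reduces to an elementary remark about generators of a $q$-th exterior power.

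First I would observe that both double-dualization and formation of $\bigwedge^q$ commute with restriction to open subschemes of $X$, so Proposition \ref{saturation2} yields
$$
\bar\II|_U = \bigl\{\theta \in \Omega_U \,\big|\, \theta \wedge \eta = 0 \text{ for every local section } \eta \text{ of } \textstyle\bigwedge^q \II^1|_U \bigr\}.
$$
In the free case, $\II^1|_U$ is $\OO_U$-free of rank $q$ on $\omega_1,\dots,\omega_q$, and therefore $\bigwedge^q \II^1|_U$ is $\OO_U$-free of rank one on the single generator $\omega_1 \wedge \dots \wedge \omega_q$. Since exterior multiplication by $\theta$ is $\OO_U$-linear in the second argument, the universal vanishing condition collapses to the single equation $\theta \wedge \omega_1 \wedge \dots \wedge \omega_q = 0$, which is the first formula.

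For the general case with $s \ge q$ generators $\omega_1,\dots,\omega_s$ of $\II^1|_U$, I would use that $\bigwedge^q$ of a finitely generated module is generated (as an $\OO_U$-module) by the $q$-fold wedges of a chosen generating set. Thus $\bigwedge^q \II^1|_U$ is generated by the elements $\omega_{i_1} \wedge \dots \wedge \omega_{i_q}$ for $1 \le i_1 < \dots < i_q \le s$, and again by $\OO_U$-bilinearity of $\wedge$ the condition $\theta \wedge \eta = 0$ for all $\eta$ in $\bigwedge^q \II^1|_U$ is equivalent to $\theta \wedge \omega_{i_1} \wedge \dots \wedge \omega_{i_q} = 0$ for every such index tuple.

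There is essentially no main obstacle here: the work has been done in Proposition \ref{saturation2}. The only points to verify are the standing hypotheses of that proposition (that $\II$ is a singular integrable Pfaff ideal of rank $q$ with $\mathrm{codim}\,S(\II) \ge 2$), which are inherited from the setup, and the well-known fact that a $q$-th exterior power is generated by $q$-fold wedges of generators, which follows from the defining universal property of $\bigwedge^q$.
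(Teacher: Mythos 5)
Your proposal is correct and matches the paper's intended route: the paper states Corollary \ref{saturation4} without proof, precisely because it follows from Proposition \ref{saturation2} by restricting the annihilator description to $U$ and noting that $\bigwedge^q \II^1|_U$ is $\OO_U$-generated by the $q$-fold wedges $\omega_{i_1}\wedge\dots\wedge\omega_{i_q}$ of the generators, so $\OO_U$-linearity of $\theta\wedge(-)$ reduces the universal condition to finitely many equations. Your two verification points (locality of the identity in Proposition \ref{saturation2} on open sets, and generation of the exterior power by wedges of generators, via multilinearity since every section of $\II^1|_U$ is an $\OO_U$-combination of the $\omega_i$) are exactly what is needed, and both are sound.
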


\newpage

\section {Calculating $\Hom$ and $\Ext^1$ for differential graded modules.}

\subsubsection{}

In this section we collect several calculations that seem useful for determining deformations and obstructions of exterior differential ideals.

\

\noindent
One objective is to determine  $\mathcal{H}om_{\Omega_X}(\II,\Omega_X/\II)$ for some differential graded ideals $\II$.

\

\noindent
We may rewrite \ref{cicles} in the case where $F = \II$ and $G = \Omega_X/\II$:

\begin{equation}
\xymatrix{
0 \ar[r] &\mathcal{H}om_{\Omega_X}({\II},{\Omega_X/\II})  \ar[r] &\mathcal{H}om_{\Omega_X^{\sharp}}({\II},{\Omega_X/\II})  \ar[r]^d &\mathcal{H}om_{\Omega_X^{\sharp}}({\II},{(\Omega_X/\II)[1]})
}
\label{cicles2}
\end{equation}

\subsubsection{}

Next we prove several propositions on  $\mathcal{H}om$, $\mathcal{E}xt^1$, $\Hom$ and $\Ext^1$ for $\OO_X$, $\Omega^{\sharp}_X$ and $\Omega_X$-Modules, and relations among them.

\

\begin{proposition}\label{hom omega} Let ${F},{G}$ be differential graded $\Omega_X$-Modules, as in Section \ref{preliminaries}. 
Let $S \subset X$ be a (Zariski) closed set and $U = X-S$. We have the exact commutative diagram:

\begin{equation}
{\footnotesize
\xymatrix{
0 \ar[r] & H^0(X, \mathcal{H}om_{\Omega_X}({F},{G}))  \ar[r] \ar[d]^ {a} &H^0(X, \mathcal{H}om_{\Omega_X^{\sharp}}({F},{G}))  \ar[r]^d \ar[d]^{b} &
H^0(X, \mathcal{H}om_{\Omega_X^{\sharp}}({F},{G[1]})) \ar[d] \ar[d]^{c} \\
0 \ar[r] &H^0(U, \mathcal{H}om_{\Omega_X}({F},{G}))  \ar[r] &H^0(U, \mathcal{H}om_{\Omega_X^{\sharp}}({F},{G}))   \ar[r]^d & H^0(U, \mathcal{H}om_{\Omega_X^{\sharp}}({F},{G[1]}))
}
}
\end{equation}
where $a, b, c$ denote restrictions of sections. Then,

\noindent
1) If $b$ is injective then $a$ is injective.

\noindent
2) If $b$ is an isomorphism and $c$ is injective then $a$ is an isomorphism.
\end{proposition}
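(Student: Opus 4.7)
The plan is to treat this as a straightforward diagram chase, using the functoriality of the exact sequence \ref{cicles} in both arguments and, most importantly, in the underlying variety. First I would observe that the three vertical restriction maps $a, b, c$ fit into the displayed commutative diagram because the whole sequence \ref{cicles2} is functorial under restriction to any open subset $U \subset X$, and the global sections functor $H^0(X, -)$ is left exact, so the two rows are exact as written. Nothing more than these two inputs is needed.

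For part (1), I would pick $\alpha \in H^0(X, \mathcal{H}om_{\Omega_X}(F,G))$ with $a(\alpha) = 0$, push it horizontally to an element $\tilde\alpha \in H^0(X, \mathcal{H}om_{\Omega_X^\sharp}(F,G))$, and note that by commutativity of the left-hand square, $b(\tilde\alpha)$ is the image of $a(\alpha)$ in the bottom row, hence zero. Injectivity of $b$ gives $\tilde\alpha = 0$, and injectivity of the top-row first arrow then yields $\alpha = 0$.

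For part (2), injectivity of $a$ is immediate from (1), so only surjectivity requires work. Given $\beta \in H^0(U, \mathcal{H}om_{\Omega_X}(F,G))$, I would let $\tilde\beta$ be its image in $H^0(U, \mathcal{H}om_{\Omega_X^\sharp}(F,G))$; exactness of the bottom row gives $d(\tilde\beta) = 0$. Using that $b$ is an isomorphism I lift $\tilde\beta$ uniquely to $\tilde\gamma \in H^0(X, \mathcal{H}om_{\Omega_X^\sharp}(F,G))$. The key check is that $\tilde\gamma$ lies in the kernel of $d$: by commutativity of the right-hand square, $c(d(\tilde\gamma)) = d(b(\tilde\gamma)) = d(\tilde\beta) = 0$, and injectivity of $c$ gives $d(\tilde\gamma) = 0$. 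Then exactness of the top row produces $\alpha \in H^0(X, \mathcal{H}om_{\Omega_X}(F,G))$ mapping to $\tilde\gamma$, and a final use of commutativity plus injectivity of the bottom-row first arrow shows $a(\alpha) = \beta$.

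There is no real obstacle here: the whole statement is a two-arrow variant of the four lemma, and the only thing that must be set up with care is verifying that restriction of sections commutes with the differential $d$ defined in \ref{differential}, so that the diagram is genuinely commutative; this is automatic because $d$ is defined sheaf-theoretically and restriction is a morphism of sheaves.
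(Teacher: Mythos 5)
Your proposal is correct and follows essentially the same route as the paper: the paper likewise obtains the diagram from the functoriality of the exact sequence \ref{cicles} together with left exactness of taking sections over an open set, and then settles 1) and 2) by exactly the diagram chase you carry out. Your write-up merely makes explicit the chase that the paper leaves as ``a simple diagram chase,'' including the correct use of injectivity of $c$ to show the lift $\tilde\gamma$ is killed by $d$.
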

\begin{proof}
The diagram follows from  \ref{cicles} and the fact that taking sections of a sheaf of abelian groups over an open set is a left-exact functor (\cite{MR2583634}, Ex. (II. 1.8)). 
1) and 2) are obtained by a simple diagram chase.
\end{proof}

\medskip

\begin{proposition}\label{torsion-map}
Let ${F}$ and ${G}$ be differential graded $\Omega_X$-Modules. 

\noindent
For $\lambda \in \Hom_{\Omega_X}({F},{G}[1])$ define
$d_{\lambda}: {G} \oplus{F} \to {G} \oplus{F}$  by  
$$d_{\lambda}(g, f) = (d_{{G}}(g) + \lambda(f), d_{{F}}(f)),$$
for $(g, f)$ a local section of ${G}\oplus{F}$.
Then $({G}\oplus{F}, d_{\lambda})$ is a differential graded $\Omega_X$-Module.
Denote $\tau(\lambda) \in \Ext_{\Omega_X}^1({F},{G})$ the extension of
differential graded  $\Omega_X$-Modules consisting of the trivial extension 
$0 \to G \to {G} \oplus{F} \to F \to 0$ of graded  $\OO_X$-Modules, where 
${G}\oplus{F}$ is endowed with the differential $d_{\lambda}$ as above.

\noindent
The map $\tau: \Hom_{\Omega_X}({F},{G}[1]) \to   \Ext_{\Omega_X}^1({F},{G})$ induces an injective linear map of complex vector spaces 
$$\bar \tau: H^1(\mathcal{H}omgr_{\Omega_X^{\sharp}}({F},{G}),d) \to \Ext_{\Omega_X}^1({F},{G}).$$

\noindent
$\bar \tau$ is called \emph{the torsion map}.
\end{proposition}

\

\begin{proof}
The following proof is motivated by calculations in \cite{MR3640821}.

\

\noindent
Let us first verify that $d_{\lambda}^2 = 0$. In matrix notation, one easily calculates:
\[
\begin{pmatrix}
d_{{G}}&\lambda\\
0&d_{{F}}
\end{pmatrix}^2=
\begin{pmatrix}
0&d_{{G}}\lambda+\lambda d_{{F}}\\
0&0
\end{pmatrix}.
\]
Since $d_{{G}[1]}=-d_{{G}}$ and $\lambda$ commutes with differentials, it follows that $d_{{G}}\lambda+\lambda d_{{F}}=0$.
We obtain that  $({G}\oplus{F}, d_{\lambda})$ is a differential graded $\Omega_X$-Module. 

\

\noindent
Let us show that $\tau$ is $\C$-linear. First, it is easy to check by using the Baer structure of $\Ext^1$ that
\[
\tau(\lambda_1+\lambda_2)=\tau(\lambda_1)+\tau(\lambda_2).
\]
Let us prove $\tau(r\lambda)=r\tau(\lambda)$ for $r\in \C$. As explained in \cite[Exercise A3.26]{MR1322960}, the $\C$-linear structure on $\Ext^1(F,G)$ is
given by the pullback of multiplication by $r:F\to F$. Specifically, given the extension $\tau(\lambda)$, the class of the extension $r\tau(\lambda)$
can be represented as the pullback $E_r$ inside $(G\oplus F)\oplus F$,
\[
\xymatrix{
0\ar[r]& G\ar[r]\ar@{=}[d]& E_r\ar[r]^{\pi_3}\ar[d]& F\ar[d]^{r}\ar[r]& 0\\
0\ar[r]& G\ar[r]& G\oplus F\ar[r]^{\pi}& F\ar[r]& 0
}
\]
where $E_r(U)  = \{(g,rf,f)\,\colon\,g\in G(U), f\in F(U)\}$ and the differential on $E_r$ is the restriction of $d_\lambda\oplus d_F$.
Let us check that $E_r$ is in the same class as the extension $\tau(r\lambda)$. First, define the sheaf isomorphism $c:E_r\to G\oplus F$ given
locally by $c(g,rf,f)=(g,f)$,
\[
\xymatrix{
&&(E_r,d_\lambda\oplus d_F)\ar[dr]\ar[dd]^c\\
0\ar[r]& G\ar[ru]\ar[dr]& & F\ar[r]& 0\\
&&(G\oplus F,d_{r\lambda}) \ar[ur]
}
\]
Clearly $c$ is a morphism of extensions. And it commutes with the differentials:
\begin{align*}
d_{r\lambda}c(g,rf,f) &= d_{r\lambda}(g,f) \\
&= (d_G(g)+r\lambda(f),d_F(f)).\\
c(d_\lambda\oplus d_F)(g,rf,f) &= c(d_G(g)+\lambda(rf),d_F(rf),d_F(f))\\
& = (d_G(g)+\lambda(rf),d_F(f)).
\end{align*}
Then, 
\[
\tau(r\lambda)=r\tau(\lambda),\quad\forall r\in\C.
\]

\

\noindent
To see that $\tau$ induces a map from $H^1(\mathcal{H}omgr_{\Omega_X^{\sharp}}({F},{G}),d)$,
recall that $\Hom_{\Omega_X}({F},{G}[1]) = Z^1(\mathcal{H}omgr_{\Omega_X^{\sharp}}({F},{G}),d)$,
and let us check that $\tau(\lambda) = 0$ if $\lambda$ is a border. Then assume that $\lambda= d(h) = d_{{G}} \circ h-h \circ d_{{F}}$
for some homomorphism $h: F \to G$ of graded $\Omega_X$-Modules. Define $\phi:{G}\oplus {F}\to {G}\oplus {F}$ by
\[
\phi = \begin{pmatrix}
id_{{G}} & h \\
0 & id_{{F}}
\end{pmatrix}.
\]
We claim that $\phi: ({G}\oplus {F}, d_{\lambda}) \to ({G}\oplus {F}, d_{0})$ is an isomorphism of differential graded $\Omega_X$-Modules,
that is, $\phi  \circ  d_{\lambda} = d_{0} \circ \phi$. This equality follows from the calculation:
\begin{align*}
\begin{pmatrix}
id_{{G}}&h\\
0&id_{{F}}
\end{pmatrix}
\begin{pmatrix}
d_{{G}}&\lambda\\
0&d_{{F}}
\end{pmatrix}
-
\begin{pmatrix}
d_{{G}}&0\\
0&d_{{F}}
\end{pmatrix}
\begin{pmatrix}
id_{{G}}&h\\
0&id_{{F}}
\end{pmatrix}
&= \\
\begin{pmatrix}
0 &  \lambda+h \circ d_{{F}} - d_{{G}} \circ h\\
0&0
\end{pmatrix}
&=0.
\end{align*}

\

\noindent
Then, $\tau(\lambda)=\tau(0) \in \Ext_{\Omega_X}^1({F},{G}).$ Since clearly $\tau(0) = 0$, it follows that $\tau(\lambda)=0$.
Therefore $\tau$ factors through the quotient $H^1(\mathcal{H}omgr_{\Omega_X^{\sharp}}({F},{G}),d)$ and defines $\bar \tau$.

\

\noindent
Now let us prove the injectivity of $\bar \tau$. Assume that $\tau(\lambda)=0$.
Then, the extension $\tau(\lambda)$
is equivalent, under a map $\phi$ in $\Omega_X$-dgmod, to the trivial extension $\tau(0)$.
We want to show that $\lambda$ is a border. 
From the following diagram in $\Omega_{X}$-dgmod
we get a characterization of $\phi$,
\[
\xymatrix{
&&{G}\oplus {F}\ar[dr]\ar[dd]^{\phi}&\\
0\ar[r]&{G}\ar[ur]\ar[dr]& &{F}\ar[r]&0\\
&&{G}\oplus {F}\ar[ur]&
}
\]

\

\[
\phi=
\begin{pmatrix}
id_{{G}}&h\\
0&id_{{F}}
\end{pmatrix},\quad
h:{F}\to{G}.
\]

\

\noindent
Given that $\phi$ is a map in $\Omega_X$-dgmod, we know that $\phi  \circ  d_{\lambda} = d_{0} \circ \phi$.
The calculation that was done above then gives
$d_{{G}}h=\lambda+hd_{{F}}$. Therefore $\lambda$ is a border, as we wanted to prove.
\end{proof}

\medskip

\medskip
\noindent
The following Proposition is similar to Proposition (3.7), announced in  \cite{MR3640821}.

\medskip

\begin{proposition} \label{h1 y ext1 2} We keep the notations of Proposition \ref{torsion-map}.
Then we have the following exact sequence of complex vector spaces:

\begin{equation}
\xymatrix{
0 \ar[r] & H^1(\mathcal{H}omgr_{\Omega_X^{\sharp}}({F},{G}),d)  \ar[r]^-{\bar \tau} &  \Ext_{\Omega_X}^1({F},{G})  \ar[r]^-{\alpha} &  \bigoplus_{r \in \Z}  \Ext^1_{\OO_X}({F^r},{G^r}),
}
\label{torsion2}
\end{equation}
where $\bar \tau$ is defined in Proposition  \ref{torsion-map}, and $\alpha$ is the map induced by considering $F$ and $G$ only as graded $\OO_X$-Modules, 
forgetting the rest of the structures of $\Omega_X$-Modules.
\end{proposition}

\begin{proof}
By Proposition \ref{torsion-map}, we only need to check exactness in the middle, that is, $\im \bar \tau = \ker \alpha$.  
For $\lambda \in \mathcal{H}om_{\Omega_X}({F},{G}[1])$, 
by definition $\tau(\lambda) = ({G}\oplus{F}, d_{\lambda})$ is split as  graded $\OO_X$-Module, that is, $\alpha(\tau(\lambda)) = 0$.
Therefore $\alpha \circ \bar \tau = 0$ and $\im \bar \tau \subset \ker \alpha$. 

\noindent
Conversely, to see $\ker \alpha \subset \im \bar \tau$, consider an extension
$$\EE=(0 \to G \to E \to F \to 0) \in  \Ext_{\Omega_X}^1({F},{G}). $$
Suppose that $\alpha(\EE) = 0$, that is, $\EE$ splits as an extension of graded $\OO_X$-Modules. Hence we may assume that $E = G \oplus F$. 
Then the differential must be equal to $d_{\lambda}$
for some $\lambda:{F}\to{G}[1]$. In fact, by Leibnitz rule, 
$\lambda \in \Hom_{\Omega_X^{\sharp}}({F},{G}[1])$.
Finally, from the condition $d^2=0$, we get $\lambda \in 
\Hom_{\Omega_X}({F},{G}[1]) = Z^1(\mathcal{H}omgr_{\Omega_X^{\sharp}}({F},{G}),d)$:
\[
0=d^2=\begin{pmatrix}
d_{{G}}&\lambda\\
0&d_{{F}}
\end{pmatrix}
\begin{pmatrix}
d_{{G}}&\lambda\\
0&d_{{F}}
\end{pmatrix}=
\begin{pmatrix}
0&d_{{G}}\lambda+d_{{F}}\lambda\\
0&0
\end{pmatrix},
\]
and therefore $\EE \in \im(\bar \tau)$, as wanted.
\end{proof}

\

\medskip

\begin{corollary} \label{h1 y ext1 sheaf} With the notations of Propositions \ref{torsion-map} and \ref{h1 y ext1 2},
we have an exact sequence of sheaves of complex vector spaces:

\begin{equation}
\xymatrix{
0 \ar[r] & \HH^1(\mathcal{H}omgr_{\Omega_X^{\sharp}}({F},{G}),d)  \ar[r]^-{\tilde \tau} &  \mathcal{E}xt_{\Omega_X}^1({F},{G})  \ar[r]^-{\tilde \alpha} &  
\bigoplus_{r \in \Z}  \mathcal{E}xt^1_{\OO_X}({F^r},{G^r}),
}
\label{torsion sheaf}
\end{equation}
where $\tilde \tau$ (resp. $\tilde \alpha$) is the associated sheaf map induced by $\bar \tau$ (resp. by $\alpha$).
\end{corollary}

\begin{proof} Following \cite{MR0102537} (4.2.2) let us define the presheaf  $\Ext_{\Omega_X}^1( - ; {F},{G})$ by
$$\Ext_{\Omega_X}^1( - ; {F},{G})(U) = \Ext_{\Omega_U}^1({F}|_U,{G}|_U),$$
for each open $U \subset X$. And similar definition for $\Ext_{\OO_X}^1( - ; {F},{G})$.

\

\noindent
Applying Propositions \ref{torsion-map} and \ref{h1 y ext1 2} on each open  $U \subset X$ we get an exact sequence of presheaves
\begin{equation}
\xymatrix{
0 \ar[r] & B^1 \ar[r] & Z^1  \ar[r]^-{ \tau} &  \Ext_{\Omega_X}^1( - ; {F},{G})  \ar[r]^-{\alpha} &  \bigoplus_{r \in \Z}  \Ext^1_{\OO_X}( - ; {F^r},{G^r}),
}
\label{torsion presheaf}
\end{equation}
where $B^1$ and $Z^1$ are the presheaves of borders and cycles defined by

\

\noindent
$Z^1(U) =  Z^1(\mathcal{H}omgr_{\Omega_U^{\sharp}}({F}|_U,{G}|_U),d)  = \Hom_{\Omega_U}({F}|_U,{G}|_U[1]),$

\

\noindent
$B^1(U) = B^1(\mathcal{H}omgr_{\Omega_U^{\sharp}}({F}|_U,{G}|_U),d) = d(H^0(U, \mathcal{H}omgr_{\Omega_U^{\sharp}}({F}|_U,{G}|_U))).$

\

\noindent
Denoting $\tilde P$ the sheaf associated to a presheaf $P$, we have:

\

\noindent
$\tilde Z^1 / \tilde B^1 = Z^1 / \tilde B^1 = \HH^1(\mathcal{H}omgr_{\Omega_X^{\sharp}}({F},{G}),d)$, 

\

\noindent
$\Ext_{\Omega_X}^1( - ; {F},{G}) \tilde{}  = \mathcal{E}xt_{\Omega_X}^1({F},{G})$, 

\

\noindent
$\Ext^1_{\OO_X}( - ; {F^r},{G^r}) \tilde{}  =  \mathcal{E}xt^1_{\OO_X}({F^r},{G^r})$.
 
 \

\noindent
For the last two equalities see \cite{MR0102537}, paragraph after (4.2.3).

\

\noindent
Applying the exact functor $P \mapsto \tilde P$ to \ref{torsion presheaf} we obtain \ref{torsion sheaf}. 

\end{proof}

\

\medskip

\begin{lemma}\label{homotopies}
If $u,v:{F}\to{G}$ are homotopic
and $u',v':{F}'\to{G}'$  are homotopic
then, 
\[
u'_*u^*,v'_*v^*:\mathcal{H}omgr_{\Omega_X^{\sharp}}({G},{F}')
\to
\mathcal{H}omgr_{\Omega_X^{\sharp}}({F},{G}')
\]
are homotopic. The same is true for
\[
u'\otimes u,v'\otimes v:{F}\otimes_{\Omega_X^{\sharp}}{F}'\to
{G}\otimes_{\Omega_X^{\sharp}}{G}'.
\]
\end{lemma}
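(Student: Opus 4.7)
The plan is to treat both assertions as the standard fact that a DG bifunctor sends chain homotopies to chain homotopies, and simply spell out an explicit homotopy in each case. Recall from \eqref{differential} that $u,v:F\to G$ are homotopic precisely when there exists $h\in\mathcal{H}om_{\Omega_X^{\sharp}}(F,G[-1])$ with $u-v=d_G\circ h+h\circ d_F$, i.e.\ $u-v=dh$ inside the complex $\mathcal{H}omgr_{\Omega_X^{\sharp}}(F,G)$. Fix such homotopies $h$ for $(u,v)$ and $h'$ for $(u',v')$.

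For the $\mathcal{H}omgr$ assertion, I would use the telescoping identity
\[
u'_*u^* - v'_*v^* \;=\; u'_*(u^*-v^*) \;+\; (u'_*-v'_*)\,v^*
\]
and define, for a homogeneous $f\in\mathcal{H}om_{\Omega_X^{\sharp}}(G,F'[k])$, the candidate homotopy
\[
H(f)\;:=\; h'\circ f\circ u \;+\;(-1)^{k}\,v'\circ f\circ h.
\]
A direct computation of $dH(f)+H(df)$, expanding with \eqref{differential} and using that $u,v,u',v'$ commute with the differentials while $dh=u-v$ and $dh'=u'-v'$, should collapse to $u'fu-v'fv=(u'_*u^*-v'_*v^*)(f)$. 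The cross-terms involving $u\,d_F-d_Gu$ vanish because $u$ is a chain map, and analogously for the primed maps; the remaining terms combine by the defining relation of the homotopies.

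For the tensor product assertion, I would proceed completely in parallel. Recall that on $F\otimes_{\Omega_X^{\sharp}}F'$ the differential is $d(x\otimes y)=dx\otimes y+(-1)^{|x|}x\otimes dy$. Define
\[
H(x\otimes y)\;:=\; h(x)\otimes u'(y) \;+\;(-1)^{|x|}\,v(x)\otimes h'(y),
\]
and verify by direct computation that $dH+Hd=u\otimes u'-v\otimes v'$; once more the chain-map property of $u,v,u',v'$ kills the unwanted cross terms, and $dh=u-v$, $dh'=u'-v'$ produce the two wanted pieces via the same telescoping trick.

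The only real obstacle is bookkeeping of signs: the paper's convention \eqref{differential} places a $(-1)^{k}$ in $df$ when $f$ has degree $k$, and the tensor differential carries its own Koszul sign, so the signs in $H$ must be chosen to match. I would therefore (i) fix the convention once and for all at the beginning, (ii) verify the Hom case on a generator $f$ of pure degree $k$, (iii) verify the tensor case on a simple tensor $x\otimes y$ with $x$ of pure degree, and (iv) note that everything is additive and $\Omega_X^{\sharp}$-linear so the formulas extend. No deeper ingredient is needed: both statements are formal consequences of the graded Leibniz rules already recorded in Section~\ref{preliminaries}.
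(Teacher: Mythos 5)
Your proposal is correct and essentially reproduces the paper's own argument: your candidate homotopy $H(f)=h'\circ f\circ u+(-1)^{k}\,v'\circ f\circ h$ is exactly the paper's $\omega^{n}=w'_{*}u^{*}+(-1)^{n}v'_{*}w^{*}$, and the collapse of $\delta H+H\delta$ via the chain-map property of $u,v,u',v'$ together with $dh=u-v$, $dh'=u'-v'$ is precisely the computation carried out there (the only immaterial discrepancy is that your stated telescoping $u'_{*}(u^{*}-v^{*})+(u'_{*}-v'_{*})v^{*}$ is the opposite split from the one your formula actually yields, namely $(u'_{*}-v'_{*})u^{*}+v'_{*}(u^{*}-v^{*})$). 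For the tensor case the paper merely cites the analogous argument in its reference, and your explicit homotopy $H(x\otimes y)=h(x)\otimes u'(y)+(-1)^{|x|}v(x)\otimes h'(y)$ is the correct instance of it.
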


\begin{proof}
This proof was extracted from \cite{bourbakihomologicalalgebra}, \S 5.
Let $w,w'$ be such that $u-v=dw+wd$, $u'-v'=dw'+w'd$ and
let us define the homotopy $\omega$ by
$\omega^n=w'_*u^*+(-1)^nv'_*w^*$. 
Let us denote by $\delta$ the differential in 
$\mathcal{H}omgr_{\Omega_X^{\sharp}}({G},{F}')$.
Then, given 
$s\in\mathcal{H}omgr_{\Omega_X^{\sharp}}({G},{F}')_n$,
\[
(\delta\omega+\omega\delta)(s)=
\delta(w'su+(-1)^nv'sw)+\omega(ds-(-1)^n sd)=
\]
\[
d(w'su+(-1)^nv'sw)-(-1)^{n+1}(w'su+(-1)^nv'sw)d+
\]
\[
w'(ds-(-1)^n sd)u+(-1)^{n+1}v'(ds-(-1)^n sd)w=
\]
\[
dw'su
+v'swd
+w'dsu
+v'sdw=
(dw'+w'd)su+v's(wd+dw)
=
\]
\[
(u'-v')su+v's(u-v)=(u'_*u^*-v'_*v^*)s.
\]
For the tensor product a similar proof applies, see \cite{bourbakihomologicalalgebra}, \S 4.
\end{proof}

\medskip

\begin{lemma}\label{hom-depth}
Let $X$ be a smooth algebraic variety and let $S\subseteq X$ be a closed subset.
Let $F, G$ be two coherent $\OO_X$-Modules. Then, 

a) If $\depth_S(G) \ge 2$ then $\depth_S(\mathcal{H}om_{\OO_X}(F,G)) \ge 2$. 

b) If $F$ is locally free then $\depth_S(\mathcal{H}om_{\OO_X}(F,G)) = \depth_S(G).$
\end{lemma}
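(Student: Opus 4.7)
The plan is to reduce both statements to elementary computations with local cohomology, using the characterization $\depth_S(F) \ge k \iff \mathcal{H}^i_S(F) = 0$ for all $i < k$ (as used in Proposition \ref{torsion}), together with the fact that $\mathcal{H}om_{\OO_X}(-,G)$ is left exact.

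For part (b), I would argue locally: since $F$ is locally free of some rank $r$, on a small enough open $U \subseteq X$ there is an isomorphism $F|_U \cong \OO_U^{\,r}$, and hence $\mathcal{H}om_{\OO_X}(F,G)|_U \cong (G|_U)^r$. Local cohomology $\mathcal{H}^i_S$ commutes with finite direct sums, so $\mathcal{H}^i_S(\mathcal{H}om_{\OO_X}(F,G))$ vanishes if and only if $\mathcal{H}^i_S(G)$ vanishes, and the two depths coincide.

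For part (a), the idea is to resolve $F$ by free modules in the first two steps. Since $F$ is coherent, locally on $X$ there exists a presentation
\[
\OO_X^{\,a} \longrightarrow \OO_X^{\,b} \longrightarrow F \longrightarrow 0.
\]
Applying the left-exact functor $\mathcal{H}om_{\OO_X}(-,G)$ yields an exact sequence
\[
0 \longrightarrow \mathcal{H}om_{\OO_X}(F,G) \longrightarrow G^b \longrightarrow G^a,
\]
and letting $K$ denote the image of the right-hand map we get a short exact sequence
\[
0 \longrightarrow \mathcal{H}om_{\OO_X}(F,G) \longrightarrow G^b \longrightarrow K \longrightarrow 0,
\]
together with the inclusion $K \hookrightarrow G^a$. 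By hypothesis $\mathcal{H}^0_S(G) = \mathcal{H}^1_S(G) = 0$, so $\mathcal{H}^0_S(G^b) = \mathcal{H}^1_S(G^b) = 0$ and $\mathcal{H}^0_S(G^a) = 0$. Left exactness of $\mathcal{H}^0_S$ applied to $K \hookrightarrow G^a$ then gives $\mathcal{H}^0_S(K) = 0$. Plugging these vanishings into the long exact sequence
\[
0 \to \mathcal{H}^0_S(\mathcal{H}om_{\OO_X}(F,G)) \to \mathcal{H}^0_S(G^b) \to \mathcal{H}^0_S(K) \to \mathcal{H}^1_S(\mathcal{H}om_{\OO_X}(F,G)) \to \mathcal{H}^1_S(G^b)
\]
forces $\mathcal{H}^0_S(\mathcal{H}om_{\OO_X}(F,G)) = \mathcal{H}^1_S(\mathcal{H}om_{\OO_X}(F,G)) = 0$, that is, $\depth_S(\mathcal{H}om_{\OO_X}(F,G)) \ge 2$.

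The only mildly delicate point is that the presentation of $F$ is a priori only local, but this is harmless: both $\depth_S$ and the local cohomology sheaves $\mathcal{H}^i_S$ are intrinsically local, so verifying $\mathcal{H}^0_S(\mathcal{H}om_{\OO_X}(F,G)) = \mathcal{H}^1_S(\mathcal{H}om_{\OO_X}(F,G)) = 0$ on a covering suffices. I do not expect any serious obstacle beyond bookkeeping.
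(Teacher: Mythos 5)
Your proof is correct, and for part (a) it takes a genuinely different route from the paper. Part (b) is essentially identical to the paper's argument (locally $\mathcal{H}om_{\OO_X}(F,G)$ is a finite direct sum of copies of $G$, and $\mathcal{H}^i_S$ commutes with such sums). For part (a), however, the paper gives no argument at all: it simply cites Bourbaki (\emph{\'El\'ements de math\'ematique}, \S 1, Exercise 18). Your version supplies the missing proof, and it is the standard one for that exercise: choose a local presentation $\OO_X^{a}\to\OO_X^{b}\to F\to 0$, use left exactness of $\mathcal{H}om_{\OO_X}(-,G)$ to embed $\mathcal{H}om_{\OO_X}(F,G)$ in $G^{b}$ with cokernel $K\hookrightarrow G^{a}$, and then run the local cohomology long exact sequence, using left exactness of $\mathcal{H}^0_S$ to kill $\mathcal{H}^0_S(K)$. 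All the ingredients you invoke are available in the paper itself (the characterization $\depth_S(M)\ge 2\iff\mathcal{H}^0_S(M)=\mathcal{H}^1_S(M)=0$ is exactly what is used in Proposition \ref{torsion} and in the proof of Proposition \ref{h0-depth}), and your handling of the locality issue is right, since both $\mathcal{H}om$ and $\mathcal{H}^i_S$ commute with restriction to opens, so vanishing may be checked on a cover. What your approach buys is self-containedness and transparency: it shows precisely where the hypothesis $\depth_S(G)\ge 2$ enters (one needs $\mathcal{H}^1_S(G)=0$ for the $G^b$ term and only $\mathcal{H}^0_S(G)=0$ for the $G^a$ term), at the cost of a few lines that the paper outsources to a reference. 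The only pedantic caveat, shared equally by the paper's own proof of (b), is that the equality of depths implicitly assumes $F$ has positive rank; if $F=0$ locally, then $\mathcal{H}om_{\OO_X}(F,G)=0$ there and its depth is $+\infty$ by convention.
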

\begin{proof} 
\
a) See \cite{bourbaki2007elements}, \S 1, Exercise 18.

\noindent
b) It follows from the fact that, locally, $\mathcal{H}om_{\OO_X}(F,G)$
is a direct sum of copies of $G$ and $\mathcal{H}_S^i$ commutes with direct sums.
\end{proof}

\medskip

\begin{proposition}\label{h0-depth}
Let $X$ be a smooth algebraic variety, $S\subseteq X$ a closed subset
and $F$, $G$ differential graded $\Omega_X$-Modules
such that $\depth_S(G) \ge 2$. Then for all $k \in \Z$ we have:
\begin{align*}
& 1) \ \mathcal{H}^0_S(\mathcal{H}omgr_{\Omega_X^{\sharp}}(F,G))=0, \quad
\mathcal{H}^1_S(\mathcal{H}omgr_{\Omega_X^{\sharp}}(F,G))=0, \\
& 2) \ \mathcal{H}^0_S(Z^k)=0, \quad
\mathcal{H}^1_S(Z^k)=0, \\
& 3) \ \mathcal{H}^0_S(B^k)=0,  \quad
\mathcal{H}^1_S(B^k)=0, \\
& 4) \ \mathcal{H}^0_S(H^k)=0, 
\end{align*}
where, as in  \ref{boundaries}, 
$Z^k = Z^k(\mathcal{H}omgr_{\Omega_X^{\sharp}}(F,G),d)$, $B^k = B^k(\mathcal{H}omgr_{\Omega_X^{\sharp}}(F,G),d)$
and $H^k = H^k(\mathcal{H}omgr_{\Omega_X^{\sharp}}(F,G),d)$.

\end{proposition}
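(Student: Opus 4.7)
The plan is to prove (1) first and deduce (2), (3), (4) from it using long exact sequences of local cohomology together with the characterization (Proposition~\ref{torsion}, condition (9)) that $\mathcal{H}^0_S(\mathcal{F}) = \mathcal{H}^1_S(\mathcal{F}) = 0$ is equivalent to the canonical map $\mathcal{F} \to \iota_*(\mathcal{F}|_U)$ being an isomorphism, where $\iota\colon U = X \setminus S \hookrightarrow X$. The fundamental input is Lemma~\ref{hom-depth}(a), which transfers the depth hypothesis on $G$ to $\OO_X$-valued $\mathcal{H}om$ sheaves.

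For (1), I would present $\mathcal{H}om_{\Omega_X^\sharp}(F, G[k])$ as the kernel of the natural $\OO_X$-linear map
\[
\alpha_k\colon \bigoplus_j \mathcal{H}om_{\OO_X}(F^j, G^{j+k}) \longrightarrow \bigoplus_{j,\ell} \mathcal{H}om_{\OO_X}\bigl(\Omega_X^\ell \otimes_{\OO_X} F^j,\, G^{j+\ell+k}\bigr)
\]
recording failure of compatibility with the $\wedge$-action; both sides are finite direct sums by the bounded grading of $F$ and $G$. Since $G = \bigoplus_j G^j$ as $\OO_X$-Module, each $G^{j+k}$ inherits $\depth_S \ge 2$, and Lemma~\ref{hom-depth}(a) then gives $\depth_S \ge 2$ for every summand of the source and target of $\alpha_k$. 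The long exact sequence of local cohomology applied to $0 \to \ker\alpha_k \to \text{source} \to \operatorname{im}\alpha_k \to 0$, combined with the injection $\mathcal{H}^0_S(\operatorname{im}\alpha_k) \hookrightarrow \mathcal{H}^0_S(\text{target}) = 0$, forces $\mathcal{H}^0_S(\ker\alpha_k) = \mathcal{H}^1_S(\ker\alpha_k) = 0$, and summing in $k$ gives (1). Claim (2) is the identical argument applied to $Z^k = \ker\bigl(d\colon \mathcal{H}om_{\Omega_X^\sharp}(F, G[k]) \to \mathcal{H}om_{\Omega_X^\sharp}(F, G[k+1])\bigr)$, now using (1) for the depth of source and target; the first half of (3) follows at once from $B^k \hookrightarrow Z^k$ and (2).

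For (4), the plan is to exploit the extension property from (1). Working at a point $x \in S$, let a germ $[\varphi]_x \in \mathcal{H}^0_S(H^k)_x$ be represented by $\varphi \in Z^k(V)$ with $\varphi|_{V \cap U} \in B^k(V \cap U)$. The idea is, after shrinking $V$ around $x$ if necessary, to produce a single $\psi \in \mathcal{H}om_{\Omega_X^\sharp}(F, G[k-1])(V \cap U)$ with $d\psi = \varphi|_{V \cap U}$. The extension property from (1), applied to $\mathcal{H}om_{\Omega_X^\sharp}(F, G[k-1])$, then furnishes a unique $\bar\psi \in \mathcal{H}om_{\Omega_X^\sharp}(F, G[k-1])(V)$ extending $\psi$; uniqueness of the corresponding extension for $\mathcal{H}om_{\Omega_X^\sharp}(F, G[k])$ forces $d\bar\psi = \varphi$ on all of $V$, so $\varphi \in B^k(V)$ and $[\varphi]_x = 0$. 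The second half of (3) then drops out of the long exact sequence for $0 \to B^k \to Z^k \to H^k \to 0$: the segment $\mathcal{H}^0_S(Z^k) \to \mathcal{H}^0_S(H^k) \to \mathcal{H}^1_S(B^k) \to \mathcal{H}^1_S(Z^k)$ becomes $0 \to 0 \to \mathcal{H}^1_S(B^k) \to 0$ by (2) and (4), giving $\mathcal{H}^1_S(B^k) = 0$.

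The main technical obstacle is the gluing step inside (4), namely the production of a single global antiderivative $\psi$ on $V \cap U$ from the merely local information that $\varphi|_{V \cap U}$ lies in $B^k(V \cap U)$: the relevant \v{C}ech obstruction lives in $H^1(V \cap U, Z^{k-1})$, which on affine $V$ equals $\mathcal{H}^2_S(Z^{k-1})(V)$ and is not forced to vanish by $\depth_S(G) \ge 2$ alone. I expect this to be resolved by passing to the stalk colimit as $V \searrow \{x\}$ together with a careful local analysis exploiting the remaining structure of the complex (the coherence of $Z^{k-1}$ and the $\Omega_X$-module compatibilities), which is the central technicality whose precise treatment drives the proof.
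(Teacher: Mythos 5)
Your treatment of (1), (2) and of $\mathcal{H}^0_S(B^k)=0$ is correct, and it is essentially the paper's argument in different clothing: where the paper invokes the unique-extension criterion of Proposition \ref{torsion} (a section over $U-S$ extends uniquely over $U$ precisely when $\mathcal{H}^0_S=\mathcal{H}^1_S=0$) together with Lemma \ref{hom-depth} a), and then checks that the failure of $\Omega_X^{\sharp}$-linearity, respectively of $d$-linearity, of the extended section lands in $\mathcal{H}^0_S(G)=0$, you encode the same information as kernel presentations by maps whose source and target have the required depth, and run the long exact sequence of local cohomology. Up to that point the two proofs carry the same content.

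The genuine gap is exactly where you flag it. The paper's logical order is the reverse of yours: it proves \emph{both} halves of (3) first --- take $\psi\in H^0(U-S,B^k)$ with $\psi=d(h)$, extend $\psi$ as a cycle by its step 2), extend $h$ by its step 1), and conclude $d(h')=\psi'$ because $(d(h')-\psi')(f)\in H^0(U,\mathcal{H}^0_S(G))=0$ --- and then (4) is the purely formal consequence of $0\to B^k\to Z^k\to H^k\to 0$ using $\mathcal{H}^0_S(Z^k)=\mathcal{H}^1_S(B^k)=0$. Since, granting (1)--(2), one has $\mathcal{H}^0_S(H^k)\cong\mathcal{H}^1_S(B^k)$, your plan (prove (4), deduce the rest of (3)) and the paper's are equivalent, and both hinge on the same step: a section of the \emph{image sheaf} $B^k$ over $U-S$ (or over $V\cap U$) is a priori only locally of the form $d(h)$, and one needs a single primitive $h$ on the whole punctured set before the extension mechanism applies. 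The paper's proof of 3) opens by asserting such a global $h$ exists (``Then $\psi = d(h)$ for some $h\in H^0(U-S,\mathcal{H}om_{\Omega_X^{\sharp}}(F,G[k-1]))$''), i.e.\ it passes over the very gluing problem you identified; so you have correctly isolated the one non-formal point of the argument, but your proposal does not close it, and you say so. Moreover, your sketched repair cannot work as written: $Z^{k-1}$ is not a coherent $\OO_X$-Module, because by \ref{differential} the differential is a first-order operator, $d(af)=da\cdot f+a\,df$ for $a\in\OO_X$, so $Z^{k-1}$, $B^k$, $H^k$ are only sheaves of $\C$-vector spaces (stable under multiplication by closed forms); hence the identification of $H^1(V\cap U,Z^{k-1})$ with $\mathcal{H}^2_S(Z^{k-1})(V)$ via vanishing of coherent cohomology on affine $V$ is unavailable, and $\depth_S(G)\ge 2$ gives no control of this group. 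In short: your (1), (2) and $\mathcal{H}^0_S(B^k)=0$ are complete and faithful to the paper; (4) and $\mathcal{H}^1_S(B^k)=0$ remain unproven in your proposal, and to match the paper you would either have to justify the global primitive on $U-S$ or, as the paper's written proof implicitly does, take it for granted.
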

\begin{proof}
Recall (\cite{MR0224620}, \cite{grothendieck2005cohomologie} or Proposition \ref{torsion}) 
that for an abelian sheaf $M$, $H^0_S(M) = H^1_S(M) = 0$
is equivalent to the condition that the restriction map $H^0(U, M) \to H^0(U-S, M)$ is bijective, for any open $U \subset X$.
And for $M$ a coherent  sheaf this is also equivalent to $\depth_S(M) \ge 2$.

\

\noindent
From our hypothesis $\depth_S(G) \ge 2$ and Lemma \ref{hom-depth} we have 
$$\depth_S(\mathcal{H}om_{\OO_X}(F,G))\ge 2.$$
Therefore we can extend uniquely any $\OO_X$-linear map 
$\phi \in H^0(U- S,\mathcal{H}om_{\OO_X}(F,G))$
to an $\OO_X$-linear map 
$\phi' \in H^0(U,\mathcal{H}om_{\OO_X}(F,G))$, for any open $U \subset X$.

\

\noindent
Our first three claims are equivalent to the unique extension property for the abelian sheaves
$\mathcal{H}omgr_{\Omega_X^{\sharp}}(F,G)$, 
$Z^k$ and $B^k$. 

\

\noindent
Since the uniqueness of extension holds for $\mathcal{H}om_{\OO_X}(F,G)$, 
it easily follows that it also holds for each one of these abelian subsheaves of $\mathcal{H}om_{\OO_X}(F,G)$.

\

\noindent
Let us check the existence of extension in each case:

\

1) Let $\psi \in H^0(U- S,\mathcal{H}om_{\Omega_X^\sharp}(F,G))$
and $\psi'$ its unique $\OO_X$-linear extension to $U$.
Let $f \in H^0(U,F)$ and $\omega \in H^0(U,\Omega_X)$. Then, 
$\psi'(\omega f)-\omega\psi'(f) \in H^0(U,\mathcal{H}^0_S(G))$.
Since  $\depth_S(G) \ge 2$, we have $\mathcal{H}^0_S(G)=0$, and therefore
$\psi' \in H^0(U,\mathcal{H}om_{\Omega_X^\sharp}(F,G))$.

\

2) Let $\psi \in H^0(U- S,\mathcal{H}om_{\Omega_X}(F,G))$
and $\psi'$ its unique $\OO_X$-linear extension to $U$.
Let $f \in H^0(U,F)$. Then, 
$\psi'(d f)-d\psi'(f) \in H^0(U,\mathcal{H}^0_S(G))$.
Since $\mathcal{H}^0_S(G)=0$, we get
$\psi' \in H^0(U,\mathcal{H}om_{\Omega_X}(F,G))$.

\

3) Let $\psi \in H^0(U - S, B^k)$.
Then $\psi = d(h)$ for some $h \in H^0(U - S,\mathcal{H}om_{\Omega_X^\sharp}(F,G[k-1]))$.
The map $\psi$ is $\OO_X$-linear and satisfies $d(\psi) = 0$.
Hence by 2) we can extend (uniquely) $\psi$ to $\psi' \in H^0(U,\mathcal{H}om_{\Omega_X}(F,G[k]))$. Also, by 1), we can extend (uniquely) $h$ to
$h' \in H^0(U,\mathcal{H}om_{\Omega_X^\sharp}(F,G[k-1]))$.
For all $f \in H^0(U,F)$, $(d(h') - \psi')(f) \in H^0(U,\mathcal{H}^0_S(G))=0$. Then, $\psi'$ is a border.

\

4)  Follows from the exact sequences of local cohomology associated to the
short exact sequences $0 \to B^k \to Z^k \to H^k \to 0$.

\end{proof}

\medskip

\begin{corollary} \label{normal} Let $X$ be a smooth algebraic variety, $S\subseteq X$ a closed subset
and $F$, $G$ differential graded $\Omega_X$-Modules such that $\depth_S(G) \ge 2$. 
Then for any open $U \subset X$ we have a natural commutative diagram 
\begin{equation}
\xymatrix{
& H^0(U, \mathcal{H}om_{\Omega_X}({F},{G}))  \ar[r]^{a} \ar[d] & H^0(U-S, \mathcal{H}om_{\Omega_X}({F},{G})) \ar[d] \\
& H^0(U, \mathcal{H}om_{\Omega_X^{\sharp}}({F},{G}))  \ar[r]^{b}  & H^0(U-S, \mathcal{H}om_{\Omega_X^{\sharp}}({F},{G})) 
}
\label{commutative}
\end{equation}
and the restriction maps $a$ and $b$ are isomorphisms. Also, this diagram is cartesian.
\end{corollary}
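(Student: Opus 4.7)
The plan is to obtain the corollary essentially as a corollary of Proposition \ref{h0-depth}, by using the standard fact (see equation \ref{local coho}) that for an abelian sheaf $M$ on $X$, the vanishing $\mathcal{H}^0_S(M)=\mathcal{H}^1_S(M)=0$ is equivalent to the property that restriction $H^0(U,M)\to H^0(U-S,M)$ is an isomorphism for every open $U\subseteq X$. The verification amounts to invoking the four vanishing statements of Proposition \ref{h0-depth} for the correct sheaves and then checking the pullback property, which is formal.

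First I would establish that $b$ is an isomorphism. Apply Proposition \ref{h0-depth}, part 1), to conclude $\mathcal{H}^0_S(\mathcal{H}om_{\Omega_X^{\sharp}}(F,G))=\mathcal{H}^1_S(\mathcal{H}om_{\Omega_X^{\sharp}}(F,G))=0$; this is the degree-zero summand of $\mathcal{H}omgr_{\Omega_X^{\sharp}}(F,G)$, and the hypothesis of Proposition \ref{h0-depth} transfers directly since local cohomology commutes with the grading decomposition. The local cohomology exact sequence \ref{local coho} then gives that $\mathcal{H}om_{\Omega_X^{\sharp}}(F,G)\to \iota_{*}(\mathcal{H}om_{\Omega_X^{\sharp}}(F,G)|_{X-S})$ is an isomorphism, which by taking $H^0$ over $U$ yields that $b$ is an isomorphism.

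Next, I would argue that $a$ is an isomorphism by the same mechanism, but now using part 2) of Proposition \ref{h0-depth}: $\mathcal{H}^0_S(Z^0)=\mathcal{H}^1_S(Z^0)=0$, and $Z^0=\mathcal{H}om_{\Omega_X}(F,G)$ by \ref{Z^0}. Thus $a$ is an isomorphism by the same local cohomology argument.

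Finally, the diagram is cartesian for a purely formal reason: both horizontal arrows $a$ and $b$ are isomorphisms, the vertical arrows are the natural inclusions of sheaves of $\Omega_X$-linear maps into $\Omega_X^\sharp$-linear maps (see \ref{cicles2}), and the square commutes by functoriality of restriction. Given $(\psi,\phi)$ in the pullback $H^0(U-S,\mathcal{H}om_{\Omega_X}(F,G))\times_{H^0(U-S,\mathcal{H}om_{\Omega_X^\sharp}(F,G))}H^0(U,\mathcal{H}om_{\Omega_X^\sharp}(F,G))$, the unique element $a^{-1}(\psi)\in H^0(U,\mathcal{H}om_{\Omega_X}(F,G))$ must map to $\phi$ in $H^0(U,\mathcal{H}om_{\Omega_X^\sharp}(F,G))$ because both $b^{-1}(\phi|_{U-S})$ and $a^{-1}(\psi)$ (viewed in the bottom-left via the inclusion) restrict to the same section over $U-S$, and $b$ is injective. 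This gives the universal property of the pullback. There is no real obstacle here; the only thing to be mildly careful about is matching the degree conventions of $\mathcal{H}omgr$ with the degree-zero pieces $\mathcal{H}om$, but this is immediate from \ref{homgr} and \ref{Z^0}.
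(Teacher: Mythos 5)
Your proof is correct, and it reaches the same conclusion through a slightly different selection of lemmas than the paper. For the map $b$ you argue exactly as the paper does, via Proposition \ref{h0-depth} 1) and the local cohomology sequence \ref{local coho}. For the map $a$, however, the paper proceeds indirectly: it applies Proposition \ref{h0-depth} 1) a second time to conclude that the restriction map $c$ for $\mathcal{H}om_{\Omega_X^{\sharp}}(F,G[1])$ is also an isomorphism, and then invokes the formal left-exactness chase of Proposition \ref{hom omega} 2) (``$b$ an isomorphism and $c$ injective imply $a$ an isomorphism''). You instead apply Proposition \ref{h0-depth} 2) directly, using the identification $Z^0=\mathcal{H}om_{\Omega_X}(F,G)$ from \ref{Z^0}, so that $\mathcal{H}^0_S(Z^0)=\mathcal{H}^1_S(Z^0)=0$ and \ref{local coho} give the bijectivity of $a$ in one step, bypassing Proposition \ref{hom omega} altogether. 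The two routes are of comparable depth, since Proposition \ref{h0-depth} 2) is itself proved by the extension argument that Proposition \ref{hom omega} encapsulates, but your version is marginally more self-contained at the level of the corollary. On the cartesian claim your write-up is actually more careful than the paper's one-line ``it is clear'': you correctly record that, besides $a$ being an isomorphism, one needs the injectivity of $b$ to identify the image of $a^{-1}(\psi)$ in the bottom-left corner with $\phi$; the paper leaves this point implicit. Your remark that local cohomology commutes with the (finite) grading decomposition, so that part 1) of Proposition \ref{h0-depth} applies to the degree-zero summand $\mathcal{H}om_{\Omega_X^{\sharp}}(F,G)$ of $\mathcal{H}omgr_{\Omega_X^{\sharp}}(F,G)$, is exactly the bookkeeping needed and is valid here because the modules are bounded.
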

\begin{proof}
This diagram is part of the one in Proposition \ref{hom omega}, from which we keep the notation.
The restriction map $b$ is an isomorphism due to Proposition \ref{h0-depth} 1). 
By Proposition \ref{h0-depth} 1) again, $c$ is also an isomorphism. Hence, by Proposition \ref{hom omega}, $a$ is an isomorphism, as claimed.
Since $a$ is an isomorphism, it is clear that the square is cartesian. For later use: this means that if a homomorphism $\varphi: F \to G$ of 
${\Omega_X^{\sharp}}$-Modules commutes with $d$ when restricted to $U$, then it commutes with $d$ in $X$.
\end{proof}

\medskip

\begin{proposition} \label{poincare}
Let $X$ be a smooth algebraic variety. Let $\II$ be a singular integrable Pfaff ideal on $X$, with singular set $S=S(\II)$, see Definition \ref{singular scheme},
and let $G$ be an $\Omega_X^{\sharp}$-Module such that 
\begin{equation}
\II . G =0,  \ G^1  \text{is torsion-free, and} \ G^j  \text {is reflexive for} \ j \ge 2. 
\label{hipotesis G}
\end{equation} 
Then the restriction to the degree-one components 
$$\lambda: \mathcal{H}om_{\Omega_X^{\sharp}}(\II,G) \to \mathcal{H}om_{\OO_X}(\II^1,G^1),$$
$\lambda(\varphi) = \varphi^1$, is an isomorphism of $\OO_X$-Modules.
\end{proposition}
\begin{proof}
Denoting $U=X-S$, let us first see that the restriction 
$$\lambda|_U: \mathcal{H}om_{\Omega_X^{\sharp}}(\II,G)|_U \to  \mathcal{H}om_{\OO_X}(\II^1,G^1)|_U$$
is an isomorphism. 

\

\noindent
We have $\II^0 = 0$ and $\II^k|_U =  \Omega^{k-1}_U  \wedge  \II^1|_U$ for $k > 0$. 
Then it is clear that $\lambda|_U$ is a monomorphism. Here we are using the hypothesis that $G^1$ is torsion-free.

\

\noindent
We claim that $\lambda|_U$ is an isomorphism with inverse 
\begin{equation}
\iota: \mathcal{H}om_{\OO_X}(\II^1,G^1)|_U  \to \mathcal{H}om_{\Omega_X^{\sharp}}(\II,G)|_U, \label{i}
\end{equation}
defined by 
$\iota(s^1)(\mu   \wedge  \omega) =  \mu . s^1(\omega)$, for $s^1 \in \mathcal{H}om_{\OO_X}(\II^1,G^1)|_U$, 
$\omega \in \II^1|_U$ and $\mu \in  \Omega^{k-1}_U$.

\

\noindent
To see this, let us show first that  
$$\im(\lambda|_U) = \{s^1 \in \mathcal{H}om_{\OO_X}(\II^1,G^1)/ \  \tau_1. s^1(\tau_2) +  \tau_2. s^1(\tau_1) = 0, \ \forall \tau_1, \tau_2 \in \II^1\}.$$
Denote $A$ the right hand side. Let us see that $A \subset \im(\lambda|_U)$.
If  $s^1$ belongs to $A$, we show that $s^1$ can be extended to an $\Omega_X^{\sharp}$-linear map $s:\II\to G$.
For each $k \ge 2$ define $\widetilde{s}^k:  \Omega_X^{k-1} \otimes_{\OO_X} \II^1 \to G^{k}$ 
 as $\widetilde{s}^k(\theta \otimes \tau) = \theta . s^1(\tau)$. From the exactness of the Koszul complex \cite{MR1322960}
\[
\xymatrix{
\Omega_X^{k-2}  \otimes_{\OO_X}  S^2(\II^1) \ar[r]^<<<<<{\delta_{2,k-2}}&
\Omega_X^{k-1} \otimes_{\OO_X}  \II^1  \ar[r]^<<<<<{\delta_{1,k-1}}&
\II^{h}\ar[r]&0
}
\]
in order to define $s^k: \II^k \to G^k$ we just need to see that $\widetilde{s}^k(\ker(\delta_{1,k-1}))=0$, or equivalently, 
$\widetilde{s}^k \circ \delta_{2,k-2}=0$:
\[
\widetilde{s}^k \delta_{2,k-2}(\theta \otimes \tau_1 \tau_2 )=
\widetilde{s}^k((\theta \wedge \tau_1)\otimes \tau_2 + (\theta \wedge \tau_2) \otimes\tau_1)=
\]
\[
(\theta \wedge \tau_1) . s^1(\tau_2) + (\theta \wedge \tau_2) . s^1(\tau_1))  = \theta . (\tau_1 . s^1(\tau_2) +  \tau_2 . s^1(\tau_1))  = 0.
\]

\noindent
Then $s: \II \to G$ is well-defined, is $\Omega_X^{\sharp}$-linear and extends $s^1$, as we wanted to show. 

\

\noindent
The reverse inclusion $\im(\lambda|_U) \subset A$ follows easily from the same Koszul complex.
Then we obtain $\im(\lambda|_U) = A$. 

\

\noindent
If $\II . G = 0$ then clearly $A =  \mathcal{H}om_{\OO_X}(\II^1,G^1)$. It follows that 
$\lambda|_U$ is an epimorphism, and hence an isomorphism, as claimed.

\

\noindent
Recall the exact sequence of local cohomology  for a coherent sheaf $F$ and a closed subset $S \subset X$,
\begin{equation}
0\to\mathcal{H}^0_S(F)\to F\to \iota_{*}(F|_{X- S})\to \mathcal{H}^1_S(F)\to0.  \label{local coho}
\end{equation}
From Lemma \ref{hom-depth} a) and \ref{local coho}  we obtain
$$\mathcal{H}om_{\OO_X}(\II^1,G^1) = \iota_{*} (\mathcal{H}om_{\OO_X}(\II^1,G^1)|_U),$$ 
and from Proposition \ref{h0-depth} 1),
\begin{equation}
\mathcal{H}om_{\Omega_X^{\sharp}}(\II,G) = \iota_{*} (\mathcal{H}om_{\Omega_X^{\sharp}}(\II,G)|_U). \label{iso}
\end{equation}
Therefore $\lambda: \mathcal{H}om_{\Omega_X^{\sharp}}(\II,G) \to \mathcal{H}om_{\OO_X}(\II^1,G^1)$ is an isomorphism.
\end{proof}

\medskip

\noindent
The previous Proposition is also true replacing the Pfaff ideal $\II$ by its saturation $\bar \II$, as follows:

\medskip

\begin{proposition} \label{poincare2}
Let $X$ be a smooth algebraic variety. Let $\II$ be a singular integrable Pfaff ideal on $X$, with singular set $S=S(\II)$, and let $\bar \II$ be the saturation of $\II$. 
Let $G$ be an $\Omega_X^{\sharp}$-Module such that $\II . G =0$, $G^1$ is torsion-free and $\depth_S(G^j) \ge 2$ for $j \ge2$, as in \ref{hipotesis G}.  
Then the restriction to the degree-one components 
$$\bar \lambda: \mathcal{H}om_{\Omega_X^{\sharp}}(\bar \II,G) \to \mathcal{H}om_{\OO_X}(\bar \II^1,G^1),$$
$\bar \lambda(\varphi) = \varphi^1$, is an isomorphism of $\OO_X$-Modules. 
\end{proposition}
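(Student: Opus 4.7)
The plan is to deduce Proposition \ref{poincare2} directly from Proposition \ref{poincare} by restricting to the open set $U = X - S$ and then using the depth hypothesis to extend back to all of $X$, mirroring the second half of the proof of Proposition \ref{poincare}.

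First, I would observe that on $U$, the Pfaff ideal $\II$ coincides with its saturation. Indeed, by definition of $S = S(\II) = \Sing(\Omega^1_X/\II^1)$, the quotient $(\Omega^1_X/\II^1)|_U$ is locally free, and from the short exact sequence $0 \to \II^1|_U \to \Omega^1_U \to (\Omega^1_X/\II^1)|_U \to 0$ we conclude that $\II^1|_U$ is locally free as well. Since $\II$ is a Pfaff ideal, $\II^r|_U = \Omega^{r-1}_U \wedge \II^1|_U$ is locally free too, so $\II|_U$ is reflexive and $\bar \II|_U = \II|_U$. Hence the restriction $\bar \lambda|_U$ coincides with $\lambda|_U$ of Proposition \ref{poincare}; applying that proposition (the Koszul-complex argument, with the vacuously satisfied depth condition since $S \cap U = \emptyset$) yields that $\bar \lambda|_U$ is an isomorphism of sheaves on $U$.

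Next, I would upgrade this to an isomorphism on $X$ using the normality property of depth-$2$ sheaves. By Proposition \ref{h0-depth}(1) applied to the pair $(\bar \II, G)$ (whose hypothesis $\depth_S(G) \ge 2$ is supplied by our assumption), we have $\mathcal{H}^0_S(\mathcal{H}om_{\Omega_X^{\sharp}}(\bar \II, G)) = \mathcal{H}^1_S(\mathcal{H}om_{\Omega_X^{\sharp}}(\bar \II, G)) = 0$, and therefore
\[
\mathcal{H}om_{\Omega_X^{\sharp}}(\bar \II, G) \;=\; \iota_{*}\!\bigl(\mathcal{H}om_{\Omega_X^{\sharp}}(\bar \II, G)|_{U}\bigr).
\]
On the other side, $\depth_S(G^1) \ge 2$ together with Lemma \ref{hom-depth}(a) yields $\depth_S(\mathcal{H}om_{\OO_X}(\bar \II^1, G^1)) \ge 2$, and by the unique-extension criterion (see \ref{local coho})
\[
\mathcal{H}om_{\OO_X}(\bar \II^1, G^1) \;=\; \iota_{*}\!\bigl(\mathcal{H}om_{\OO_X}(\bar \II^1, G^1)|_{U}\bigr).
\]
The map $\bar \lambda$ on $X$ is the pushforward of the isomorphism $\bar \lambda|_U$, hence is itself an isomorphism.

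The only potential obstacle is verifying that $\bar \II|_U = \II|_U$ and that the depth hypotheses really do transfer cleanly to $\bar \II^1$, but both are straightforward: the former is the local-freeness argument above, and the latter only uses depth of $G$ (not of $\bar \II$), so Proposition \ref{h0-depth} and Lemma \ref{hom-depth} apply verbatim with $\bar \II$ in place of $\II$. No additional integrability or annihilation property of $\bar \II$ is needed, since we only invoke Proposition \ref{poincare} on $U$ where $\bar \II$ coincides with $\II$ and $\II \cdot G = 0$ already holds.
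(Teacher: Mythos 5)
Your proposal is correct and takes essentially the same approach as the paper: restrict to $U = X - S$, where $\bar \II|_U = \II|_U$ and Proposition \ref{poincare} applies, then extend across $S$ via the depth-two normality supplied by Proposition \ref{h0-depth} 1) and Lemma \ref{hom-depth}. The only cosmetic differences are that you push forward $\mathcal{H}om_{\OO_X}(\bar \II^1, G^1)$ directly, whereas the paper routes the identification through $\mathcal{H}om_{\OO_X}(\II^1, G^1)$ using the global statement of Proposition \ref{poincare}, and that you make explicit the local-freeness argument for $\bar \II|_U = \II|_U$ which the paper leaves implicit.
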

\begin{proof}
We keep the notations introduced in the proof of Proposition \ref{poincare}.
Again from Proposition \ref{h0-depth} 1) we have,
\begin{equation}
\mathcal{H}om_{\Omega_X^{\sharp}}(\bar \II,G) = \iota_{*} (\mathcal{H}om_{\Omega_X^{\sharp}}(\bar \II,G)|_U). \label{iso2}
\end{equation}
Since $\bar \II|_U = \II|_U$, and using Proposition \ref{poincare}, we have 
$$\iota_{*} (\mathcal{H}om_{\Omega_X^{\sharp}}(\bar \II,G)|_U) =
\iota_{*} (\mathcal{H}om_{\Omega_X^{\sharp}}(\II,G)|_U) = \mathcal{H}om_{\OO_X}(\II^1,G^1),$$ which completes the proof. Notice that the arguments above show that we have 
a commutative diagram of isomorphisms:
\begin{equation}
\xymatrix{
 \mathcal{H}om_{\Omega_X^{\sharp}}(\bar \II,G) \ar[r]^{\bar \lambda} \ar[d]  & \mathcal{H}om_{\OO_X}(\bar \II^1,G^1) \ar[d] \\
 \mathcal{H}om_{\Omega_X^{\sharp}}(\II,G)  \ar[r]^<<<<{\lambda} & \mathcal{H}om_{\OO_X}(\II^1,G^1)
}
\end{equation}

\end{proof}

\medskip

\begin{remark} \label{complex O}

Recall (see \ref{differential}) that 
$\mathcal{H}omgr_{\Omega_X^{\sharp}}({\II},{G}) = \bigoplus_{k \in \mathbb{Z}} \mathcal{H}om_{\Omega_X^{\sharp}}({\II},{G}[k]), \label{homgr}$
with differential  $df =d_{{G}} \circ f -(-1)^k f \circ d_{{\II}}$ for $f \in \mathcal{H}om_{\Omega_X^{\sharp}}({\II},{G}[k])$.

\

\noindent
Let us apply the construction of Proposition \ref{poincare} to $G[k]$ for each $k \in \mathbb{Z}$, to obtain isomorphisms
$\lambda[k]: \mathcal{H}om_{\Omega_X^{\sharp}}(\II,G[k]) \to \mathcal{H}om_{\OO_X}(\II^1,G[k]^1).$

\

\noindent
Hence we have commutative diagrams
\begin{equation}
\xymatrix{
 \mathcal{H}om_{\Omega_X^{\sharp}}(\II,G[k]) \ar[r]^{\lambda[k]} \ar[d]^{d^k}  & \mathcal{H}om_{\OO_X}(\II^1,G^{k+1}) \ar[d]^{d_{\OO}^k} \\
 \mathcal{H}om_{\Omega_X^{\sharp}}(\II,G[k+1])  \ar[r]^<<<<{\lambda[k+1]} & \mathcal{H}om_{\OO_X}(\II^1,G^{k+2})
}
\label{differential2} 
\end{equation}
defining $d_{\OO}^k = \lambda[k+1] \circ d^k \circ \lambda[k]^{-1}$ for $k \in \mathbb{Z}$. More explicitly, for $\varphi^1 \in \mathcal{H}om_{\OO_X}(\II^1,G^{k+1})$ let
$\varphi =  \lambda[k]^{-1} (\varphi^1) \in  \mathcal{H}om_{\Omega_X^{\sharp}}(\II,G[k])$ (see \ref{i}). Then it easily follows that 
\begin{equation}
d_{\OO}^k(\varphi^1) = (-1)^k (d^{k+1}_G \circ \varphi^1 - \varphi^2 \circ d^{1}_{\II}).
\label{differential3} 
\end{equation}

\

\noindent
We thus obtain an isomorphism of complexes of $\OO_X$-Modules
\begin{equation}
\bigoplus_{k\in\mathbb{Z}} \lambda[k]: \mathcal{H}omgr_{\Omega_X^{\sharp}}({\II},{G}) \to \bigoplus_{k\in\mathbb{Z}}  \mathcal{H}om_{\OO_X}(\II^1,G^{k+1}).
\label{complex d0}
\end{equation}
\end{remark}

\medskip

\begin{corollary} \label{tangente} With the notation and hypothesis of Proposition \ref{poincare2} and Remark \ref{complex O} we obtain
\begin{equation}
\mathcal{H}om_{\Omega_X}(\II,G) \cong \ker d_{\OO}^0 = \{ \varphi^1 \in \mathcal{H}om_{\OO_X}(\II^1,G^1) / \ d^{1}_G \circ \varphi^1 - \varphi^2 \circ d^{1}_{\II} = 0 \in G^2\}.
\label{tangente2}
\end{equation}
\end{corollary}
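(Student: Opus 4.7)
The plan is to deduce the corollary by assembling two previously established facts: the identification of $\mathcal{H}om_{\Omega_X}(\II,G)$ with the degree-zero cycles of the complex $\mathcal{H}omgr_{\Omega_X^{\sharp}}(\II,G)$, and the isomorphism of complexes produced in Remark \ref{complex O}. No new computation is needed, only a careful tracking of the differentials.

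First, I would recall from (\ref{Z^0}) that
\[
\mathcal{H}om_{\Omega_X}(\II,G) \;=\; Z^0\bigl(\mathcal{H}omgr_{\Omega_X^{\sharp}}(\II,G),d\bigr) \;=\; \ker\bigl(d^0\bigr),
\]
where $d^0 : \mathcal{H}om_{\Omega_X^{\sharp}}(\II,G) \to \mathcal{H}om_{\Omega_X^{\sharp}}(\II,G[1])$ is the differential from (\ref{differential}). The hypotheses $\II \cdot G = 0$ and $\depth_S(G)\ge 2$ are preserved under the shift $G \mapsto G[1]$, since the shift changes only the sign of the differential and leaves the underlying $\OO_X$-module, the annihilation property, and the depth along $S$ unchanged. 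Hence Proposition \ref{poincare} applies both to $G$ and to $G[1]$, producing the isomorphisms $\lambda[0]$ and $\lambda[1]$ of Remark \ref{complex O}.

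Next, the commutative square (\ref{differential2}) for $k=0$ says precisely that $\lambda[1]\circ d^0 = d_{\OO}^0\circ \lambda[0]$, so passing to kernels yields an induced isomorphism $\ker d^0 \xrightarrow{\sim} \ker d_{\OO}^0$. Combining with the identification above gives
\[
\mathcal{H}om_{\Omega_X}(\II,G)\;\cong\;\ker d_{\OO}^0,
\]
which is the first half of the claim. The explicit description of $\ker d_{\OO}^0$ then follows directly from the formula (\ref{differential3}) with $k=0$: for $\varphi^1 \in \mathcal{H}om_{\OO_X}(\II^1,G^1)$, setting $\varphi = \lambda[0]^{-1}(\varphi^1)$ (the unique $\Omega_X^{\sharp}$-linear extension given by the Koszul/Poincar\'e formula (\ref{i})) and writing $\varphi^2$ for its degree-two component, one has $d_{\OO}^0(\varphi^1) = d^1_G\circ \varphi^1 - \varphi^2\circ d^1_{\II}$, so the kernel consists exactly of the $\varphi^1$ satisfying the stated identity in $G^2$.

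Since Proposition \ref{poincare} and Remark \ref{complex O} carry all of the real technical content (the depth-based extension across $S$ and the translation of the $\Omega_X^{\sharp}$-differential into an $\OO_X$-map on the degree-one components), the corollary itself presents no serious obstacle. The only point worth checking carefully is the verification that the hypotheses of Proposition \ref{poincare} transfer from $G$ to $G[1]$, which, as noted above, is immediate.
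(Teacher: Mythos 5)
Your proof is correct and takes essentially the same route as the paper's: identify $\mathcal{H}om_{\Omega_X}(\II,G)$ with $\ker d^0$ via \ref{Z^0}/\ref{cicles}, transfer the kernel through the commutative square \ref{differential2} using the isomorphisms $\lambda[0]$ and $\lambda[1]$, and read off the explicit description from \ref{differential3} at $k=0$. Your extra verification that the hypotheses of Proposition \ref{poincare} pass to the shift $G[1]$ is a harmless elaboration of what Remark \ref{complex O} already builds in (the proposition only uses the $\Omega_X^{\sharp}$-structure, which the shift leaves unchanged).
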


\begin{proof}
From \ref{cicles}, $\mathcal{H}om_{\Omega_X}(\II,G) = \ker d^0$. From Proposition \ref{poincare2} and \ref{differential2}, $\ker d^0 \cong \ker d_{\OO}^0$. Combining with \ref{differential3} the result follows. 
\end{proof}

\medskip

\begin{example} \label{calculo tangente} Suppose $\II$ is locally generated by linearly independent 1-forms $\omega_1, \dots, \omega_q$. As in \ref{frobenius} we have, locally, 
\begin{equation}
d\omega_i  = \sum_j \mu_{ij} \wedge \omega_j,
\label{mu's}
\end{equation} 
for some one-forms $\mu_{ij}$. Let $\varphi$ be a local section of $\mathcal{H}om_{\Omega_X}(\II,G)$.
Then $\varphi$ is determined by its degree-one component $\varphi^1$, which in turn is determined by the images of basis elements $\eta_i = \varphi^1(\omega_i) \in G^1$.
Evaluating the condition of \ref{tangente2} on each $\omega_i$ 
and using  $\varphi^2(d^{1}_{\II}(\omega_i)) = \varphi^2( \sum_j \mu_{ij} \wedge \omega_j) =  \sum_j \mu_{ij} .  \varphi^1(\omega_j)$, we obtain the equivalent condition
\begin{equation}
d^{1}_G(\eta_i) - \sum_{j=1}^q \mu_{ij} . \eta_j = 0 \ \in G^2, \ \forall i = 1, \dots, q.
\label{tangente3}
\end{equation}
Multiplying by $\omega = \omega_1 \wedge \dots \wedge \omega_q$ and after an easy calculation we obtain another equivalent condition for the $\eta_i$, without involving the $\mu_{ij}$'s:
\begin{equation}
d\eta_i  \wedge \omega     +  \sum_{j=1}^q 
(-1)^{q-j}   d\omega_i   \wedge \hat \omega_j   \wedge \eta_j   = 0   \ \in \Omega^{q+2}, \ \forall i = 1, \dots, q.
\label{tangente4}
\end{equation}

\medskip
\noindent
For instance, suppose $\II$ is the saturated Pfaff ideal generated by a single integrable 1-form $\omega \in H^0(\PP^n, \Omega^1_{\PP^n}(e))$ of degree $e$. 
Let us take $G = \Omega/\II$ and assume  $G$ satisfies \ref{hipotesis G}.
Then,
\begin{equation}
\mathcal{H}om_{\Omega_{\PP^n}}(\II,\Omega/\II) \cong  \{\eta \in  \Omega^1_{\PP^n}(e) / \  d\eta \wedge \omega + d\omega \wedge \eta = 0.\}
\label{tangente5}
\end{equation}
\end{example}

\medskip
\noindent
We obtain:

\begin{corollary}
With the notation of Example \ref{calculo tangente}, the first order deformations of the foliation defined by $\omega$ coincide with the first order deformations of the saturated Pfaff ideal generated by $\omega$.
Also, it follows from \ref{tangente4} that the same conclusion holds for the foliation and the saturated Pfaff ideal generated by $\omega_1, \dots, \omega_q$.
\end{corollary}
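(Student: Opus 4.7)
The strategy is to recognize that both tangent spaces in question are described, via the identifications already established, as the same linear subspace of twisted forms cut out by the linearization of the Frobenius integrability condition.

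First, I would unwind the definition. A first order deformation of the foliation defined by an integrable $\omega \in H^0(\PP^n, \Omega^1_{\PP^n}(e))$ is a perturbation $\omega_t = \omega + t \eta$ with $t^2 = 0$, where $\eta \in H^0(\PP^n, \Omega^1_{\PP^n}(e))$, such that the Frobenius condition $\omega_t \wedge d\omega_t = 0$ continues to hold. Expanding $(\omega + t\eta) \wedge d(\omega + t\eta) = 0$ modulo $t^2$ and using $\omega \wedge d\omega = 0$, one obtains the linearized equation $\omega \wedge d\eta + \eta \wedge d\omega = 0$, equivalently $d\eta \wedge \omega + d\omega \wedge \eta = 0$. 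Hence the tangent space of integrable twisted $1$-forms at $\omega$ is precisely
\[
\{\eta \in H^0(\PP^n, \Omega^1_{\PP^n}(e)) \;/\; d\eta \wedge \omega + d\omega \wedge \eta = 0\}.
\]

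Second, I would invoke Proposition \ref{first order deformations} applied to the saturated Pfaff ideal $\bar\II$: its first order deformations are parametrized by $\Hom_{\Omega_{\PP^n}}(\bar\II, \Omega_{\PP^n}/\bar\II)$. By the isomorphism \ref{tangente5} in Example \ref{calculo tangente}, this Hom-space is also the set of $\eta$'s satisfying $d\eta \wedge \omega + d\omega \wedge \eta = 0$. Since both deformation spaces are described by identical equations in the same ambient vector space, they coincide, proving the single-form case.

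For the multi-form case, I would linearize the Frobenius integrability of the tuple. If $\omega_1, \dots, \omega_q$ are integrable (so that $\omega = \omega_1 \wedge \cdots \wedge \omega_q$ satisfies $d\omega_i \wedge \omega = 0$ for every $i$), then first order deformations of the foliation correspond to tuples $(\eta_1, \dots, \eta_q)$ such that $d(\omega_i + t\eta_i) \wedge \bigwedge_j (\omega_j + t\eta_j) \equiv 0 \pmod{t^2}$ for every $i$. Collecting the linear terms yields exactly
\[
d\eta_i \wedge \omega + \sum_{j=1}^{q} (-1)^{q-j}\, d\omega_i \wedge \hat\omega_j \wedge \eta_j = 0,
\]
which is formula \ref{tangente4}. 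Since that formula was obtained from $\Hom_{\Omega_X}(\bar\II, \Omega_X/\bar\II)$ via Example \ref{calculo tangente}, the two deformation spaces again coincide.

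The main obstacle is really just bookkeeping: checking the sign conventions when expanding the wedge of $q+1$ factors in the linearization, and making sure that the hypothesis $\depth_S(G) \ge 2$ with $G = \Omega_X/\bar\II$ needed to apply \ref{tangente5} and \ref{tangente4} is indeed met. Since $\bar\II$ is reflexive by Proposition \ref{saturation1}, $\Omega_X/\bar\II$ inherits enough depth along $S(\II)$ (which has codimension $\ge 2$ by Proposition \ref{torsion}) for the identification of Example \ref{calculo tangente} to be valid, and the comparison of the two linearized systems is then immediate.
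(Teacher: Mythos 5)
Your core argument is exactly the one the paper intends: the Corollary carries no separate proof in the paper (it is introduced by ``Hence we obtain'') because it is read off from Example \ref{calculo tangente} --- formula \ref{tangente5} (resp.\ \ref{tangente4}) identifies $\Hom_{\Omega_X}(\bar\II,\Omega_X/\bar\II)$, which parametrizes first order deformations of the saturated ideal by Proposition \ref{first order deformations} and Corollary \ref{tangent obstruction}, with the solution set of the linearized Frobenius condition, which parametrizes first order deformations of the foliation. Your expansion of $(\omega+t\eta)\wedge d(\omega+t\eta)$ and of $d(\omega_i+t\eta_i)\wedge\bigwedge_j(\omega_j+t\eta_j)$ modulo $t^2$, including the sign $(-1)^{q-j}$ produced by moving $\eta_j$ past $\omega_{j+1},\dots,\omega_q$, is correct and reproduces \ref{tangente4} on the nose; so the comparison of the two tangent spaces is sound and matches the paper's (implicit) proof.

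However, your closing paragraph contains a genuine error. You claim that since $\bar\II$ is reflexive (Proposition \ref{saturation1}), the quotient $\Omega_X/\bar\II$ ``inherits enough depth'' along $S$, i.e.\ $\depth_S(\Omega_X/\bar\II)\ge 2$. It does not: by Proposition \ref{torsion}, reflexivity of the subsheaf is equivalent to $\depth_S$ of the quotient being at least $1$, that is, to torsion-freeness (Serre's condition $S_1$) of $\Omega_X/\bar\II$ --- one short of what Example \ref{calculo tangente} requires. The hypothesis $\depth_S(G)\ge 2$ for $G=\Omega_X/\bar\II$ is strictly stronger (for instance, the maximal ideal of a point on a smooth surface is torsion-free of depth exactly one, so saturation alone never guarantees depth two of a quotient along a codimension-two locus), and the paper treats it accordingly: it is an explicit standing assumption in the Example (``assume $\depth_S(G)\ge 2$''), and Section \ref{examples} states that the validity of precisely this condition ``shall be analized'' in later work. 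Since the Corollary is stated ``with the notation of Example \ref{calculo tangente}'', that assumption is simply in force; you should quote it as a hypothesis rather than attempt to derive it from reflexivity. The error is localized --- it does not affect your linearization computations, which carry the actual content of the statement --- but as written that step is false and must be replaced by an appeal to the standing assumption.
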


\medskip

\begin{remark} \label{degree}
Again with the notation of Example \ref{calculo tangente}, suppose $X = \C^{n+1}$ and $\omega_i$ are homogeneous of degree $e_i$. Then in \ref{mu's} we may assume that $\mu_{ij}$ is a homogeneous rational one-form  of degree $e_i - e_j$, and equal to zero if $e_i < e_j$.

\noindent
In fact, if $\bar \mu_{ij}$ is the homogeneous component of $\mu_{ij}$ of degree $e_i - e_j$ then
$d\omega_i  = \sum_j \mu_{ij} \wedge \omega_j = 
\sum_j \bar \mu_{ij} \wedge \omega_j + \sum_j (\mu_{ij} - \bar \mu_{ij}) \wedge \omega_j$.
Since $d\omega_i$ is homogeneous of degree $e_i$ we obtain 
$d\omega_i = \sum_j \bar \mu_{ij} \wedge \omega_j$ and $\sum_j (\mu_{ij} - \bar \mu_{ij}) \wedge \omega_j = 0$, which implies the claim.
\end{remark}

\medskip

\begin{example} \label{formula d0} With notation as in Example \ref{calculo tangente}, we may also write an explicit local expression for $d_{\OO}$, as follows. Let $\varphi^1 \in \mathcal{H}om_{\OO_X}(\II^1,G^{k+1})$. 
Denote $\eta_i = \varphi^1(\omega_i) \in G^{k+1}$. Evaluating \ref{differential3} on $\omega_i$ and combining with \ref{mu's}
we obtain
\begin{equation}
d_{\OO}^k(\varphi^1)(\omega_i) = (-1)^k (d^{k+1}_G(\eta_i) - \sum_j \mu_{ij} .  \eta_j).
\label{differential4} 
\end{equation}
For instance, when $q=1$ and $\II$ is generated by $\omega$, we have 
$$d_{\OO}^k(\varphi^1)(\omega) = (-1)^k (d\eta -  \mu .  \eta),$$
where $d\omega = \mu \wedge \omega$ and $\eta = \varphi^1(\omega)$.
\end{example}

\medskip

\medskip
\noindent
Next, we have a Proposition that will be useful later.

\medskip

\begin{proposition}\label{h1}
Let $\II$ be a singular integrable Pfaff ideal with singular set $S=S(\II)$,
and let $G$ be an $\Omega_X^{\sharp}$-Module such that $\II . G =0$
and $G$ is reflexive as $\OO_X$-Module. Let  $U = X - S$. With notation as in Definition \ref{analytic sheaf}, 
if for some $k \ge 0$ we have $\HH^{k+1}(G^h|_U, d^h)=0$ then 
$$\HH^{k}(\mathcal{H}omgr_{\Omega_X^{\sharp}}(\II,G)^h, d^h)=0.$$
\end{proposition}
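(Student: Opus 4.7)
The strategy is to identify the complex $(\mathcal{H}omgr_{\Omega_X^{\sharp}}(\II,G)^h, d^h)$ with a complex built out of $(G^h, d^h)$, use the analytic Frobenius theorem to trivialize it outside $S^h$, and then propagate the local vanishing to a global one via the depth hypothesis.

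First, apply Proposition \ref{poincare}---a purely local statement that goes through unchanged with $\OO_X$ and $\Omega_X$ replaced by their analytic counterparts---to each shift $G^h[k]$. These still satisfy $\II^h \cdot G^h[k] = 0$ and $\depth_{S^h}(G^h[k]) \ge 2$, so assembling the resulting isomorphisms as in Remark \ref{complex O} yields an isomorphism of complexes of $\OO_X^h$-Modules
\[
(\mathcal{H}omgr_{\Omega_X^{\sharp,h}}(\II^h,G^h),\, d^h) \;\xrightarrow{\sim}\; \bigoplus_{k\in\Z}\bigl(\mathcal{H}om_{\OO_X^h}(\II^{1,h},(G^h)^{k+1}),\, d_\OO^k\bigr),
\]
with $d_\OO^k$ given by \ref{differential3}. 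It thus suffices to compute the cohomology sheaves of the right-hand side.

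Next, restrict to $U^h = X^h - S^h$. Since $\II^1|_U$ is locally free of rank $q$ and integrable, the analytic Frobenius theorem supplies, locally on $U^h$, holomorphic coordinates $y_1,\dots,y_n$ such that $\II^{1,h}$ is freely generated by $dy_1,\dots,dy_q$. In such a frame the Frobenius coefficients $\mu_{ij}$ of \ref{frobenius} vanish, so \ref{differential3} collapses to $d_\OO^k(\varphi^1)(dy_i) = (-1)^k d_G(\varphi^1(dy_i))$. Consequently, the local complex on $U^h$ is a direct sum of $q$ shifted copies of $(G^h, d^h)$, and its $k$-th cohomology agrees with $\HH^{k+1}(G^h,d^h)|_{U^h}$, which vanishes by hypothesis. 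Thus the sheaf $H^k := \HH^k(\mathcal{H}omgr_{\Omega_X^{\sharp}}(\II,G)^h, d^h)$ vanishes on $U^h$.

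Finally, we globalize via depth. Lemma \ref{hom-depth} together with $\depth_{S^h}(G^h) \ge 2$ implies that the $\OO_X^h$-Modules $\mathcal{H}om_{\OO_X^h}(\II^{1,h}, (G^h)^\bullet)$ have depth $\ge 2$ along $S^h$, so the analytic version of Proposition \ref{h0-depth}(4) gives $\mathcal{H}^0_{S^h}(H^k) = 0$. The local cohomology exact sequence \ref{local coho} then produces an injection $H^k \hookrightarrow \iota_*(H^k|_{U^h}) = 0$, forcing $H^k = 0$, as required. The principal technical point is to verify that Propositions \ref{poincare} and \ref{h0-depth} survive analytification: the Koszul-based construction of the inverse map $\iota$ in \ref{i} is a local argument valid over any commutative ring (in particular $\OO_X^h$), and depth of a coherent sheaf along a closed subscheme is preserved under analytification on smooth complex varieties, so both propositions transfer without essential change.
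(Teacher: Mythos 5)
Your proposal is correct and follows essentially the same route as the paper's proof: reduce via the isomorphism of Remark \ref{complex O} to the complex $\mathcal{H}om_{\OO_X}(\II^1,G^{\bullet+1})^h$, use analytically closed (Frobenius) local generators of $\II^1$ on $U=X-S$ so that the differential \ref{differential3} becomes $\pm d_G$ componentwise and the hypothesis $\HH^{k+1}(G^h,d)=0$ kills the cohomology there, and then extend across $S$ using $\depth_S(G)\ge 2$. Your two variations are cosmetic rather than substantive: you trivialize the complex on $U$ as $q$ shifted copies of $(G^h,d)$ instead of explicitly constructing the primitive $\psi^1$ on a closed frame as the paper does, and you conclude globally from $\mathcal{H}^0_S(H^k)=0$ (the analytic version of Proposition \ref{h0-depth}, part 4, applied to the sheaf $H^k$ supported on $S$) rather than from the paper's terser appeal to $\iota_*$ commuting with $d$, which if anything makes the final step slightly more explicit.
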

\begin{proof}
We have
$\mathcal{H}omgr_{\Omega_X^{\sharp}}(\II, G) = \bigoplus_{k\in\mathbb{Z}} \mathcal{H}om_{\Omega_X^{\sharp}}(\II, G[k])$
with differential defined as in \ref{differential}. The claim amounts to the exactness of the short complex
\begin{equation}
\xymatrix{
\mathcal{H}om_{\Omega_X^{\sharp}}(\II,G[k-1])^h \ar[r]^{d^{k-1}} & \mathcal{H}om_{\Omega_X^{\sharp}}(\II,G[k])^h  \ar[r]^{d^k} & 
\mathcal{H}om_{\Omega_X^{\sharp}}(\II,G[k+1])^h. 
}
\label{short complex1} 
\end{equation}
By Remark \ref{complex O} it suffices to prove the exactness of the isomorphic complex
\begin{equation}
\xymatrix{
\mathcal{H}om_{\OO_X}(\II^1,G^{k})^h \ar[r]^{d_{\OO}^{k-1}} & \mathcal{H}om_{\OO_X}(\II^1,G^{k+1})^h  \ar[r]^{d_{\OO}^k} & \mathcal{H}om_{\OO_X}(\II^1,G^{k+2})^h.
 }
\label{short complex2} 
\end{equation}
This complex may be called a \emph{twisted De Rham} complex of $G$.

\

\noindent
Let us work first in $U = X - S$.

\

\noindent
Let $\varphi^1$ be a local section of $\mathcal{H}om_{\OO_X}(\II^1,G^{k+1})^h$ such that $d_{\OO}^k(\varphi^1) = 0$, that is, by \ref{differential3}, 
$d^{k+1}_G \circ \varphi^1 = \varphi^2 \circ d^{1}_{\II}$. Then for each local section $\omega$ of $\II^1$ such that $d^{1}_{\II}(\omega) = 0$ ($\omega$ is closed) we have
$d^{k+1}_G(\varphi^1(\omega)) = 0$. By hypothesis we know that the complex
\begin{equation}
\xymatrix{
(G^h)^{k} \ar[r]^{d_G^{k}} & (G^h)^{k+1}  \ar[r]^{d_G^{k+1}} & (G^h)^{k+2}
 }
\label{short complex3} 
\end{equation}
is exact. Therefore there exists a local section $\psi_{\omega}$ of $G^{k}$ such that 

\noindent
$\varphi^1(\omega) = d_G^{k}((-1)^{k-1} \psi_{\omega})$. 
Choose a local basis $\omega_1, \dots, \omega_q$ of $\II^1$ such that each $\omega_i$ is closed, in the analytic topology. 
Then there exists a local section $\psi^1$ of
$\mathcal{H}om_{\OO_X}(\II^1,G^{k})^h$ such that $\psi^1(\omega_i) = \psi_{\omega_i}$ for $i = 1, \dots, q$. 
It follows that $\varphi^1 = d_{\OO}^{k-1}(\psi^1)$, which shows that \ref{short complex2} is exact.
Therefore we have $\HH^{k}(\mathcal{H}omgr_{\Omega_X^{\sharp}}(\II,G)^h|_U), d)=0$.

\

\noindent
Let $Z^k$ and $B^k$ be the kernel and the image of $d$ at position $k$. We already know that $B^k|_U = Z^k|_U$. Applying $i_*$ and using the fact that $G$ is reflexive, we get from Proposition  \ref{h0-depth}
\[
B^k = i_*(B^k|_U) = i_* (Z^k|_U)= Z^k.
\]
where $i:U\to X$ is the inclusion. Then $\HH^{k}(\mathcal{H}omgr_{\Omega_X^{\sharp}}(\II,G)^h), d)=0$.  
\end{proof}

\medskip

\begin{remark}  If we assume that in $U = X - S$, $\II$ is locally (in the Zariski topology) generated by exact 1-forms then the result of
Proposition \ref{h1} also holds in the Zariski topology. The proof is the same.
\end{remark}

\medskip

\begin{proposition} \label{lie derivative2}
Let $\II$ be a singular integrable Pfaff ideal with singular set $S=S(\II)$,
and let $G$ be an $\Omega_X^{\sharp}$-Module  as in \ref{hipotesis G}.
Assume also that $\HH^1(G^h, d)=0$. Then we have an epimorphism of sheaves of complex vector spaces in the analytic topology
$$L: \mathcal{H}om_{\OO_X}(\II^1, G^0)^h \to \mathcal{H}om_{\Omega_X}(\II, G)^h$$
such that for  $V \in \mathcal{H}om_{\OO_X}(\II^1, G^0)^h$, 
$$L(V) = d \circ i(V) + i(V) \circ d,$$ 
where $i(V) \in \mathcal{H}om_{\Omega_X^{\sharp}}(\II,G[-1])^h$ extends $V$ as in \ref{i}.

\

\noindent
We obtain the exact sequence of sheaves of complex vector spaces in the analytic topology
\begin{equation}
0 \to \ker L \to  \mathcal{H}om_{\OO_X}(\II^1, G^0)^h \to \mathcal{H}om_{\Omega_X}(\II, G)^h \to 0 \label{sucesion}
\end{equation}
and $\ker L = \{V \in \mathcal{H}om_{\OO_X}(\II^1, G^0)^h / d \circ i(V) + i(V) \circ d=0\} \cong \mathcal{H}om_{\Omega_X}(\II,G[-1])^h$.
\end{proposition}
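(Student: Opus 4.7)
The plan is to recognize the formula $L(V) = d \circ i(V) + i(V) \circ d$ as Cartan's homotopy formula, exhibiting $L(V)$ as the image of $i(V)$ under the coboundary operator of the complex $(\mathcal{H}omgr_{\Omega_X^\sharp}(\II, G)^h, d)$. Indeed, from the sign convention in \ref{differential}, for any $f \in \mathcal{H}om_{\Omega_X^\sharp}(\II, G[-1])^h$ one has $df = d_G \circ f - (-1)^{-1} f \circ d_\II = d_G \circ f + f \circ d_\II$; applied to $f = i(V)$, this is precisely $L(V) = d(i(V))$. In particular, $L(V)$ is automatically $\Omega_X^\sharp$-linear of degree $0$ and satisfies $d(L(V)) = d^2(i(V)) = 0$, so by \ref{Z^0} it lies in $\mathcal{H}om_{\Omega_X}(\II, G)^h$. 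This shows $L$ is well defined and factors as $L = d \circ i$.

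Next I would note that the extension map $i$ is itself an isomorphism. Since $\II \cdot G = 0$ forces $\II \cdot G[-1] = 0$, and the depth hypothesis is insensitive to grading shifts, Proposition \ref{poincare} applied to $G[-1]$ (with inverse the map $\iota$ constructed in \ref{i}) yields an isomorphism
\[
i: \mathcal{H}om_{\OO_X}(\II^1, G^0)^h \stackrel{\sim}{\longrightarrow} \mathcal{H}om_{\Omega_X^\sharp}(\II, G[-1])^h
\]
of sheaves in the analytic topology.

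Surjectivity of $L$ then follows from Proposition \ref{h1} with $k = 0$: the hypothesis $\HH^1(G^h, d) = 0$ gives $\HH^0(\mathcal{H}omgr_{\Omega_X^\sharp}(\II, G)^h, d) = 0$, i.e.\ the image of $d$ in degree zero exhausts the kernel $Z^0 = \mathcal{H}om_{\Omega_X}(\II, G)^h$. Composing with the isomorphism $i$ gives surjectivity of $L = d \circ i$. This application of Proposition \ref{h1} is the substantive step, since it relies on the holomorphic Poincar\'e lemma for the complex $(G^h, d)$; this is the main obstacle, and it is precisely why the statement is formulated in the analytic category rather than the Zariski one.

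For the kernel, I would observe that $\ker L = i^{-1}(\ker d)$, where the kernel of $d$ on $\mathcal{H}om_{\Omega_X^\sharp}(\II, G[-1])^h$ equals $\mathcal{H}om_{\Omega_X}(\II, G[-1])^h$ by the exact sequence \ref{cicles} applied to $G[-1]$. Since $i$ is an isomorphism, $\ker L \cong \mathcal{H}om_{\Omega_X}(\II, G[-1])^h$, and unwinding the definition of $i$ yields the explicit description $\{V : d \circ i(V) + i(V) \circ d = 0\}$. Assembling the four steps produces the asserted short exact sequence of sheaves in the analytic topology.
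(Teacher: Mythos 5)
Your proposal is correct and follows essentially the same route as the paper: identify $L$ with $d \circ i$ in the complex $(\mathcal{H}omgr_{\Omega_X^{\sharp}}(\II,G)^h,d)$, invoke Proposition \ref{h1} with $k=0$ to get $Z^0 = B^0$, and use Proposition \ref{poincare} applied to $G[-1]$ to identify the degree $-1$ term with $\mathcal{H}om_{\OO_X}(\II^1,G^0)^h$. Your explicit checks that $L(V)$ lands in $\mathcal{H}om_{\Omega_X}(\II,G)^h$ and that $\ker L \cong \mathcal{H}om_{\Omega_X}(\II,G[-1])^h$ via \ref{cicles} are correct details that the paper's proof leaves implicit.
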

\begin{proof}
Our hypothesis $\HH^1(G^h, d)=0$ and Proposition \ref{h1} imply that 
$$\HH^{0}(\mathcal{H}omgr_{\Omega_X^{\sharp}}(\II,G)^h, d)=0.$$ 
Therefore, with notation as in \ref{boundaries}, 
$$Z^{0}(\mathcal{H}omgr_{\Omega_X^{\sharp}}(\II,G)^h, d)= B^{0}(\mathcal{H}omgr_{\Omega_X^{\sharp}}(\II,G)^h, d).$$
Recall from \ref{Z^0} that $Z^{0}(\mathcal{H}omgr_{\Omega_X^{\sharp}}(\II,G)^h, d) = \mathcal{H}om_{\Omega_X}(\II, G)^h$. On the other hand, from \ref{boundaries},
$$B^{0}(\mathcal{H}omgr_{\Omega_X^{\sharp}}(\II,G)^h, d) = \im(d: \mathcal{H}om_{\Omega_X^{\sharp}}(\II,G[-1])^h \to \mathcal{H}om_{\Omega_X^{\sharp}}(\II,G))^h.$$
By Proposition \ref{poincare}, $\mathcal{H}om_{\Omega_X^{\sharp}}(\II,G[-1])^h \cong \mathcal{H}om_{\OO_X}(\II^1, G^0)^h$, and hence $L$ is surjective.
\end{proof}

\

\

\noindent
Next we prove a Theorem on  vanishing of $\mathcal{E}xt$ for coherent $\OO_X$-Modules. 
It will be an application of results in \cite{grothendieck2005cohomologie}, which are valid for $X$ a locally noetherian prescheme.

\

\noindent
For related material see   \cite{matsumura1989commutative} (Ischebeck's Theorem), \cite{jinnah1975reflexive} (Lemma 1.1), 
\cite{jothilingam2016projectivity} (Lemma 4), and especially \cite{kleiman1971geometry} (Prop. 2.2.3), and \cite{schlessingerthesis} (Lemma 2, p. 62).

\

\begin{theorem} \label{ext=0}
Let $F$ and $G$ be coherent $\OO_X$-Modules and
let $Y \subset X$ be a closed subset such that $F$ is locally free on $U = X - Y$.
Denote 
$$m = \min\{\depth_Y G -1, \depth_Y \mathcal{H}om_{\OO_X}(F, G) - 2\}.$$
Then 
$$\mathcal{E}xt^j_{\OO_X}(F, G)=0, \  \text{for} \ 1 \le j \le m.$$ 
\end{theorem}
\begin{proof} We consider the sheaves $\mathcal{E}xt^j_{Z}(F, G)$ introduced in Expos\'e VI of \cite{grothendieck2005cohomologie},
where the equivalent notation $\underline{\Ext}^j_{Z}(F, G)$ is used. 

\noindent
For $Z \subset X$ locally closed, $Z' \subset Z$ closed in $Z$, and $Z'' = Z - Z'$, there exists a long exact sequence 
\begin{equation}
\xymatrix{
0 \ar[r] & \mathcal{H}om_{Z'}(F, G)  \ar[r] & \mathcal{H}om_{Z}(F, G)    \ar[r] &\mathcal{H}om_{Z''}(F, G)  \ar[r] &  \dots \\
\ar[r] & \mathcal{E}xt^j_{Z'}(F, G)  \ar[r] &  \mathcal{E}xt^j_{Z}(F, G)   \ar[r] &  \mathcal{E}xt^j_{Z''}(F, G) \ar[r] &  \dots
}
\label{sga1}
\end{equation}
\ref{sga1} is the sheaf version of  \cite{grothendieck2005cohomologie}, VI, Theorem 1.8 and it is obtained by applying the functor  $\mathcal{H}om_{\OO_X}(-, G)$ 
to the exact sequence (1.7.2) as in the deduction of Theorem 1.8.
See also loc. cit., I, Corollaries to Theorem 2.8, or \cite{MR0224620}, Proposition 1.9.

\noindent
Taking $Z = X$, $Z'= Y$ and $Z''= U = X - Y$ we obtain an exact sequence
\begin{equation}
\xymatrix{
0 \ar[r] & \mathcal{H}om_{Y}(F, G)  \ar[r] &  \mathcal{H}om_{\OO_X}(F, G)   \ar[r] & \mathcal{H}om_{U}(F, G) \ar[r] & \dots \\
\ar[r] & \mathcal{E}xt^j_{Y}(F, G)  \ar[r] &  \mathcal{E}xt^j_{\OO_X}(F, G)   \ar[r] &    \mathcal{E}xt^j_{U}(F, G) \ar[r] & \dots
}
\label{sga2}
\end{equation}

\noindent
We claim that 
\begin{equation}
\mathcal{E}xt^j_{U}(F, G) = 0, \  \text{for}  \   1 \le j \le \depth_Y \mathcal{H}om_{\OO_X}(F, G) - 2.
\label{claim}
\end{equation}

\noindent
To see this,  we use the spectral sequence converging to
$\mathcal{E}xt^{p+q}_{U}(F, G)$ with initial terms 
$$\mathcal H^p_U( \mathcal{E}xt^q_{\OO_X}(F, G)),$$
obtained by taking derived functors on the equality of sheaves 
$$\mathcal{H}om_{U}(F, G) = \mathcal H^0_U( \mathcal{H}om_{\OO_X}(F, G)),$$
as in  VI, Theorem 1.6. 

\noindent
By  I, Corollary 2.5, we know that
$$\mathcal H^p_U (\mathcal{E}xt^q_{\OO_X}(F, G)) = R^p \iota_* (\mathcal{E}xt^q_{\OO_X}(F, G)|_U),$$
where $\iota: U \to X$ is the natural inclusion. 

\noindent
Since $F$ and $G$ are coherent and $F|_U$ is locally free, we have
$$\mathcal{E}xt^q_{\OO_X}(F, G)|_U = \mathcal{E}xt^q_{\OO_U}(F|_U, G|_U) = 0,$$ 
for all $q \ge 1$. Therefore $\mathcal H^p_U( \mathcal{E}xt^q_{\OO_X}(F, G)) = 0$ for $q \ge 1$.

\noindent
The remaining initial terms, with $q=0$, are 
$$\mathcal H^p_U( \mathcal{H}om_{\OO_X}(F, G)) = R^p \iota_* (\mathcal{H}om_{\OO_X}(F, G)|_U).$$
By  I, Corollary 2.11, 
$$R^p \iota_* (\mathcal{H}om_{\OO_X}(F, G)|_U) = \mathcal H^{p+1}_Y (\mathcal{H}om_{\OO_X}(F, G)),$$
and by III, Lemma 3.1 (i) and Proposition 3.3 (iv) (or  \cite{MR0224620}, Theorem 3.8), 
$$\mathcal H^{p+1}_Y (\mathcal{H}om_{\OO_X}(F, G)) = 0,$$
if $p+1 <  \depth_Y \mathcal{H}om_{\OO_X}(F, G)$. Thus we obtain \ref{claim}.

\

\noindent
Combining  \ref{claim} with \ref{sga2} we get a five-term exact sequence
\begin{equation}
\xymatrix{
0  \ar[r] & \mathcal{H}om_{Y}(F, G)    \ar[r] & \mathcal{H}om_{\OO_X}(F, G)  \ar[r] & \iota_* \mathcal{H}om_{\OO_U}(F|_U, G|_U)  \\
\ar[r] & \mathcal{E}xt^1_{Y}(F, G)  \ar[r] &  \mathcal{E}xt^1_{\OO_X}(F, G)   \ar[r] & 0,
 }
\label{sgab}
\end{equation}
and isomorphisms
\begin{equation}
 \mathcal{E}xt^j_{Y}(F, G) \cong  \mathcal{E}xt^j_{\OO_X}(F, G),  \  \text{for}  \   2 \le j \le \depth_Y \mathcal{H}om_{\OO_X}(F, G) - 2.
\label{sgac}
\end{equation}

\noindent
Again by \cite{MR0224620}, Theorem 3.8,  we know that   $\mathcal{H}^j_Y(G) = 0$ for $0 \le j < \depth_Y G$. 
Applying VII, Proposition 1.2, $(i) \Leftrightarrow (iv)$, it follows that  $\mathcal{E}xt^j_{Y}(F, G) = 0$ for $0 \le j <  \depth_Y G$. 
Replacing into \ref{sgab} and \ref{sgac} we obtain $\mathcal{E}xt^j_{\OO_X}(F, G) = 0$ for $1 \le j \le m$, as we wanted to prove.
 \end{proof}

\

\begin{corollary} \label{cor ext=0}
With the notation of Theorem \ref{ext=0}, if 
$$\depth_Y G \ge 2 \ \text{and} \ \depth_Y \mathcal{H}om_{\OO_X}(F, G) \ge 3,$$ 
then
$\mathcal{E}xt^1_{\OO_X}(F, G)=0.$ 
\end{corollary}

\newpage

\section{Irreducible components of moduli spaces.}

\

\subsection{On reduced irreducible components.}

 \
 
 \noindent
In this subsection we state a general Proposition which is useful  to determine reduced irreducible components of moduli spaces. This was used in 
some other works, like \cite{cukierman2019stability}, \cite{cukierman2008stability}, \cite{cukierman2009stability}.

\

\begin{proposition}\label{compo}
Let $X$ be a reduced and irreducible scheme.
Let $f:X\to Y$ be a map,
let $x\in X$ be a non-singular point and let $y=f(x)\in Y$.

If $df(x):TX(x)\to TY(y)$
is surjective, then $\overline{f(X)}$ is a reduced and irreducible
component of $Y$.
\end{proposition}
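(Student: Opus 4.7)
The plan is to use Chevalley's fiber dimension inequality together with the surjectivity of $df(x)$ to force $\overline{f(X)}$ to have maximal possible dimension at $y$, and then to read off regularity (and hence reducedness) of the local ring $\OO_{Y,y}$.

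Set $Z=\overline{f(X)}$; since $X$ is irreducible, so is $Z$, which handles the irreducibility of the component. Restricting the codomain, $f:X\to Z$ is dominant between irreducible Noetherian schemes, so the fiber dimension inequality gives $\dim_x f^{-1}(y)\ge \dim X - \dim Z$. Because $x$ is non-singular on $X$, we have $\dim TX(x)=\dim X$, and the Zariski tangent space to the scheme-theoretic fiber is $T_x(f^{-1}(y))=\ker(df(x))$; the surjectivity of $df(x)$ thus yields $\dim \ker df(x) = \dim X - \dim TY(y)$. Combining,
\[
\dim X - \dim Z \;\le\; \dim_x f^{-1}(y) \;\le\; \dim T_x\bigl(f^{-1}(y)\bigr) \;=\; \dim X - \dim TY(y),
\]
so $\dim Z \ge \dim TY(y)\ge \dim_y Y$, where $\dim_y Y$ denotes the Krull dimension of $\OO_{Y,y}$.

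Every irreducible component $W$ of $Y$ containing $y$ satisfies $\dim W \le \dim_y Y$, and the irreducible $Z$ must lie in some such $W$; the inequalities $\dim Z \le \dim W \le \dim_y Y \le \dim Z$ then collapse to equalities, giving $Z=W$. Hence $Z$ is an irreducible component of $Y$. The same collapse yields $\dim TY(y)=\dim_y Y$, which says that $\OO_{Y,y}$ is a regular local ring, hence an integral domain, hence reduced. On a locally Noetherian scheme, reducedness of a single stalk propagates to an open neighborhood (the annihilator of a finite set of local generators of the coherent nilradical sheaf cuts out such an open), producing a dense open subset of $Z$ on which the scheme structure inherited from $Y$ agrees with the reduced one. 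This is precisely what it means for $Z$ to be a reduced irreducible component of $Y$.

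The only non-formal input is the fiber dimension lower bound $\dim_x f^{-1}(y) \ge \dim X - \dim Z$, which requires $f:X\to Z$ to be dominant between irreducible schemes of finite type over $\C$; in the intended application this is automatic. Everything else is a dimension count plus the standard inequalities $\dim_y Y \le \dim TY(y)$ and $\dim_x f^{-1}(y) \le \dim T_x(f^{-1}(y))$, so no serious obstacle beyond setting the inequalities in the right order.
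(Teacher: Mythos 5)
Your proof is correct, and it takes a genuinely different route from the paper's. The paper's argument is a two-line sketch: since $X$ is reduced, $f$ factors through $Y_{\mathrm{red}}$, so $df(x)(TX(x))\subseteq TY_{\mathrm{red}}(y)\subseteq TY(y)$, and surjectivity forces these tangent spaces to coincide, from which the authors read off reducedness at $y$ and the dimension statement, leaving all the bookkeeping implicit. You never use the factorization through the reduction; instead you bound $\dim TY(y)$ above by $\dim\overline{f(X)}$ via Chevalley's fiber-dimension inequality combined with the standard identification $T_x(f^{-1}(y))=\ker df(x)$ and rank--nullity, then conclude that $\OO_{Y,y}$ is regular from $\dim TY(y)=\dim\OO_{Y,y}$, and propagate reducedness from the stalk to a neighborhood via coherence of the nilradical --- which is exactly the right reading of ``reduced irreducible component'' as the paper later uses it (generic reducedness along the component). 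Your route buys three things: it supplies the dimension count that the paper elides (note that equality $TY(y)=TY_{\mathrm{red}}(y)$ by itself only says the nilradical lies in $\mathfrak{m}_y^2$, as the example $\C[x,y]/(x^2y^2)$ at the origin shows, so some count like yours is genuinely needed to get reducedness); it yields the slightly stronger local conclusion that $\OO_{Y,y}$ is regular, not merely reduced; and it honestly flags the hypotheses the bare statement suppresses, namely finite type over $\C$ for the fiber-dimension inequality and, implicitly, that $x$ and $y$ are closed points so that $\dim TX(x)=\dim X$ and $\dim W\le\dim_y Y$ for every component $W\ni y$ --- assumptions the paper's proof also needs but does not state. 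What the paper's route buys in exchange is brevity and the reusable principle that morphisms from reduced schemes factor through the reduction of the target.
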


\begin{proof}
Since a ring map sends nilpotents to nilpotents, $f$ factorizes through $Y_{red}$ . Then,
\[
df(x) (TX(x))\subseteq TY_{red}(y)\subseteq TY(y).
\]
Using the hypothesis, this implies that $y$ is a reduced point and that 
the dimension of $\overline{f(X)}$ coincides with
the dimension of $Y$ at $y$.
\end{proof}

\

\medskip

\subsection{Moduli maps for families of exterior differential ideals.} \label{moduli maps}

\

\noindent
Let $f: X  \dashrightarrow Y$ be a rational map. We shall denote $f(X)$ the closure of the image of the rational map $f$, that is, the Zariski closure of $f(U)$ in $Y$, where $U \subset X$ is any dense Zariski open such that $f$ is a morphism on $U$. 
 
 \
 
 \noindent
Let $f: \XX \to S$ be a morphism of schemes and let $\II \subset \Omega_{\XX/S}$ be a family of differential graded ideals parametrized by $S$ as in Definition \ref{def family}.
When $\XX = X \times S$ and $f: X \times S \to S$  is the trivial family ($f$ is the natural projection)  denote 
$$\mu_{\II}: S \to \Hilb(\Omega_X)$$ 
the moduli map induced by $\II$, according to Definition \ref{def hilb}.

\

\noindent
Assume $S$ is reduced and irreducible. If $\II \subset \Omega_{X \times S/S}$ is a coherent sheaf of differential graded ideals, where $\Omega_{X \times S/S}/\II$ is not necessarily flat over $S$, by generic flatness (see \cite{mumford1966lectures}) there exists a dense Zariski open $U \subset S$ such that $(\Omega_{X \times S/S}/\II)|_{X \times U}$ is flat over $U$. Therefore we have a morphism 
$\mu_{\II|_{X \times U}}: U \to \Hilb(\Omega_X)$, and hence a rational map
$$\mu_{\II}: S \dashrightarrow \Hilb(\Omega_X).$$
We denote, as above, $\mu_{\II}(S) \subset \Hilb(\Omega_X)$ the closure of the image of this rational map.

\

\noindent
Also, if $\II \subset \Omega_{X \times S/S}$ is again a coherent sheaf of differential graded ideals, consider its double-dual or saturation 
as $\OO_{X \times S}$-Module $\bar \II \subset \Omega_{X \times S/S}$. 
Then $\bar \II$ is a coherent sheaf of differential graded ideals, as in Proposition \ref{saturation1}. Hence we have a rational moduli map
$$\mu_{\bar \II}: S \dashrightarrow \Hilb(\Omega_X).$$

\

\newpage

\section{Stability of sums of differential ideals.} \label{section pfaff ideals}   

\

\subsection{First theorem on stability.}

\

\noindent
Let $\II = \langle\omega_1, \dots, \omega_q\rangle$ be a singular integrable Pfaff ideal generated by twisted one-forms $\omega_1, \dots, \omega_q$.
The Frobenius condition \ref{frobenius} holds, in particular, if each $\omega_j$ satisfies it individually, that is, $ \omega_j \wedge d\omega_j  = 0$ for all $j$. 
In this case, $\II = \langle\omega_1, \dots, \omega_q\rangle = \sum_{j=1}^q \II_j$
where each $\II_j = \langle\omega_j\rangle$ is a singular integrable Pfaff ideal \ref{pfaff ideals}.
We shall relate the deformations of $\II$ with the deformations of the individual $\II_j$'s. This will allow us to obtain stability results for singular integrable Pfaff ideals.

\

\noindent
More generally, let us fix $q$ and study differential graded ideals $\II \subset \Omega_X$ of the form 
$$\II =  \sum_{j=1}^q \II_j$$ 
where each $\II_j \subset \Omega_X$ is a differential graded ideal. We intend to relate the deformations of $\II$ with the deformations of the $\II_j$'s.

\

\begin{remark}
If each $\II_j$ is a singular integrable Pfaff ideal then $\II =  \sum_{j=1}^q \II_j$ is a singular integrable Pfaff ideal, and its matrix of one-forms $(\eta_{ij})$ as in \ref{frobenius} has a block structure, each block corresponding to each $\II_j$. In case each $\II_j = \langle\omega_j\rangle$ is generated by an integrable one-form $\omega_j$, the matrix $(\eta_{ij})$ for 
$\II = \langle\omega_1, \dots, \omega_q\rangle$ is diagonal.
\end{remark}
 
 \
 
 \noindent
 Let us fix  an integer $q$ such that $1 \le q \le n = \dim X$. For $j = 1, \dots, q$ let $\mathcal{F}_j \subset  \Hilb(\Omega_X)$ be an irreducible component.
 We shall assume that each  $\mathcal{F}_j$  is \emph{generically reduced}.
 Denote $\tilde \II_j$ the corresponding universal differential graded ideal on $X \times \mathcal{F}_j$. 
 Let $\pi_j: X \times \prod_i \mathcal{F}_i \to X \times \mathcal{F}_j$ be the canonical projection and consider $\tilde \II = \sum_j \pi_j^*(\tilde \II_j)$. 
 Then $\tilde \II$ is a coherent sheaf of differential graded ideals on $X \times \prod_i \mathcal{F}_i$ and as in \ref{moduli maps} we have a rational map
  
\begin{equation}
 \mu_{\tilde \II}:  \mathcal{F}_1 \times \dots \times \mathcal{F}_q  \dashrightarrow  \Hilb(\Omega_X)   \label{sigma}
\end{equation}
such that $\mu_{\tilde \II}(\II_1, \dots, \II_q)  = \sum_{j=1}^q \II_j$, for $\II_j$ a general point of $\mathcal{F}_j$. Notice that each $\Omega/\tilde \II_j$ is flat over $\mathcal{F}_j$, 
but $\Omega/\tilde \II$ is not necessarily flat over  $\prod_i \mathcal{F}_i$; for this reason  $\mu_{\tilde \II}$ is a rational map, but not necessarily a regular map.

 \
 
 \noindent
We shall denote
\begin{equation}
\langle\mathcal{F}_1, \dots, \mathcal{F}_q \rangle \ = \  \mu_{\tilde \II}(\mathcal{F}_1 \times \dots \times \mathcal{F}_q), \label{sigma2}
\end{equation}
the closure of the image of the rational map  $\mu_{\tilde \II}$, that is, the collection of all the differential graded ideals of the form  
$\sum_{j=1}^q \II_j$ 
where $(\II_1, \dots, \II_q)$ is a general point of  $\mathcal{F}_1 \times \dots \times \mathcal{F}_q$. 

\

\noindent
We assume that each $\mathcal{F}_j$ is a projective variety. 
Then, since  $\langle\mathcal{F}_1, \dots, \mathcal{F}_q\rangle$ is the image of an irreducible projective variety, it is an irreducible subvariety of $\Hilb(\Omega_X)$.

\
 
 \noindent
Our main results, Theorem (A)  \ref{theorem pfaff}  and Theorem (B)  \ref{theorem pfaff2}, will give conditions under which $\langle\mathcal{F}_1, \dots, \mathcal{F}_q\rangle$ is a reduced irreducible component of 
$\Hilb(\Omega_X)$. Before stating them, we need to introduce some more notation.

\

\noindent
Let
\begin{equation}
s: \bigoplus_i \II_i \to  \Omega_X  \label{sum}
\end{equation}
denote the sum map $s(\omega_1, \dots, \omega_q) = \sum_i \omega_i$ for $\omega_i \in \II_i$. It is a homomorphism of $\Omega_X$-Modules with image $\II = \sum_i \II_i$. 

\

\noindent
Denote
\begin{equation}
R = \ker s = \left\{(\omega_1, \dots, \omega_q)\ \colon \ \omega_i \in \II_i,  
\ \sum_i \omega_i = 0\right\}.  \label{R2} 
\end{equation}
One has the exact sequence of $\Omega_X$-Modules
\begin{equation}
\xymatrix{
 0 \ar[r]   &R \ar[r]^-{\iota} &\bigoplus_i \II_i \ar[r]^-s &\II \ar[r] &0,  
 }
 \label{R} 
\end{equation}
where $\iota$ denotes the natural inclusion.

\

\noindent
Applying the functor $F =  \mathcal{H}om_{\Omega_X}( - , \Omega_X/\II)$ to \ref{R} we obtain the exact sequence 
\begin{equation}\label{diagram2}
\small{
\xymatrix{
0 \ar[r] &\mathcal{H}om_{\Omega_X}(\II,\Omega_X/\II)   \ar[r]^-a &\mathcal{H}om_{\Omega_X}(\bigoplus_i \II_i, \Omega_X/\II)    \ar[r]^-r   &\mathcal{H}om_{\Omega_X}(R,\Omega_X/\II) \\
 \ar[r]   &\mathcal{E}xt^1_{\Omega_X}(\II,\Omega_X/\II)   \ar[r] &\mathcal{E}xt^1_{\Omega_X}(\bigoplus_i \II_i, \Omega_X/\II)    \ar[r]   &\mathcal{E}xt^1_{\Omega_X}(R,\Omega_X/\II), 
 }   
 }
\end{equation}
where $a = F(s)$ and $r = F(\iota)$. 

\

\noindent
Now we may state one of our main results.

\

\begin{theorem}[A] \label{theorem pfaff} Let $\mathcal{F}_j$ be an irreducible component of  $\Hilb(\Omega_X)$, for $j= 1, \dots, q$.
For $\II_j \in \mathcal{F}_j$ denote $\II = \sum_{j=1}^q \II_j$. Let us assume:

\

\noindent
a)  $\Hilb(\Omega_X)$ is reduced at a general point of  $\mathcal{F}_j$, for all $j= 1, \dots, q$.

\

\noindent
b) The natural maps $\Hom_{\Omega_X}({\II_j},{\Omega_X/\II_j}) \to \Hom_{\Omega_X}({\II_j},{\Omega_X/\II})$ are surjective, 
for $\II_j \in \mathcal{F}_j$ a general point and for all $j= 1, \dots, q$.

\

\noindent
Then the derivative of  $\mu_{\tilde \II}$  (\ref{sigma})  at a general point is surjective. 
Therefore, $\langle\mathcal{F}_1, \dots, \mathcal{F}_q\rangle$ (\ref{sigma2}) is an irreducible component of $\Hilb(\Omega_X)$. 
Also, $\Hilb(\Omega_X)$ is reduced at a general point of $\langle\mathcal{F}_1, \dots, \mathcal{F}_q\rangle$.
\end{theorem}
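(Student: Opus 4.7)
The plan is to invoke Proposition \ref{compo} applied to the rational map $f = \mu_{\tilde \II}$ from $\mathcal{F}_1 \times \dots \times \mathcal{F}_q$ to $\Hilb(\Omega_X)$, at a general point $(\II_1,\dots,\II_q)$. Hypothesis (a) guarantees that each $\mathcal{F}_j$ is reduced (and therefore smooth, since it is irreducible) at its general point, hence so is the product, making $(\II_1,\dots,\II_q)$ a non-singular point of the source. By Proposition \ref{first order deformations}, the tangent space there decomposes as $\bigoplus_j \Hom_{\Omega_X}(\II_j,\Omega_X/\II_j)$, while the target tangent space is $T_{\II}\Hilb(\Omega_X) = \Hom_{\Omega_X}(\II,\Omega_X/\II)$. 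Once I show that $d\mu_{\tilde \II}$ is surjective at such a general point, Proposition \ref{compo} immediately yields that $\langle\mathcal{F}_1,\dots,\mathcal{F}_q\rangle$ is a reduced and irreducible component of $\Hilb(\Omega_X)$.

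Next, I would describe $d\mu_{\tilde \II}$ concretely. A tangent vector $(\varphi_j)$ corresponds to first-order deformations $\II_{j,\varepsilon} = \II_j \times_{\varphi_j}\Omega$ given by formula \ref{deformation formula}. Their sum $\sum_j \II_{j,\varepsilon}$ is a first-order deformation of $\II$ (flat over $\C[\varepsilon]$) at a general point where $\mu_{\tilde \II}$ is a morphism, and hence corresponds to an element $d\mu_{\tilde \II}(\varphi_j) \in \Hom_{\Omega_X}(\II,\Omega_X/\II)$. Unravelling the definition in terms of the lift of $\omega = \sum_j \omega_j \in \II$ to $\sum_j (\omega_j + \varepsilon \eta_j)$ with $\varphi_j(\omega_j) = \bar \eta_j \in \Omega_X/\II_j$, I see that $d\mu_{\tilde \II}(\varphi_j)$ is the unique $\varphi$ whose image under the injection $a$ of \ref{diagram2} equals $\rho(\varphi_j) := (\pi_j \circ \varphi_j)_j$, where $\pi_j \colon \Omega_X/\II_j \twoheadrightarrow \Omega_X/\II$ is the natural quotient. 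In other words, $d\mu_{\tilde \II} = a^{-1} \circ \rho$, with the image of $\rho$ landing in $\ker r = \im(a)$ precisely because the sum $\sum_j \II_{j,\varepsilon}$ is flat at the general point (by generic flatness).

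The surjectivity of the derivative then reduces to a direct check using hypothesis (b). Given any $\varphi \in \Hom_{\Omega_X}(\II,\Omega_X/\II)$, its restrictions $\varphi|_{\II_j}$ assemble into $a(\varphi) \in \bigoplus_j \Hom_{\Omega_X}(\II_j,\Omega_X/\II)$. Hypothesis (b) allows me to lift each $\varphi|_{\II_j}$ along the surjection $\Hom_{\Omega_X}(\II_j,\Omega_X/\II_j) \twoheadrightarrow \Hom_{\Omega_X}(\II_j,\Omega_X/\II)$ to some $\varphi_j \in \Hom_{\Omega_X}(\II_j,\Omega_X/\II_j)$. Then $\rho(\varphi_1,\dots,\varphi_q) = a(\varphi)$, and by injectivity of $a$, the tuple $(\varphi_j)$ maps to $\varphi$ under $d\mu_{\tilde \II}$, proving surjectivity.

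The main obstacle I anticipate is the concrete identification of $d\mu_{\tilde \II}$ with $a^{-1} \circ \rho$: this requires a careful chase through formula \ref{deformation formula} applied to sums, together with the verification that $\rho(\varphi_j)$ does land in $\ker r$ at the general point (essentially flatness of $\Omega_\varepsilon/\sum_j \II_{j,\varepsilon}$ over $\C[\varepsilon]$, which is automatic where $\mu_{\tilde \II}$ is a morphism). Once this factorization is in place, the surjectivity argument using (b) is formal and Proposition \ref{compo} finishes the proof.
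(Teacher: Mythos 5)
Your proposal is correct and takes essentially the same route as the paper: the factorization $d\mu_{\tilde \II} = a^{-1}\circ\rho$ is exactly the paper's commutative diagram \ref{diagram3} (your $\rho$ is the paper's $a'$), and you prove surjectivity by the same lifting via hypothesis b) together with injectivity of $a$, concluding with Proposition \ref{compo}. The paper likewise computes the derivative through the first-order deformations $\II_i \times_{\varphi_i} \Omega$ and their sum, so even the step you flagged as the main obstacle is handled there just as you sketch it.
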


\begin{proof}
For $(\II_1, \dots, \II_q) \in \mathcal{F}_1 \times \dots \times \mathcal{F}_q$ a general point, let us analyse the derivative of $\mu_{\tilde \II}$ at $(\II_1, \dots, \II_q)$:
$$d \mu_{\tilde \II} (\II_1, \dots, \II_q): \bigoplus_{i=1}^q \Hom_{\Omega_X}(\II_i,\Omega_X/\II_i) \to  \Hom_{\Omega_X}(\II,\Omega_X/\II).$$
Here we used the description of the tangent space of $\Hilb(\Omega_X)$ from Corollary \ref{tangent obstruction}.

 \
 
 \noindent
 To simplify the notation, let us write $d \mu_{\tilde \II} (\II_1, \dots, \II_q) = d \mu$, when we fix $\II_i \in \mathcal{F}_i$.

\

\noindent
Our main goal is to show that $d \mu$ is surjective.

\

\noindent
This surjectivity, combined with hypothesis a) and Proposition \ref{compo} readily imply the next claim: $\langle\mathcal{F}_1, \dots, \mathcal{F}_q\rangle$ 
is an irreducible component of $\Hilb(\Omega_X)$ and $\Hilb(\Omega_X)$ is reduced at the general point of this component.

\

\noindent
Let us make explicit the derivative $d \mu$. 

\

\noindent
Denote $\pi_i: \Omega_X/\II_i \to \Omega_X/\II$ the natural projections. We have exact sequences of $\Omega_X$-Modules
\begin{equation}
 0 \to \II/\II_i \to  \Omega_X/\II_i \to \Omega_X/\II \to 0 .  \label{proj}
\end{equation}
For $i = 1, \dots, q$ let $\varphi_i \in \Hom_{\Omega_X}(\II_i,\Omega_X/\II_i)$ represent a first-order deformation of $\II_i$ as in Corollary \ref{tangent obstruction}.
We claim that
\begin{equation}
d\mu(\varphi_1, \dots, \varphi_q)( \sum_{1=1}^q \omega_i) = \sum_{1=1}^q \pi_i  \varphi_i(\omega_i)   \label{derivative}
\end{equation}
for $\omega_i \in \II_i$. That is, we have a commutative diagram
\begin{equation}
\xymatrix{
 0 \ar[r] & R \ar[r]  & \bigoplus_i \II_i    \ar[d]_{\oplus_i \varphi_i}   \ar[r]^s  & \II \ar[d]^{d \mu(\varphi_1, \dots, \varphi_q)}   \ar[r]  & 0 \\
      & & \bigoplus_i \Omega_X/\II_i \ar[r]^{\pi}  & \Omega_X/\II 
}  
\label{diagram1}
\end{equation}
where $\pi = \sum_i \pi_i$. 

 \

\noindent 
To see \ref{derivative},  let $\II_{i  {\epsilon}} \subset \Omega_{\epsilon}$ be a first order deformation of $\II_i$, for $i = 1, \dots, q$.
Then $(\II_{1  {\epsilon}}, \dots, \II_{q  {\epsilon}})$ is a tangent vector (or $\C[\epsilon]$-valued point) of $\mathcal{F}_1 \times \dots \times \mathcal{F}_q$ 
at the point $(\II_1, \dots, \II_q)$. Thus, the derivative of $\mu_{\tilde \II}$ at the point $(\II_1, \dots, \II_q)$ in the direction of the vector $(\II_{1  {\epsilon}}, \dots, \II_{q  {\epsilon}})$ is
$$d \mu_{\tilde \II}(\II_1, \dots, \II_q) (\II_{1  {\epsilon}}, \dots, \II_{q  {\epsilon}}) =    \mu_{\tilde \II} (\II_{1  {\epsilon}}, \dots, \II_{q  {\epsilon}}) = 
\sum_{i=1}^q \II_{i  {\epsilon}}.$$
Denote $\II_{{\epsilon}} = \sum_i \II_{i  {\epsilon}}$, which is a first order deformation of $\II = \sum_i \II_{i}$.
Following Proposition \ref{first order deformations}, $\II_{i  {\epsilon}}$ corresponds to $\varphi_i \in \Hom_{\Omega_X}(\II_i,\Omega_X/\II_i)$
and $\II_{\epsilon}$ corresponds to a certain $\varphi \in \Hom_{\Omega_X}(\II,\Omega_X/\II)$. This means that
$\II_{i  {\epsilon}} = \II_i \times_{\varphi_i} \Omega $  and  $\II_{\epsilon} = \II\times_\varphi \Omega$. Thus, we have
$\varphi = d\mu(\varphi_1, \dots, \varphi_q)$ and $\sum_i \ \II_i \times_{\varphi_i} \Omega = \II \times_\varphi \Omega$. 
The inclusion $\II_i \times_{\varphi_i} \Omega \subset \II \times_\varphi \Omega$ easily implies that 
$\varphi(\omega_i) = \pi_i \varphi_i(\omega_i)$ for $\omega_i \in \II_i$. 
Hence $\varphi(\sum_i \omega_i) = \sum_i \pi_i \varphi_i(\omega_i)$ for $\omega_i \in \II_i$, as claimed.

\

\noindent
From \ref{diagram2} and \ref{diagram1} we obtain the commutative diagram
 
 \begin{equation}\label{diagram3}
\small{
\xymatrix{
0 \ar[r] &\Hom_{\Omega_X}(\II,\Omega_X/\II)   \ar[r]^-a & \bigoplus_i \Hom_{\Omega_X}(\II_i, \Omega_X/\II)    \ar[r]^-r   &\Hom_{\Omega_X}(R,\Omega_X/\II) \\
  & & \ar[lu]^{\ d\mu}  \bigoplus_i \Hom_{\Omega_X}(\II_i,\Omega_X/\II_i)    \ar[u]^{a'}
 }   
 }
\end{equation}
where $a'$ is defined as $a'=\oplus_i a'_i$ and the $a'_i$ are obtained from $\pi_i: \Omega_X/\II_i \to \Omega_X/\II$ by applying $\Hom_{\Omega_X}(\II_i, - )$. 

\noindent
Our hypothesis b) says that each $a'_i$ is surjective. Hence $a'$ is surjective. And it follows from \ref{diagram3} that $d \mu$ is surjective, as claimed.
\end{proof}

\medskip
 
\begin{corollary} \label{dimension of components}
In the conditions of Theorem (A)  \ref{theorem pfaff}, we may calculate the dimension of the component  $\langle\mathcal{F}_1, \dots, \mathcal{F}_q\rangle$ as follows:
\begin{eqnarray}
\dim \langle\mathcal{F}_1, \dots, \mathcal{F}_q\rangle = \dim \Hom_{\Omega_X}(\II,\Omega_X/\II)   \nonumber \\
 = \sum_j \dim  \Hom_{\Omega_X}({\II_j},{\Omega_X/\II_j})   -  \sum_j \dim \Hom_{\Omega_X}({\II_j},{\II/\II_j})   \nonumber \\
 = \sum_j \dim  \mathcal{F}_j   -  \sum_j \dim \Hom_{\Omega_X}({\II_j},{\II/\II_j}).  \nonumber
 \end{eqnarray} 
 where $\II_j \in \mathcal{F}_j$ is a general point for all $j$.
\end{corollary}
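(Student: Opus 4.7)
The plan is to derive the three equalities essentially for free from Theorem~\ref{theorem pfaff} together with a short kernel computation. The first equality simply reflects that $\langle\mathcal{F}_1, \dots, \mathcal{F}_q\rangle$ is a \emph{reduced} irreducible component of $\Hilb(\Omega_X)$, which is the conclusion of Theorem~\ref{theorem pfaff}: at a general, hence smooth and reduced, point $\II$ of the component, the dimension of the component equals the dimension of the Zariski tangent space, which by Corollary~\ref{tangent obstruction} is $\dim \Hom_{\Omega_X}(\II, \Omega_X/\II)$. The third equality is analogous: hypothesis (a) of that theorem says that each $\mathcal{F}_j$ is reduced at a general point $\II_j$, so $\dim \mathcal{F}_j = \dim T_{\II_j} \Hilb(\Omega_X) = \dim \Hom_{\Omega_X}(\II_j, \Omega_X/\II_j)$.

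The substantive step is the middle equality, which I would extract directly from the proof of Theorem~\ref{theorem pfaff}. There it is shown that the derivative
\[
d\mu : \bigoplus_{j=1}^q \Hom_{\Omega_X}(\II_j, \Omega_X/\II_j) \longrightarrow \Hom_{\Omega_X}(\II, \Omega_X/\II)
\]
is surjective at a general point, so by rank-nullity it suffices to identify $\ker d\mu$ with $\bigoplus_j \Hom_{\Omega_X}(\II_j, \II/\II_j)$. For this, I would revisit the commutative diagram \eqref{diagram3}: since $a$ is injective and $a \circ d\mu = a'$, we get $\ker d\mu = \ker a' = \bigoplus_j \ker a'_j$.

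Finally, each map $a'_j : \Hom_{\Omega_X}(\II_j, \Omega_X/\II_j) \to \Hom_{\Omega_X}(\II_j, \Omega_X/\II)$ is obtained by applying the left-exact functor $\Hom_{\Omega_X}(\II_j, -)$ to the short exact sequence \eqref{proj} of $\Omega_X$-modules $0 \to \II/\II_j \to \Omega_X/\II_j \to \Omega_X/\II \to 0$, giving
\[
0 \to \Hom_{\Omega_X}(\II_j, \II/\II_j) \to \Hom_{\Omega_X}(\II_j, \Omega_X/\II_j) \stackrel{a'_j}{\to} \Hom_{\Omega_X}(\II_j, \Omega_X/\II),
\]
so $\ker a'_j = \Hom_{\Omega_X}(\II_j, \II/\II_j)$. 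Taking $\C$-dimensions in the resulting short exact sequence
\[
0 \to \bigoplus_j \Hom_{\Omega_X}(\II_j, \II/\II_j) \to \bigoplus_j \Hom_{\Omega_X}(\II_j, \Omega_X/\II_j) \stackrel{d\mu}{\to} \Hom_{\Omega_X}(\II, \Omega_X/\II) \to 0
\]
yields the middle equality, completing the chain. There is no real obstacle here—all the genuine work was done in Theorem~\ref{theorem pfaff}; the only care needed is to notice that each ingredient (reducedness to go from component dimension to tangent dimension, surjectivity of $d\mu$, and left-exactness of $\Hom$) is already available.
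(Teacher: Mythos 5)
Your proposal is correct and follows essentially the same route as the paper: the paper's proof likewise identifies $\ker d\mu = \ker a' = \bigoplus_j \Hom_{\Omega_X}(\II_j,\II/\II_j)$ from diagram \ref{diagram3} together with the left-exact sequence \ref{longexact} obtained from \ref{proj}, and then combines surjectivity of $d\mu$ (rank--nullity) with the generic reducedness statements of Theorem \ref{theorem pfaff} to convert tangent-space dimensions into dimensions of components. Your write-up merely spells out the injectivity of $a$ and the identification of $\ker a'_j$, which the paper leaves implicit.
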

\begin{proof}
From \ref{diagram3} and  \ref{longexact} we have 
$$\ker d\mu = \ker a' = \bigoplus_j \Hom_{\Omega_X}({\II_j},{\II/\II_j}).$$
Since $d \mu$ is surjective the stated equalities of dimensions follow from Theorem (A)  \ref{theorem pfaff}.
\end{proof}

\medskip
\noindent
Let's point out that $\Hom_{\Omega_X}({\II_j},{\II/\II_j})$ parametrizes first-order deformations of $\II_j$ that stay contained in $\II$. 

\medskip

\subsection{Some remarks on the obstructions to stability of sums.}  

\

\begin{remark} \label{remarkpfaff} Applying $\Hom_{\Omega_X}({\II_j},{-})$ to \ref{proj}  we obtain the exact sequence
\begin{equation}
\xymatrix{
0 \ar[r]  &\Hom_{\Omega_X}({\II_j},{\II/\II_j}) \ar[r]  &\Hom_{\Omega_X}({\II_j},{\Omega_X/\II_j}) \ar[r]^{a'_j}  &\Hom_{\Omega_X}({\II_j},{\Omega_X/\II})  \\
\ar[r]  &\Ext^1_{\Omega_X}({\II_j},{\II/\II_j})  \ar[r]  &\Ext^1_{\Omega_X}({\II_j},{\Omega_X/\II_j})  \ar[r]  &\Ext^1_{\Omega_X}({\II_j},{\Omega_X/\II}) 
}
\label{longexact}
\end{equation}
Then hypothesis b) in Theorem (A)  \ref{theorem pfaff}  is equivalent to the injectivity of 
\begin{equation}
\Ext^1_{\Omega_X}({\II_j},{\II/\II_j}) \to \Ext^1_{\Omega_X}({\II_j},{\Omega_X/\II_j}), \ \forall j.  \label{condition0}
\end{equation}
Taking into account Proposition \ref{local-global} it would be enough to prove
\begin{equation}
H^0(X,\mathcal{E}xt_{\Omega_X}^{1}({\II_j},{\II/\II_j})) = 0   \ \  \text{and} \ \   H^1(X,\mathcal{H}om_{\Omega_X}({\II_j},{\II/\II_j})) = 0. \label{condition1}
\end{equation}
\end{remark}

\

\noindent 
\begin{remark} \label{strategy}
In order to study  \ref{condition0} it will be necessary for us to work also in the analytic setting, see Definition \ref{analytic sheaf}. We show  in Proposition \ref{comparison2} below  that the surjectivity in b) 
of Theorem (A)  \ref{theorem pfaff}  is equivalent to the similar surjectivity in the analytic topology. As in Remark \ref{remarkpfaff}, 
this surjectivity is equivalent to the injectivity of
\begin{equation}
\Ext^1_{\Omega^h_X}({\II_j}^h, {(\II/\II_j})^h) \to \Ext^1_{\Omega^h_X}({\II_j}^h,({\Omega_X/{\II_j}})^h), \ \forall j . \label{condition4}
\end{equation}

\noindent
We will deduce the injectivity in \ref{condition4} by proving 
\begin{equation}
H^0(X, \mathcal{E}xt^1_{\Omega^h_X}({\II_j}^h, ({\II/\II_j})^h)) = 0,
\label{1obstruction}
\end{equation} 
in Proposition \ref{cumbersome} 
(in fact we will prove $\mathcal{E}xt^1_{\Omega^h_X}({\II_j}^h, ({\II/\II_j})^h) = 0$),
and 
\begin{equation}
H^1(X,\mathcal{H}om_{\Omega^h_X}({\II_j}^h, ({\II/\II_j})^h)=0,
\label{2obstruction}
\end{equation}
in Proposition \ref{vanishing}.

\end{remark}

\

\medskip
\noindent
We need the following Proposition from \cite{deligne1969equation}, Chapter II, (6.6.1), which is an elementary case of the Grothendieck-Deligne Zariski/analytic comparison theorems. We recall the statement for convenience:

\begin{proposition} \label{comparison}
Let $X$ be a proper algebraic variety over the complex numbers, and let $C$ be a differential graded coherent $\OO_X$-Module. 
Then the natural map of finite-dimensional complex vector spaces of hypercohomology
$$\mathbb{H}^*(X, C) \to \mathbb{H}^*(X^h, C^h),$$
is an isomorphism.
\end{proposition}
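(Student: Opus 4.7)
\medskip

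\noindent\textbf{Proof proposal for Proposition \ref{comparison}.} The plan is to reduce the statement about hypercohomology of a complex of coherent sheaves to the classical Serre GAGA theorem for a single coherent sheaf, via the standard hypercohomology spectral sequence and the naturality of analytification. Since $C$ is a differential graded coherent $\OO_X$-Module on $X$, by the conventions of Section \ref{preliminaries} it has only finitely many non-zero components $C^p$, each of which is a coherent $\OO_X$-Module. The key input is that Serre's GAGA \cite{serre1956geometrie} gives, for each coherent $\OO_X$-Module $F$ on the proper variety $X$, canonical isomorphisms $H^q(X,F)\cong H^q(X^h,F^h)$ of finite-dimensional $\C$-vector spaces.

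First I would consider the ``first'' hypercohomology spectral sequence attached to the bounded complex of coherent sheaves $C$, namely
\[
E_1^{p,q} = H^q(X,C^p) \ \Longrightarrow\ \mathbb{H}^{p+q}(X,C),
\]
together with its analytic analogue
\[
(E_1^h)^{p,q} = H^q(X^h,C^{p,h}) \ \Longrightarrow\ \mathbb{H}^{p+q}(X^h,C^h).
\]
Both spectral sequences converge because $C$ has only finitely many non-zero components and each term $H^q(X,C^p)$ (resp.\ $H^q(X^h,C^{p,h})$) is a finite-dimensional $\C$-vector space (the algebraic case by Serre's coherence theorem on proper schemes, the analytic case by the Cartan--Serre theorem). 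The analytification functor, being exact on coherent sheaves and natural, induces a morphism of spectral sequences from the algebraic one to the analytic one, compatibly with the natural comparison map on the abutments.

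Next I would observe that at the $E_1$-page this morphism is, termwise in $(p,q)$, precisely the comparison map of Serre's GAGA applied to the coherent sheaf $C^p$, which is an isomorphism. By the standard spectral-sequence comparison theorem (an induction on the $E_r$-page, using the Five Lemma applied to the short exact sequences defining $E_{r+1}$ from $E_r$ and the differential $d_r$), isomorphism on $E_1$ propagates to isomorphism on every $E_r$, hence on $E_\infty$. Since both spectral sequences are convergent and the comparison is compatible with the induced filtrations on the abutments, a further Five Lemma argument on the associated graded pieces gives the desired isomorphism $\mathbb{H}^*(X,C)\to \mathbb{H}^*(X^h,C^h)$.

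The main potential obstacle is not any deep geometric point but rather a bookkeeping one: verifying that the comparison map between algebraic and analytic hypercohomology really is induced by a morphism of spectral sequences compatible with the comparison on each $E_1$-term. This is essentially built into the construction of the spectral sequence from a Cartan--Eilenberg injective resolution of $C$, applying $\Gamma(X,-)$ (resp.\ $\Gamma(X^h,-)$) and filtering by the column degree; one then uses that analytification sends an injective resolution of each $C^p$ in the category of $\OO_X$-Modules to an acyclic resolution of $C^{p,h}$ for $\Gamma(X^h,-)$ (after comparing computations of sheaf cohomology via different resolutions). Alternatively, one can bypass the spectral sequence entirely and argue by d\'evissage on the length $\ell(C)$ of the complex: for $\ell(C)=1$ the statement is Serre's GAGA for a single coherent sheaf, and for $\ell(C)>1$ one uses the stupid truncation distinguished triangle $\sigma^{\ge 1}C \to C \to C^0[0] \to \sigma^{\ge 1}C[1]$, the long exact sequence of hypercohomology attached to it both algebraically and analytically, the Five Lemma, and induction.
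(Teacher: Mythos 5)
Your proposal is correct and follows essentially the same route as the paper: both compare the algebraic and analytic hypercohomology spectral sequences with $E_1^{p,q}=H^q(X,C^p)$, invoke Serre's GAGA to get isomorphisms on the first page, and conclude the isomorphism on the abutments. Your write-up merely supplies details the paper leaves implicit (convergence, the $E_r$-induction, compatibility of filtrations) plus an optional d\'evissage variant, but the core argument is identical.
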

\begin{proof}
The functor $h$ induces a homomorphism of spectral sequences of hypercohomology
$$H^q(X, C^p) \Rightarrow \mathbb{H}^*(X, C),$$  
$$H^q(X^h, (C^p)^h) \Rightarrow \mathbb{H}^*(X^h, C^h).$$
By GAGA, since $X$ is proper, the morphism in the first page 
$$H^q(X, C^p) \to H^q(X^h, (C^p)^h),$$
is an isomorphism.
Hence the limit $\mathbb{H}^*(X, C) \to \mathbb{H}^*(X^h, C^h)$ is also an isomorphism.
\end{proof}

\

\noindent
On a similar vein, we have the following:

\begin{proposition} \label{comparison2}
Let $X$ be a proper algebraic variety over the complex numbers and let $F, G, H$ be differential graded coherent $\OO_X$-Modules. Then,

\noindent
a) The natural map 
$$H^0(X, \mathcal{H}om_{\Omega_X}({F},{G})) \to H^0(X, \mathcal{H}om_{\Omega^h_X}({F^h},{G^h}))$$ 
is an isomorphism.

\noindent
b) For any morphism $G \to H$ we have an induced commutative square where the vertical maps are isomorphisms,
\begin{equation}
\xymatrix{
H^0(X, \mathcal{H}om_{\Omega_X}({F},{G}))  \ar[r]^a \ar[d] & H^0(X, \mathcal{H}om_{\Omega_X}({F},{H}))  \ar[d]  \\
H^0(X^h, \mathcal{H}om_{\Omega^h_X}({F^h},{G^h}))  \ar[r]^b  & H^0(X^h, \mathcal{H}om_{\Omega^h_X}({F^h},{H^h}))
}
\label{square}
\end{equation}
Therefore, $a$ is surjective if and only if $b$ is surjective. 
In particular, with notation as in Theorem (A)  \ref{theorem pfaff},
the natural map $\Hom_{\Omega_X}({\II_j},{\Omega_X/\II_j}) \to \Hom_{\Omega_X}({\II_j},{\Omega_X/\II})$ is surjective if and only if
the similar map in the analytic topology 
$$\Hom_{\Omega^h_X}({\II_j}^h,({\Omega_X/\II_j})^h) \to \Hom_{\Omega^h_X}({\II_j}^h,({\Omega_X/\II})^h)$$ 
is surjective.

\end{proposition}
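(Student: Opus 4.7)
My approach is to deduce both statements from classical GAGA, applied to the graded $\OO_X$-module components of $\mathcal{H}omgr_{\Omega_X^{\sharp}}(F,G)$. The crucial point is that although $\mathcal{H}om_{\Omega_X}(F,G)$ is only a sheaf of complex vector spaces, it is realized in the exact sequence \ref{cicles} as the kernel
$$\mathcal{H}om_{\Omega_X}(F,G) \;=\; \ker\!\bigl(d\colon\mathcal{H}om_{\Omega_X^{\sharp}}(F,G)\to \mathcal{H}om_{\Omega_X^{\sharp}}(F,G[1])\bigr),$$
where both target sheaves \emph{are} coherent $\OO_X$-Modules since $F$ and $G$ have only finitely many non-zero components and each is coherent. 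This reduces the question to a problem about coherent sheaves on a proper variety.

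For part (a), I would take global sections of the sequence above on $X$ and of its analogue on $X^h$. Left-exactness of $H^0(X,-)$ identifies $H^0(X,\mathcal{H}om_{\Omega_X}(F,G))$ with the kernel of $d\colon H^0(X,\mathcal{H}om_{\Omega_X^{\sharp}}(F,G))\to H^0(X,\mathcal{H}om_{\Omega_X^{\sharp}}(F,G[1]))$, and similarly for the analytic side. Two inputs are then required: (i) classical GAGA, yielding $H^0(X,\mathcal{H}om_{\Omega_X^{\sharp}}(F,G[k])) \cong H^0(X^h,\mathcal{H}om_{\Omega_X^{\sharp}}(F,G[k])^h)$ for $k=0,1$; and (ii) the identification $\mathcal{H}om_{\Omega_X^{\sharp}}(F,G[k])^h \cong \mathcal{H}om_{\Omega^h_X}(F^h,G^h[k])$ of the analytification of the internal Hom (working in the graded, non-differential category). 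Both isomorphisms are compatible with the differential $d$ by naturality, so passing to kernels yields the desired isomorphism.

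Part (b) follows formally: by functoriality in the second argument, the morphism $G\to H$ induces the commutative square \ref{square}, whose vertical arrows are isomorphisms by part (a). Surjectivity of $a$ is then equivalent to surjectivity of $b$, and specialization with $F=\II_j$, $G=\Omega_X/\II_j$, $H=\Omega_X/\II$ gives the concrete comparison statement used to reformulate hypothesis (b) of Theorem \ref{theorem pfaff} analytically.

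The main obstacle I anticipate is item (ii): verifying that analytification genuinely commutes with $\mathcal{H}om_{\Omega_X^{\sharp}}$, not merely with $\mathcal{H}om_{\OO_X}$. The plan is to present $\mathcal{H}om_{\Omega_X^{\sharp}}(F,G[k])$ locally as the kernel of an $\OO_X$-linear map between the coherent sheaves $\bigoplus_i \mathcal{H}om_{\OO_X}(F^i,G^{i+k})$, the map encoding the $\Omega_X^{\sharp}$-linearity relations $\varphi(\omega\wedge f) - (-1)^{k\deg\omega}\omega\wedge \varphi(f)$. Since $\Omega_X$ is locally finitely generated over $\OO_X$, these relations can be packaged as a single morphism of coherent sheaves; exactness of analytification together with its compatibility with $\mathcal{H}om_{\OO_X}$ of coherent sheaves then transfers the kernel description to $X^h$.
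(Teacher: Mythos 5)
Your proposal is correct and takes essentially the same approach as the paper: both realize $\mathcal{H}om_{\Omega_X}(F,G)$ as $\ker(d)$ inside the coherent graded sheaves $\mathcal{H}om_{\Omega_X^{\sharp}}(F,G[k])$ via the sequence \ref{cicles}, apply GAGA on the proper variety $X$ to their global sections, and pass to kernels, with part (b) following formally by naturality. Your step (ii) — checking that analytification commutes with $\mathcal{H}om_{\Omega_X^{\sharp}}$ by presenting the graded Hom locally as the kernel of an $\OO_X$-linear map between coherent sheaves encoding the $\Omega_X^{\sharp}$-linearity relations — fills in a point the paper's proof leaves implicit, and it does go through since $\Omega^1_X$ is locally finitely generated over $\OO_X$ and $(-)^h$ is exact and compatible with $\mathcal{H}om_{\OO_X}$ of coherent sheaves.
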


\begin{proof} 
a) If $C$ is a differential graded coherent $\OO_X$-Module, then by GAGA the natural map  $H^0(X, C) \to H^0(X^h, C^h)$ is an isomorphism.
Taking $C = \mathcal{H}omgr_{{\Omega_X}^{\sharp}}({F},{G})$, see \ref{cicles},  we obtain the following commutative square where the vertical maps are isomorphisms:
\begin{equation}
\xymatrix{
H^0(X, \mathcal{H}om_{{\Omega_X}^{\sharp}}({F},{G}))  \ar[r]^{d} \ar[d] & H^0(X, \mathcal{H}om_{{\Omega_X}^{\sharp}}({F},{G[1]})) \ar[d] \\
H^0(X^h, \mathcal{H}om_{{\Omega_X^h}^{\sharp}}({F^h},{G^h}))  \ar[r]^{d^h}  & H^0(X^h, \mathcal{H}om_{{\Omega^h_X}^{\sharp}}({F^h},{G^h[1]}))
}
\end{equation}
It follows that $\ker(d) = H^0(X, \mathcal{H}om_{\Omega_X}({F},{G})) \to \ker(d^h) = H^0(X^h, \mathcal{H}om_{\Omega^h_X}({F^h},{G^h}))$ is an isomorphism, as claimed.

\noindent
b) By a) the vertical maps in \ref{square} are isomorphisms. The statement on hypothesis b) of Theorem  \ref{theorem pfaff} follows immediately.
\end{proof}

\

\begin{corollary}
Let $X$ be a proper algebraic variety over the complex numbers and let $\II \subset \Omega_X$ be a differential graded ideal. Then the natural map
$$H^0(X, \mathcal{H}om_{\Omega_X}({\II},{\Omega_X/\II})) \to H^0(X, \mathcal{H}om_{\Omega^h_X}({\II^h},{(\Omega_X/\II)^h}))$$ 
is an isomorphism. That is, the first order deformations of $\II$ and of $\II^h$ are the same.
\end{corollary}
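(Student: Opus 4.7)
The plan is to apply Proposition \ref{comparison2} a) directly, taking $F = \II$ and $G = \Omega_X/\II$. Both are differential graded coherent $\OO_X$-Modules: $\II$ because it is a coherent subsheaf of $\Omega_X$ stable under $d$, and $\Omega_X/\II$ because it is the quotient of two such modules. Since $X$ is proper by hypothesis, the cited Proposition applies verbatim and delivers the claimed isomorphism
\[
H^0(X, \mathcal{H}om_{\Omega_X}({\II},{\Omega_X/\II})) \xrightarrow{\ \sim\ } H^0(X^h, \mathcal{H}om_{\Omega^h_X}({\II^h},{(\Omega_X/\II)^h})).
\]

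For the second sentence of the corollary I would recall that by \eqref{Z^0} and equation \eqref{cicles}, the left-hand side equals $\Hom_{\Omega_X}(\II,\Omega_X/\II)$, which is the space of first order deformations of $\II$ by Proposition \ref{first order deformations}. By an identical argument in the analytic category (infinitesimal deformations of $\II^h$ over $\C[\varepsilon]$ are classified by analytic $\Omega^h_X$-linear maps $\II^h \to (\Omega_X/\II)^h$ commuting with $d^h$), the right-hand side is the space of first order deformations of $\II^h$. Hence the algebraic and analytic first-order deformation spaces coincide.

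The only subtlety worth flagging is the compatibility of the functor $(-)^h$ with quotients and with the differential graded structure: one needs $(\Omega_X/\II)^h = \Omega^h_X/\II^h$ as differential graded $\Omega^h_X$-Modules. But this follows from the exactness of $(-)^h$ established in Definition \ref{analytic sheaf} together with the canonical extension of differential operators from the algebraic to the analytic side recalled in the same definition. So there is really nothing to prove beyond invoking Proposition \ref{comparison2} a) and interpreting both sides as first order deformation spaces; the corollary is a direct specialization of the preceding proposition.
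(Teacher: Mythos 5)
Your proposal is correct and coincides with the paper's own proof, which likewise consists of applying Proposition \ref{comparison2} a) with $F=\II$ and $G=\Omega_X/\II$. The extra remarks you add --- identifying the left-hand side with first order deformations via \eqref{Z^0} and Proposition \ref{first order deformations}, and noting that exactness of $(-)^h$ gives $(\Omega_X/\II)^h=\Omega^h_X/\II^h$ as differential graded $\Omega^h_X$-Modules --- are sound and merely make explicit what the paper leaves implicit.
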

\begin{proof}
If follows from Proposition \ref{comparison2} a)  taking $F=\II$ and $G=\Omega_X/\II$.
\end{proof}

\

\medskip

\subsection{Vanishing of the first obstruction.}  

\

\noindent
We keep the notations above. In particular,  for a sequence of  ideals 
$\II_1,\ldots,\II_q$ on the smooth projective variety $X$, let $\II=\II_1+\ldots+\II_q$ and $S=S(\II)$ its singular set. 

\

\noindent
For simplicity of notation it will be convenient sometimes to replace $\II_j \subset \II$ by singular integrable Pfaff ideals $\JJ$ and $\KK$ such that $\JJ \subset  \KK$. 

\

\noindent
First we prove a Proposition regarding vanishing of relative cohomology. See \cite{berthier1993quelques} for related results.

\

\begin{proposition} \label{vanishing cohomology} 
Let $X$ be a smooth variety,  
and let $\KK \subset \Omega_X$ be a saturated singular integrable Pfaff ideal, with singular set $S = S(\KK)$.
Suppose $\depth_S(\Omega_X/\KK) \ge 2$. Then  
$$\HH^{j}((\Omega_X/\KK)^{h}, d)=0, \ \ \ \forall j > 0.$$  
\end{proposition}

\begin{proof}
We claim that, denoting $U = X-S$,
\begin{equation}
\HH^{j}((\Omega_X/\KK)^{h}|_U, d)=0, \ \ \  \forall j > 0.
\label{vanishing en U}
\end{equation}
Proving this is a local problem for a non-singular Pfaff ideal. 
For each $x \in U$ there exists an analytic open $V$ such that $x \in V \subset U$ and such that $(\Omega_X/\KK)^{h}|_V$ is isomorphic to the relative De Rham complex of a holomorphic submersion $V \to B$ where $B \subset \C^q$ is an open ball. Since such a relative De Rham complex is exact in positive degrees, as shown in  \cite{deligne1969equation} or \cite{MR0508170}, this proves our claim.  

\

\noindent
Now using the hypothesis $\depth_S(\Omega_X/\KK) \ge 2$, by  Proposition \ref{torsion} (f) (as in the proof of \ref{iso}) we have an isomorphism of complexes
$\Omega_X/\KK \to \iota_{*}((\Omega_X/\KK)|_{U})$. Therefore $\HH^{j}((\Omega_X/\KK)^{h}, d) = \iota_{*}(\HH^{j}((\Omega_X/\KK)^h|_{U}), d) = 0$, as we wanted to prove.
\end{proof}

\

\noindent
Next we prove a Lemma  from which we will deduce  $\depth_S(\KK/\JJ) \ge 2$ in the following Corollary. 
See  \cite{MR597077}, Proposition 1.1, for a similar statement, with a different proof.

\

\begin{lemma} \label{depth lemma} 
Let 
\begin{equation}
 0 \to M' \to  M \to M'' \to 0,  \label{projbis}
\end{equation}
be an exact sequence of coherent sheaves on the smooth variety $X$.
If  $M$ is reflexive and $M''$ is torsion-free then $M'$ is reflexive.
\end{lemma}
\begin{proof} Denote $S$ the singular set of $M''$ and let
$p' = \depth_S(M')$, $p = \depth_S(M)$, $p'' = \depth_S(M'')$. By \cite{bourbaki2007elements}, \S 1, Proposition 1, one and only one of the following occurs: 
$$p'= p \le p'', \ \  \  p''=p < p', \ \  \  p'' = p' - 1 < p.$$ 
We have $p \ge 2$ because $M$ is reflexive, and $p'' \ge 1$ because $M''$ is torsion-free. It easily follows that $p' \ge 2$, as claimed.
\end{proof}

\

\begin{corollary} \label{depth} 
Let $\JJ \subset  \KK \subset \Omega_X$ be exterior differential ideals. 
For each $r \in \N$, if $\Omega^r_X/\KK^r$ is torsion-free and
$\Omega^r_X/\JJ^r$ is reflexive then $\KK^r/\JJ^r$ is reflexive.
\end{corollary}
\begin{proof}
Apply Lemma \ref{depth lemma} to the exact sequence 
\begin{equation}
 0 \to \KK^r/\JJ^r \to  \Omega^r_X/\JJ^r \to \Omega^r_X/\KK^r \to 0.  \label{projbis}
\end{equation}
\end{proof}

\

\

\noindent
In the next Proposition \ref{cumbersome}  we obtain the vanishing of the first obstruction, under certain hypothesis, as announced in Remark \ref{strategy}.

\

\begin{proposition} \label{cumbersome}
Let  $X$ be a smooth variety and let $\JJ \subset  \KK \subset \Omega_X$ be saturated singular integrable Pfaff ideals with singular set contained in 
the closed set $S \subset X$.   

\noindent
If $\KK/\JJ$ is reflexive then

(1) $\HH^j((\KK/\JJ)^h), d^h) = 0$ for $j \ge 2$, and

(2) $\HH^{j}(\mathcal{H}omgr_{\Omega_X^{\sharp}}(\JJ, \KK/\JJ)^h, d^h)=0$, for $j \ge 1$.

\noindent
Assuming also $\mathcal{E}xt^1_{O_X}(\JJ^r, \KK^r/\JJ^r) = 0$ for all $r \ge 1$ it follows that

(3) $\mathcal{E}xt^1_{\Omega^h_X}({\JJ}^h, (\KK/\JJ)^h)=0. $
\end{proposition}
\begin{proof}
Denote $\iota: U = X-S \to X$ the inclusion. We know from \ref{vanishing en U} that
$$\HH^{j}((\Omega_X/\JJ)^{h}|_U, d^h)=0  \ \ \text{and} \ \ \HH^{j}((\Omega_X/\KK)^{h}|_U, d^h)=0, \  \forall j \ge 1.$$
Taking cohomology in the exact sequence 
\[
0\to  \iota_*((\KK/\JJ)^h|_U)\to \iota_*((\Omega_X/\JJ)^h|_U)\to \iota_*((\Omega_X/\KK)^h|_U)\to 0,
\]
we obtain $\HH^j(\iota_*((\KK/\JJ)|_U), d^h)=0$ for $j \ge 2$. Since $\KK/\JJ$ is reflexive, 
$\KK/\JJ = \iota_*((\KK/\JJ)|_U)$. Then $(\KK/\JJ)^h = \iota_*((\KK/\JJ)^h|_U)$, and taking homology we obtain (1).

\

\noindent
(2) follows from combining (1) with Proposition \ref{h1}, using again that $\KK/\JJ$ is reflexive. 

\

\noindent
Let us now prove (3). 
From Proposition \ref{h1 y ext1 sheaf} we have
$$\mathcal{E}xt_{\Omega_X}^1({\JJ},{\KK/\JJ}) \cong \HH^1(\mathcal{H}omgr_{\Omega_X^{\sharp}}({\JJ},{\KK/\JJ}),d).$$
Applying $( )^h$ and using (2), we obtain (3).

\end{proof}

\

\begin{corollary}\label{cumbersome2}
Let  $X$ be a smooth variety and let $\JJ \subset  \KK \subset \Omega_X$ be saturated singular integrable Pfaff ideals with singular set contained in 
the closed set $S \subset X$. Suppose:  

i) $\Omega^r_X/\JJ^r$  and $\Omega^r_X/\KK^r$ are reflexive for  $r \ge 2$, 

ii) $\KK^1/\JJ^1$ is reflexive and $\JJ^1$ is locally free, and

iii) $\depth_S \mathcal{H}om_{\OO_X}(\JJ^r, \KK^r/\JJ^r) \ge 3$ for all $r \ge 2$.
 
\noindent
Then  $\mathcal{E}xt^1_{\Omega^h_X}({\JJ}^h, (\KK/\JJ)^h)=0.$
\end{corollary}
\begin{proof}
We only need to check the hypothesis of Proposition  \ref{cumbersome}.

\

\noindent
From Corollary \ref{depth} and i), $\KK^r/\JJ^r$ is reflexive for $r \ge 2$. From ii), also $\KK^1/\JJ^1$ is reflexive. Hence $\KK/\JJ$ is reflexive.

\

\noindent
And from iii) and   Corollary \ref{cor ext=0}, $\mathcal{E}xt^1_{O_X}(\JJ^r, \KK^r/\JJ^r) = 0$ for all $r \ge 2$.
The remaining case $\mathcal{E}xt^1_{O_X}(\JJ^1, \KK^1/\JJ^1) = 0$ follows from $\JJ^1$ being locally free by ii).

\end{proof}

\begin{remark} \label{hypothesis}
The same conclusion $\mathcal{E}xt^1_{\Omega^h_X}({\JJ}^h, (\KK/\JJ)^h)=0$ is obtained, with a similar proof, with the following -perhaps simpler- assumptions:

i') $\Omega^r_X/\JJ^r$  and $\Omega^r_X/\KK^r$ are reflexive for  $r \ge 1$, 

iii') $\depth_S \mathcal{H}om_{\OO_X}(\JJ^r, \KK^r/\JJ^r) \ge 3$ for all $r \ge 1$.
 
\noindent
These hypothesis seem practical for applications, but there are examples of interest where they are not satisfied. For instance, if $X$ is a projective space and
$\JJ$ is the saturated ideal generated by a single homogeneous integrable 1-form then  $\Omega^1_X/\JJ^1$ 
has singularities in codimension two \cite{jouanolou2006equations}
and hence it can not be reflexive. 
\end{remark}

\

\medskip

\subsection{Vanishing of the second obstruction.}   

\
 
\noindent
Our goal in this section is to prove that $H^1(X,\mathcal{H}om_{\Omega_X}(\II_j, \II/\II_j))=0$ under certain conditions. 
We shall do this by reducing to the calculation of a twisted De Rham cohomology group.

\medskip
\noindent
Let $C$ be a differential graded $\Omega_X$-Module. Then (see e. g.  \cite{MR0102537}, \cite{MR0217085}, \cite{griffiths2014principles})
we have two spectral sequences, $I(C)$ and ${II}(C)$, both converging
to the hypercohomology vector spaces $\mathbb{H}^n(C)$, and whose initial terms are:
\[
I_2^{p,q}(C) = H^p(X, \HH^q(C,d)), \quad \quad
II_2^{p,q}(C) = H^q( H^p(X, C) , d).
\]

\noindent
Also, we have the two similar spectral sequences $I(C^h)$ and $II(C^h)$ obtained from the differential graded $\Omega^h_X$-Module $C^h$ on the analytic variety $X^h$, as in Definition \ref{analytic sheaf}. Both converge to $\mathbb{H}^n(C^h)$ and their initial terms are:
\[
I_2^{p,q}(C^h) = H^p(X, \HH^q(C^h,d)), \quad \quad
II_2^{p,q}(C^h) = H^q( H^p(X, C^h) , d).
\]

\

\noindent
Recall also that, if $C^j = 0$ for $j<0$, we have the exact sequence of low degree terms (see e. g. \cite{griffiths2014principles}, p. 458), written for instance for $I(C)$:
\begin{equation}
0 \to I_2^{1,0}(C) \to \mathbb{H}^1(C) \to I_2^{0,1}(C) \to I_2^{2,0}(C) \to \mathbb{H}^2(C).
\label{low degree terms}
\end{equation}

\medskip
\noindent
Let $\JJ \subset \KK$ be singular integrable Pfaff ideals with singular set $S=S(\KK)$. As in \ref{homgr2}, we take 
\begin{equation}
C = \mathcal{H}omgr_{\Omega_X^{\sharp}}(\JJ, G),
\label{defC}
\end{equation}
with $G=\KK/\JJ$. Assume $\depth_S(G) \ge 2$.  

\medskip
\noindent
Let us consider the first spectral sequence $I(C^h)$. 
By Proposition \ref{cumbersome} (2) we know $\HH^q(C^h, d^h) = 0$ for $q \ge 1.$  
Therefore, $I_2^{p,q}(C^h) = 0$  for $q \ge 1$ (the spectral sequence degenerates at the second page) and
\begin{equation}
\mathbb{H}^p(C) = \mathbb{H}^p(C^h) = H^p(X, \HH^0(C^h,d)) = H^p(X, \mathcal{H}om_{\Omega_X}(\JJ, G)^h), \ \forall p \ge 0.
\label{hyper1}
\end{equation}
In particular, 
\begin{equation}
H^1(X, \mathcal{H}om_{\Omega_X}(\JJ, \KK/\JJ)^h) = \mathbb{H}^1(C).
\label{hyper2}
\end{equation}

\

\noindent
Now let us compute $\mathbb{H}^1(C)$ using the second spectral sequence.

\medskip
\noindent
Assume that $K \subsetneq \Omega_X$ is a proper exterior ideal with $K^0 = 0$. Therefore $G^j=0$ for $j \le 0$, and hence, $C^j = 0$ for $j<0$. By \ref{low degree terms}, $\mathbb{H}^1(C) = 0$  follows if we show

\begin{equation}
II_2^{1,0}(C) = H^0( H^1(X, C) , d) = 0,
\label{a}
\end{equation}
and 
\begin{equation}
II_2^{0,1}(C) = H^1( H^0(X, C) , d) =  0.
\label{b}
\end{equation}

\medskip
\noindent
Regarding \ref{a}, we will make the hypothesis that
\begin{equation}
H^1(d^0): H^1(X, C^0) \to H^1(X, C^1),
\label{hyper3}
\end{equation}
is injective. This holds, for instance, if
\begin{equation}
H^1(X, C^0) = H^1(X, \mathcal{H}om_{\Omega_X^{\sharp}}(\JJ, \KK/\JJ)) = H^1(X, \mathcal{H}om_{\OO_X}({\JJ}^1, {\KK}^1/{\JJ}^1)) = 0.
\label{hyper4}
\end{equation}
It is clear that  \ref{a} is equivalent to  the injectivity in \ref{hyper3}, since $H^0( H^1(X, C) , d) = \ker H^1(d^0)$, by definition. On the other hand, 
the condition on coherent sheaves 

\noindent
$H^1(X, \mathcal{H}om_{\OO_X}({\JJ}^1, {\KK}^1/{\JJ}^1)) = 0$ of \ref{hyper4}  holds in many examples.

\

\medskip
\noindent
In the next Proposition we show that \ref{b} is true in case $X$ is a complex projective space.

\medskip
\noindent
For convenience, let us recall  some notations and well-known facts to be used below.

\

\noindent
Let $\pi: U = \C^{n+1}-\{0\} \to \PP^n(\C) = \PP^n$ be the canonical projection and denote $A = \C[x_0, \dots, x_n]$. 
If $M$ is an $A$-module here we denote $\bar M$ the associated quasi-coherent sheaf on $\C^{n+1}$.
Considering $A$ as a graded $\C$-algebra, if $M = \bigoplus_{r \in \Z} M_r$ is a graded $A$-module  denote as usual $\tilde{M}$
the associated quasi-coherent sheaf on $\PP^n$.   
If $F$ is a coherent sheaf on $\PP^n$ then $M = \Gamma_*(F) = \bigoplus_{r \in \Z} H^0(\PP^n, F(r))$ is a graded $A$-module, and $F \cong \tilde M$.
It is not hard to prove from \cite{MR0463157} (II, Proposition (5.12) (c)) that for a graded $A$-module $M$, $\pi^*(\tilde M) =  \bar M|_U$. 
In particular, $\pi^*(\tilde M)$ extends to $\C^{n+1}$; see also \cite{MR0463157} (Ex. II (5.15)) for more information on extending coherent sheaves. 

\

\noindent
If $M$ and $N$ are graded $A$-modules, and $M$ is finitely generated, then $\Hom_A(M, N)$ is a graded $A$-module
with homogeneous component of degree $r \in \Z$ defined as:
\begin{equation}
\Hom_A(M, N)_r = \{ \varphi \in \Hom_A(M, N)/ \varphi(M_s) \subset N_{s+r}, \forall s \in \Z\},
\label{graded}
\end{equation}
see \cite{bourbakialgebra}, \S11(6). 

\

\noindent
For $F$, $G$ coherent sheaves on $\PP^n$,  the graded module 
$H = \Gamma_* \mathcal{H}om_{\OO_{\PP^n}}(F, G)$ has homogeneous component of degree $r$ equal to
\begin{equation}
H_r=H^0(\PP^n, \mathcal{H}om_{\OO_{\PP^n}}(F, G)(r)) = \Hom_{\OO_{\PP^n}}(F, G(r)). 
\label{graded2}
\end{equation}
Since
$\pi^* \mathcal{H}om_{\OO_{\PP^n}}(F, G) = \mathcal{H}om_{\OO_{\C^{n+1}}}(\pi^* F, \pi^* G)|_U$ (see \cite{serre1956geometrie}, Proposition 21)
we obtain  
\begin{equation}
\mathcal{H}om_{\OO_{\C^{n+1}}}(\pi^* F, \pi^* G)= \bar H.
\label{graded3}
\end{equation}

\

\

\noindent
If $V \subset \C^{n+1}$ is open and $\omega$ is a differential form on $V - \{0\}$
then $\omega$ extends uniquely to  $V$. Then we have
$\iota_* (\Omega_{\C^{n+1} -\{0\}}) = \Omega_{\C^{n+1}}$, where $\iota: \C^{n+1} -\{0\} \to \C^{n+1}$ is the inclusion.

\

\noindent
It is easy to check that the pull-back of a 1-form on some open set $U \subset \PP^n$
is a homogeneous rational 1-form of degree zero on $\pi^{-1}(U).$
This means that the image of the natural pull-back of 1-forms 
$\pi^*: \Omega^1_{\PP^n}   \to  \pi_* \Omega^1_{\C^{n+1}}$ 
is contained in the subsheaf $\tilde {E^1} \subset \pi_* \Omega^1_{\C^{n+1}}$, 
where $E^1 = H^0(\C^{n+1}, {\Omega^1_{\C^{n+1}}})$ is the graded $A$-module of polynomial 1-forms on $\C^{n+1}$.
One thus obtains the Euler exact sequence:
\begin{equation}
\xymatrix{
0 \ar[r] & \Omega^1_{\PP^n}  \ar[r]^{\pi^*} &  \tilde{E^1} \ar[r]^{\iota_R} &  \OO_{\PP^n} \ar[r] & 0,
 }
\label{euler} 
\end{equation}
where $\iota_R$ is contraction with the radial vector field $R = \sum_{0 \le i \le n} x_i \frac{\partial}{\partial x_i}$.

\noindent
Taking exterior powers in \ref{euler} (\cite{MR0463157}, II, Ex. 5.16, see also  \cite{okonek1980vector} p. 3, \cite{hirzebruch1966topological} p. 55)  
we get the exact sequences for $q \le n$ (Euler sequence for $q$-forms):
\begin{equation}
\xymatrix{
0 \ar[r] & \Omega^q_{\PP^n}  \ar[r]^{\pi^*} & \bigwedge^q \tilde{E^1}  \ar[r]^{\iota_R} &  \Omega^{q-1}_{\PP^n} \ar[r] & 0.
 }
\label{euler2} 
\end{equation}
 \noindent
 Since $\Omega^1_{\C^{n+1}}$ is free, with basis elements $dx_0, \dots, dx_n$,
 $\bigwedge^q \Omega^1_{\C^{n+1}} := \Omega^q_{\C^{n+1}}$ is also free, with basis elements $dx_{i_1} \wedge \dots \wedge dx_{i_q}$ for 
 all choices of $0 \le i_1 < \dots < i_q \le n$ .
It follows that the natural map 
 $\bigwedge^q {E^1} = \bigwedge^q H^0(\C^{n+1}, {\Omega^1_{\C^{n+1}}}) \to E^q := H^0(\C^{n+1}, {\Omega^q_{\C^{n+1}}})$ is an isomorphism
of graded modules, and  hence we may replace $\bigwedge^q \tilde{E^1} = \tilde  {E^q}$ in \ref{euler2}.

\
 
\noindent
Because the mentioned basis elements of $E^1$ (resp. of $E^q$) are homogeneous of degree one (resp. of degree $q$), we have natural isomorphisms of graded $A$-modules
 ${E^1}  =  A(-1)^{n+1}$ and ${E^q}  =  \bigwedge^q (\C^{n+1}) \otimes_{\C} A(-q) = A(-q)^{\binom{n+1}{q}}$; then we may replace
 $ \tilde{E^1}  =  \OO_{\PP^n}(-1)^{n+1}$ in \ref{euler}, and 
 $\bigwedge^q \tilde{E^1} =  \OO_{\PP^n}(-q)^{\binom{n+1}{q}}$  in \ref{euler2}.
 
 \
 
\noindent
For each $r \in \N$, tensoring \ref{euler2} by $\OO_{\PP^n}(r)$ and taking global sections we obtain the exact sequence
\begin{equation}
\xymatrix{
\small
0 \ar[r] & H^0(\PP^n, \Omega^q_{\PP^n}(r))  \ar[r]^{\pi^*} & H^0(\C^{n+1}, {\Omega^q_{\C^{n+1}}})_r  \ar[r]^{\iota_R} &  H^0(\PP^n, \Omega^{q-1}_{\PP^n}(r)) \ar[r] & 0.
 }
\label{euler3} 
\end{equation}
For the surjectivity of  $\iota_R$ see \cite{okonek1980vector} p. 4.
 
 \

\begin {proposition} \label{vanishing}
Let $X = \PP^n(\C) = \PP^n$ and let $\JJ \subset \KK \subsetneq \Omega_X$ be integrable singular Pfaff ideals with $K^0 = 0$. 
Denote $S \subset \C^{n+1}$ the singular set of $\pi^* \KK$.  As in \ref{defC} define $C = \mathcal{H}omgr_{\Omega_X^{\sharp}}(\JJ, \KK/\JJ).$ 
If $\pi^*(\KK/\JJ)$ is reflexive then
$$H^1( H^0(X, C) , d) = 0.$$
\end {proposition}  

\begin{proof}
The proof will have some points in common with the proof of Proposition \ref{h1}.
The claim amounts to the exactness of the short complex of finite dimensional complex vector spaces:
\begin{equation}
\xymatrix{
\Hom_{\Omega_X^{\sharp}}(\JJ,G) \ar[r]^{d^{0}} & \Hom_{\Omega_X^{\sharp}}(\JJ,G[1])  \ar[r]^{d^1} & 
\Hom_{\Omega_X^{\sharp}}(\JJ,G[2]),
}
\label{short complex3} 
\end{equation}
where $G = \KK/\JJ$. By Remark \ref{complex O} it suffices to prove the exactness of the isomorphic complex
\begin{equation}
\xymatrix{
\Hom_{\OO_X}(\JJ^1,G^{1}) \ar[r]^{d_{\OO}^{0}} & \Hom_{\OO_X}(\JJ^1,G^{2})  \ar[r]^{d_{\OO}^1} &\Hom_{\OO_X}(\JJ^1,G^{3}).
 }
\label{short complex4} 
\end{equation}

\

\noindent
From Proposition \ref{cumbersome} on $\C^{n+1}$,  with the ideals
$\pi^* \JJ \subset \pi^* \KK \subset \Omega_{\C^{n+1}}$, we know that the following sequence of analytic sheaves is exact:
\begin{equation}
\xymatrix{
 \mathcal{H}om_{\OO_{\C^{n+1}}}(\pi^*\JJ^1, \pi^*G^1)^h  \ar[r]^{d_{\OO}^{0}} &  \mathcal{H}om_{\OO_{\C^{n+1}}}(\pi^*\JJ^1, \pi^*G^2)^h  \ar[r]^{d_{\OO}^1} &   
 \mathcal{H}om_{\OO_{\C^{n+1}}}(\pi^*\JJ^1, \pi^*G^3)^h.
 }
\label{short complex7} 
\end{equation}

\

\noindent
Let $\eta \in \Hom_{\OO_X}(\JJ^1,G^{2})$ be such that $d_{\OO}^1(\eta) = 0$. 
We want to show that 
\begin{equation}
\eta = d_{\OO}^0(\theta),
\label{boundary} 
\end{equation}
for some $\theta \in \Hom_{\OO_X}(\JJ^1,G^{1})$.

\

\noindent
Denote ${\eta}_0  \in \mathcal{H}om_{\OO_{\C^{n+1}}}(\pi^*\JJ^1, \pi^*G^2)^h_0$  the analytic germ at $0$ of $\pi^*{\eta}$. 
Taking germs at $0$ in \ref{short complex7}, there exists   ${\theta}_0 \in \mathcal{H}om_{\OO_{\C^{n+1}}}(\pi^*\JJ^1, \pi^*G^1)^h_0$
such that 
\begin{equation}
 {\eta}_0 = d_{\OO}^{0}({\theta}_0).
\label{boundary2} 
\end{equation} 

\

\noindent
Consider the Taylor expansion of ${\theta}_0$ around the origin. It may be written as
\begin{equation}
{\theta}_0 = \sum_{r \ge 0} {\theta}_0(r),
\label{theta}
\end{equation}
where ${\theta}_0(r) \in \Hom_{\OO_{\C^{n+1}}}(\pi^*\JJ^1, \pi^* G^1)_r =  \pi^* \Hom_{\OO_{X}}(\JJ^1, G^1(r))$ is a   global section, 
 homogeneous of degree $r$, for $r \ge 0$. We denote 
 \begin{equation}
{\theta}_0(r) = \pi^* \theta(r),
\label{theta2}
\end{equation}
for $\theta(r) \in \Hom_{\OO_{X}}(\JJ^1, G^1(r)).$

\

\noindent
To see \ref{theta},  let us take a closer look into    $\mathcal{H}om_{\OO_{\C^{n+1}}}(\pi^*\JJ^1, \pi^*G^1)^h_0$.
First, if $M$ is an $A$-module, consider $\bar M^h_0$, the germs at $0$ of the analytification of the Zariski sheaf $\bar M$.
By its definition, see \cite{serre1956geometrie}, a typical element $\mu  \in \bar M^h_0$ is represented by a finite sum 
$\mu = \sum_{1 \le i \le N} \frac {a_i}{f_i} \ m_i$
where the $a_i$ are holomorphic complex valued functions defined around $0$, $f_i \in A$ are such that $f_i(0) \ne 0$, and $m_i \in M$.
Let us write the Taylor expansion  $\frac {a_i}{f_i} = \sum_{r=0}^{\infty} b_i(r)$, where $b_i(r) \in A_r$ is a homogeneous polynomial of degree $r$.
We obtain $\mu = \sum_{1 \le i \le N}  \sum_{r=0}^{\infty} b_i(r) \ m_i =  \sum_{r=0}^{\infty} \mu(r)$ with $\mu(r) = \sum_{1 \le i \le N}  b_i(r) m_i$.
If $M$ is a graded $A$-module we may assume that each $m_i$ is homogeneous. Then each $b_i(r) m_i \in M$ is homogeneous, and
collecting homogeneous terms of the same degree, and changing the notation, we obtain
$\mu =   \sum_{r=0}^{\infty} \mu(r)$ with $\mu(r) \in M_r$. Applying this to the graded $A$-module $H$ as in \ref{graded3}, we obtain \ref{theta}.

\

\

\noindent
Applying $d_{\OO}^{0}$ in \ref{theta} we obtain
\begin{equation}
 {\eta}_0 = \pi^* \eta = d_{\OO}^{0}({\theta}_0)  = \sum_{r \ge 0} d_{\OO}^{0}({\theta}_0(r)).
\label{igualdad}
\end{equation}

\noindent
By \ref{graded2}, $\pi^* \eta$ has degree zero. 

\

\noindent
On the other hand, $d_{\OO}^{0}$  is homogeneous of degree zero. This follows easily from Remark \ref{degree} and Example \ref{formula d0}, \ref{differential4}.
Another way to see it: in \ref{differential3}, since de De Rham differential in $\C^{n+1}$ is homogeneous of degree zero,
$d_G$ and $d_{\II}$ are homogeneous of degree zero. 
Hence, if $\varphi$ is homogeneous of degree $r$ then, for any $k \ge 0$, $d_{\OO}^k(\varphi^1)$ is also homogeneous of the same degree $r$.
Therefore $d_{\OO}^{0}({\theta}_0(r))$ is homogeneous of degree $r$.

\

\noindent
Looking at the homogeneous component of degree zero in \ref{igualdad} we get
$\pi^* \eta  = d_{\OO}^{0}({\theta}_0(0)).$
Then, from \ref{theta2} we obtain $\eta = d_{\OO}^{0}({\theta}(0))$, which proves \ref{boundary}.

\

\noindent
Let us remark that w e showed that the differential in the complex  \ref{short complex7}  preserves degree.
Hence the exactness of   \ref{short complex7}  implies the exactness of its homogeneous component of degree zero,
which is \ref{short complex4}. 
\end{proof}

\

\subsection{Second theorem on stability.}  

\

\

\noindent
Summarizing, we obtain:

\

\begin{theorem}[B] \label{theorem pfaff2}
Denote $X = \PP^n(\C)$ and $\pi: \C^{n+1}-\{0\} \to X$  the canonical projection.
Fix $q$ such that $2 \le q \le n$ and for $j= 1, \dots, q$  let $\mathcal{F}_j$ be an irreducible component of $\Hilb(\Omega_X)$. For $\II_j \in \mathcal{F}_j$
denote $\II = \sum_{j=1}^q \II_j$, and $S = S(\II)$. We assume that for all $j$ and general $\II_j \in \mathcal{F}_j$
the following conditions are satisfied:

\
 
\noindent
1) $\II_j$ is a saturated singular Pfaff ideal, 

\noindent
2) $\II$ is saturated,

\noindent
3) $\Hilb(\Omega_X)$ is reduced at a general point of  $\mathcal{F}_j,$

\noindent
4) $\mathcal{E}xt^1_{O_X}(\II_j^r, \II^r/\II_j^r) = 0$ for all $r \ge 1$,

\noindent
5) $\II/\II_j$ is reflexive, 

\noindent
6)  $\pi^*(\II/\II_j)$ is reflexive at $0$,   and

\noindent
7) $H^1(X, \mathcal{H}om_{\OO_X}(\II_j^1, \II^1/\II_j^1)) = 0.$

\

\noindent
Then the derivative of $\mu_{\tilde \II}$  (\ref{sigma}) at a general point is surjective. 
Therefore, $\langle\mathcal{F}_1, \dots, \mathcal{F}_q\rangle$ is an irreducible component of $\Hilb(\Omega_X)$. Also, $\Hilb(\Omega_X)$ is reduced at a general point of $\langle\mathcal{F}_1, \dots, \mathcal{F}_q\rangle$.
\end{theorem}
\begin{proof} 
We only need to verify the validity of hypotheses a) and b) of Theorem (A)  \ref{theorem pfaff}. 

\noindent
a) is our current 3). 

\noindent
As we saw in Remarks \ref{remarkpfaff} and \ref{strategy}, b) follows from 
$$
H^0(X,\mathcal{E}xt_{\Omega_X}^{1}({\II_j},{\II/\II_j})^h) = 0  \ \ \text{and} \ \ H^1(X,\mathcal{H}om_{\Omega_X}({\II_j},{\II/\II_j})^h) = 0, \ \forall j.
$$

\noindent
From 4), 5) and Proposition  \ref{cumbersome} we obtain the first condition. 
The second condition follows from \ref{b} and Proposition \ref{vanishing}, because of 5) and 6).
Notice that we are using \ref{hyper4}, which is true due to 7).   
Therefore the proof is complete.
\end{proof}

\medskip

\newpage

\section {Construction of Examples.}  \label{examples}

\

\noindent
In order to give examples of irreducible components of type $\langle\mathcal{F}_1, \dots, \mathcal{F}_q\rangle$,  
one needs to verify the validity of the hypotheses of Theorem (B)  \ref{theorem pfaff2}. 

\

\noindent
Let us make some comments about these hypotheses.

 \

\noindent
1) The components $\mathcal{F}_j$ are chosen so that their general members $\II_j$ are the saturation of singular Pfaff ideals.

\

\noindent
2) The fact that $\II$ is saturated for general $\II_j$'s does not seem automatic and should be verified separately in examples.

\

\noindent
3) The participating irreducible components $\mathcal{F}_j$ of $\Hilb(\Omega_X)$ 
are chosen so that $\Hilb(\Omega_X)$ is reduced at a general point of  $\mathcal{F}_j$.
Several examples of such generically reduced components are known: foliations with split tangent sheaf \cite{cukierman2008stability}, rational foliations of codimension $q$ \cite{cukierman2009stability}, logarithmic foliations of codimension one \cite{cukierman2019stability}, 
some foliations of codimension one and degree three \cite{costa2022codimension}, certain pull-back foliations \cite{gargiulo}, and some $S$-logarithmic foliations \cite{Chehebar}.

\

\noindent
4) To prove the required vanishing of $\mathcal{E}xt^1_{O_X}(\II_j^r, \II^r/\II_j^r)$ one option is to use Theorem \ref{ext=0};
see Corollary \ref{cumbersome2} and Remark \ref{hypothesis}. This reduces the issue to checking certain $\depth$ conditions, as in Corollary \ref{cor ext=0}.
More precisely, for each $r$ it would be enough to show that
$$\depth_S \II^r/\II_j^r \ge 2  \  \text{and} \  \depth_S \mathcal{H}om_{\OO_X}(\II_j^r, \II^r/\II_j^r) \ge 3, \ \forall j.$$
Calculating these $\depth$s seems non-trivial, and we plan to carry out some such calculations in a separate article.

\

\noindent
5)  For each $r \in \N$, by Corollary \ref{depth} we know that $\II^r/\II^r_j$ is reflexive 
if $\Omega^r_X/\II^r$ is torsion-free (which is hypothesis 2) again) and $\Omega^r_X/\II^r_j$ is reflexive.
As in 4), one needs to prove that $\depth_S \Omega^r_X/\II^r_j \ge 2$.

\

\noindent
6) Similarly to 5), for each degree $r$, this hypothesis follows from the condition that  $\pi^*(\Omega^r_X/\II^r_j)$ is reflexive at $0$.

\

\noindent
7) This seems reasonably easy to analyze in specific situations. For instance, if $X$ is a projective space, each $\II_k^1$ is graded-free, and 
the sum  $\II^1 = \sum_k \II_k^1$ is a direct sum, then $\II^1/\II_j^1 = \bigoplus_{k \ne j} \II^1_k$ and hence
$H^1(X, \mathcal{H}om_{\OO_X}(\II_j^1, \II^1/\II_j^1)) =  \bigoplus_{k \ne j} H^1(X, \II^1_k \otimes (\II^1_j)^*) = 0.$

\newpage

\bibliographystyle{plain}
\bibliography{citas}

\begin{thebibliography}{10}

\bibitem{MR1863391}
M.~Artin and J.~J. Zhang.
\newblock Abstract {H}ilbert schemes.
\newblock {\em Algebras and Representation Theory}, 4(4):305--394, 2001.

\bibitem{berthier1993quelques}
M.~Berthier and D.~Cerveau.
\newblock Quelques calculs de cohomologie relative.
\newblock In {\em Annales scientifiques de l'Ecole normale sup{\'e}rieure},
  volume~26, pages 403--424, 1993.

\bibitem{bourbaki2007algebre}
N.~Bourbaki.
\newblock {\em Alg{\`e}bre commutative: Chapitres 5 {\`a} 7}.
\newblock Springer Science \& Business Media, 2007.

\bibitem{bourbakihomologicalalgebra}
N.~Bourbaki.
\newblock Elements of mathematics. {A}lgebra. {C}hapter 10.
\newblock 2007.

\bibitem{bourbakialgebra}
N.~Bourbaki.
\newblock Elements of mathematics. {A}lgebra {I}. {C}hapter 2.
\newblock 2007.

\bibitem{bourbaki2007elements}
N.~Bourbaki.
\newblock Elements of mathematics. {C}ommutative algebra. {C}hapter 10.
\newblock 2007.

\bibitem{MR1083148}
R.~L. Bryant, S.~S. Chern, R.~B. Gardner, H.~L. Goldschmidt, and P.~A.
  Griffiths.
\newblock {\em Exterior differential systems}, volume~18 of {\em Mathematical
  Sciences Research Institute Publications}.
\newblock Springer-Verlag, New York, 1991.

\bibitem{bryant1995characteristic}
R.~L. Bryant and P.~A. Griffiths.
\newblock Characteristic cohomology of differential systems: General theory.
\newblock {\em Journal of the American Mathematical Society}, 8(3):507--596,
  1995.

\bibitem{Chehebar}
M.~Chehebar.
\newblock Thesis.
\newblock {\em Componentes S-logaritmicas del espacio de moduli de foliaciones,
  \url{https://web.dm.uba.ar/index.php/docencia/tesis-de-doctorado}}, 2023.

\bibitem{cukierman2019stability}
F.~Cukierman, J.~Gargiulo~Acea, and C.~Massri.
\newblock Stability of logarithmic differential one-forms.
\newblock {\em Transactions of the American Mathematical Society}, 2019.

\bibitem{cukierman2008stability}
F.~Cukierman and J.~V. Pereira.
\newblock Stability of holomorphic foliations with split tangent sheaf.
\newblock {\em American Journal of Mathematics}, 130(2):413--439, 2008.

\bibitem{cukierman2009stability}
F.~Cukierman, J.~V. Pereira, and I.~Vainsencher.
\newblock Stability of foliations induced by rational maps.
\newblock In {\em Annales de la Facult{\'e} des sciences de Toulouse:
  Math{\'e}matiques}, volume~18, pages 685--715, 2009.

\bibitem{costa2022codimension}
R.~C. da~Costa, R.~Lizarbe, and J.~V. Pereira.
\newblock Codimension one foliations of degree three on projective spaces.
\newblock {\em Bulletin des Sciences Math{\'e}matiques}, 174:103092, 2022.

\bibitem{deligne1969equation}
P.~Deligne.
\newblock Equations diff{\'e}rentielles a points singulieres r{\'e}gulieres.
\newblock {\em Lecture Notes in Mathematics}, 163, 1969.

\bibitem{MR1322960}
D.~Eisenbud.
\newblock {\em Commutative algebra}.
\newblock Graduate Texts in Mathematics. Springer-Verlag, 1995.

\bibitem{MR2222646}
B.~Fantechi, L.~G{\"o}ttsche, L.~Illusie, S.~L. Kleiman, N.~Nitsure, and
  A.~Vistoli.
\newblock {\em Fundamental algebraic geometry}.
\newblock Mathematical Surveys and Monographs. American Mathematical Society,
  Providence, RI, 2005.

\bibitem{MR2320462}
L.~Fiorot.
\newblock On derived categories of differential complexes.
\newblock {\em J. Algebra}, 312(1):362--376, 2007.

\bibitem{gargiulo}
J.~Gargiulo~Acea, A.~Molinuevo, F.~Quallbrunn, and S.~Velazquez.
\newblock Stability of pullbacks of foliations on weighted projective spaces.
\newblock {\em \url{https://arxiv.org/abs/2212.12974}}.

\bibitem{MR2394437}
I.~M. Gelfand, M.~M. Kapranov, and A.~V. Zelevinsky.
\newblock {\em Discriminants, resultants and multidimensional determinants}.
\newblock Modern Birkh\"auser Classics. Birkh\"auser, 2008.

\bibitem{gomez1988transverse}
X.~Gomez-Mont.
\newblock The transverse dynamics of a holomorphic flow.
\newblock {\em Annals of Mathematics}, 127(1):49--92, 1988.

\bibitem{griffiths2014principles}
P.~Griffiths and J.~Harris.
\newblock {\em Principles of algebraic geometry}.
\newblock John Wiley \& Sons, 2014.

\bibitem{grothendieck1956faisceaux}
A~Grothendieck.
\newblock Sur les faisceaux alg{\'e}briques et les faisceaux analytiques
  coh{\'e}rents.
\newblock {\em S{\'e}minaire Henri Cartan}, 9:1--16, 1956.

\bibitem{MR0102537}
A.~Grothendieck.
\newblock Sur quelques points d'alg\`ebre homologique.
\newblock {\em T\^ohoku Math. J. (2)}, 9:119--221, 1957.

\bibitem{MR0217083}
A.~Grothendieck.
\newblock El\'ements de g\'eom\'etrie alg\'ebrique {I}. {L}e langage des
  sch\'emas.
\newblock {\em Inst. Hautes \'Etudes Sci. Publ. Math.}, (4):228, 1960.

\bibitem{MR0217085}
A.~Grothendieck.
\newblock \'{E}l\'ements de g\'eom\'etrie alg\'ebrique {III}. \'{E}tude
  cohomologique des faisceaux coh\'erents. {I}.
\newblock {\em Inst. Hautes \'Etudes Sci. Publ. Math.}, (11):167, 1961.

\bibitem{grothendieck2005cohomologie}
A.~Grothendieck and M.~Raynaud.
\newblock Cohomologie locale des faisceaux coh\'erents et th\'eoremes de
  {L}efschetz locaux et globaux ({SGA 2}).
\newblock {\em arXiv math/0511279}, 2005.

\bibitem{MR0224620}
R.~Hartshorne.
\newblock {\em Local cohomology}.
\newblock Springer-Verlag, 1967.

\bibitem{MR0463157}
R.~Hartshorne.
\newblock {\em Algebraic geometry}.
\newblock Springer-Verlag, 1977.

\bibitem{MR597077}
R.~Hartshorne.
\newblock Stable reflexive sheaves.
\newblock {\em Math. Ann.}, 254(2):121--176, 1980.

\bibitem{MR2583634}
R.~Hartshorne.
\newblock {\em Deformation theory}.
\newblock Graduate Texts in Mathematics. Springer, 2010.

\bibitem{MR0310287}
M.~Herrera and D.~Lieberman.
\newblock Duality and the de {R}ham cohomology of infinitesimal neighborhoods.
\newblock {\em Invent. Math.}, 13:97--124, 1971.

\bibitem{MR0346025}
P.~Hilton and U.~Stammbach.
\newblock {\em A course in homological algebra}.
\newblock Springer-Verlag, 1971.

\bibitem{hirzebruch1966topological}
F.~Hirzebruch, A.~Borel, and R.~Schwarzenberger.
\newblock {\em Topological methods in algebraic geometry}, volume 175.
\newblock Springer Berlin-Heidelberg-New York, 1966.

\bibitem{jinnah1975reflexive}
M.~I. Jinnah.
\newblock Reflexive modules over regular local rings.
\newblock {\em Archiv der Mathematik}, 26:367--371, 1975.

\bibitem{jothilingam2016projectivity}
P.~Jothilingam and T.~Duraivel.
\newblock Projectivity of reflexive modules over regular rings.
\newblock {\em Journal of Algebra and Its Applications}, 15(2), 2016.

\bibitem{jouanolou2006equations}
J.-P. Jouanolou.
\newblock {\em Equations de Pfaff alg{\'e}briques}, volume 708.
\newblock Springer, 2006.

\bibitem{MR1181207}
M.~Kapranov.
\newblock On {DG}-modules over the de {R}ham complex and the vanishing cycles
  functor.
\newblock In {\em Algebraic geometry ({C}hicago, {IL}, 1989)}, volume 1479 of
  {\em Lecture Notes in Math.}, pages 57--86. Springer, 1991.

\bibitem{MR1258406}
B.~Keller.
\newblock Deriving {DG} categories.
\newblock {\em Ann. Sci. \'Ecole Norm. Sup. (4)}, 27(1):63--102, 1994.

\bibitem{kleiman1971geometry}
S.~L. Kleiman and J.~Landolfi.
\newblock Geometry and deformation of special schubert varieties.
\newblock {\em Compositio Mathematica}, 23(4):407--434, 1971.

\bibitem{lang2004algebra}
S.~Lang.
\newblock Algebra.
\newblock {\em Graduate Texts in Mathematics}, 2004.

\bibitem{MR0508170}
B.~Malgrange.
\newblock Frobenius avec singularit\'es {II}. {L}e cas g\'en\'eral.
\newblock {\em Invent. Math.}, 39(1):67--89, 1977.

\bibitem{matsumura1989commutative}
H.~Matsumura.
\newblock {\em Commutative ring theory}.
\newblock Number~8. Cambridge University Press, 1989.

\bibitem{mumford1966lectures}
D.~Mumford and G.~Bergman.
\newblock {\em Lectures on curves on an algebraic surface}.
\newblock Princeton University Press, 1966.

\bibitem{MR3640821}
S.~Nasseh and S.~Sather-Wagstaff.
\newblock Extension groups for {DG} modules.
\newblock {\em Comm. Algebra}, 45(10):4466--4476, 2017.

\bibitem{okonek1980vector}
C.~Okonek, M.~Schneider, H.~Spindler, and S.~Gelfand.
\newblock {\em Vector bundles on complex projective spaces}.
\newblock Springer, 1980.

\bibitem{MR3386225}
S.~Rybakov.
\newblock D{G}-modules over de {R}ham {DG}-algebra.
\newblock {\em Eur. J. Math.}, 1(1):25--53, 2015.

\bibitem{schlessingerthesis}
M.~Schlessinger.
\newblock Thesis.
\newblock {\em Infinitesimal Deformations of Singularities}, 1964.

\bibitem{sernesi2007deformations}
E.~Sernesi.
\newblock {\em Deformations of algebraic schemes}.
\newblock Springer, 2007.

\bibitem{serre1956geometrie}
J.-P. Serre.
\newblock G{\'e}om{\'e}trie alg{\'e}brique et g{\'e}om{\'e}trie analytique.
\newblock In {\em Annales de l'institut Fourier}, volume~6, pages 1--42, 1956.

\bibitem{stacks-project}
The {Stacks project authors}.
\newblock The stacks project.
\newblock \url{https://stacks.math.columbia.edu}, 2025.

\bibitem{toen2017problemes}
B.~To{\"e}n.
\newblock Problemes de modules formels.
\newblock {\em Seminaire Bourbaki}, 2017.

\end{thebibliography}

\newpage

\begin{flushleft}
\begin{small}

\noindent
 Fernando Cukierman, \newline
 Universidad de Buenos Aires and CONICET, \newline
 fcukier{@}dm.uba.ar
 
 \medskip

\noindent
C\'esar Massri, \newline
Universidad de Buenos Aires and CONICET, \newline
cmassri{@}dm.uba.ar

\medskip
Address: \newline
 Departamento de Matem\'{a}tica, FCEN. \newline
 Ciudad Universitaria.\newline
 (1428) Buenos Aires.\newline
 ARGENTINA.

\end{small}
\end{flushleft}

\end{document}